\documentclass{article}
\usepackage{graphicx} 
\usepackage{amsfonts, mathrsfs, amsmath, amsthm}
\usepackage{enumitem}
\usepackage{appendix}
\usepackage[section]{algorithm}
\usepackage{algpseudocode}
\usepackage{mathtools}
\usepackage{diagbox}

\usepackage{todonotes}

\usepackage[parfill]{parskip} 
\setlength\parindent{0pt}
\usepackage{subcaption}
\usepackage{svg}
\usepackage{xcolor}
\setlength{\textwidth}{16cm}
\oddsidemargin=0.truecm
\evensidemargin=0.truecm
\pagestyle{plain}
\setlength{\topmargin}{-1.5cm}
\setlength{\textheight}{24cm}

\newcommand\numberthis{\addtocounter{equation}{1}\tag{\theequation}}

\newtheorem{theorem}{Theorem}[section]
\newtheorem{lemma}[theorem]{Lemma}
\newtheorem{corollary}{Corollary}[section]

\newtheorem{assumption}{Assumption}[section]
\theoremstyle{definition}

\newtheorem{remark}{Remark}[section]

\newcommand{\C}{\Sigma}
\newcommand{\M}{\mathcal{M}}
\newcommand{\cE}{\mathcal{E}}

\newcommand{\R}{\mathbb{R}}
\newcommand{\E}{\mathbb{E}}
\newcommand{\diag}{\text{Diag}}

\newcommand{\la}{\langle}
\newcommand{\ra}{\rangle}
\newcommand{\trace}{\text{trace}}
\newcommand{\ls}{\text{LS}}

\usepackage[normalem]{ulem}

\newcommand*\de{\mathop{}\!\mathrm{d}}

\newcommand*\h{\mathop{}\!\mathrm{H}}

\newcommand*\samethanks[1][\value{footnote}]{\footnotemark[#1]}

\numberwithin{equation}{section}
\title{Kalman-Langevin dynamics : exponential convergence, particle approximation and numerical approximation}
\author{%
Axel Ringh\thanks{%
Department of Mathematical Sciences,
Chalmers University of Technology and University of Gothenburg. \texttt{axelri@chalmers.se}, \texttt{akashs@chalmers.se}.}
\and
Akash Sharma\samethanks
}
\date{}

\begin{document}

\maketitle
\begin{abstract}
Langevin dynamics has found a large number of applications in sampling, optimization and estimation. Preconditioning the gradient in the dynamics with the covariance --- an idea that originated in literature related to solving estimation and inverse problems using Kalman techniques --- results in a mean-field (McKean-Vlasov) SDE.
We demonstrate exponential convergence of the time marginal law of the mean-field SDE to the Gibbs measure with
non-Gaussian potentials. This extends previous results, obtained in the Gaussian setting, to a broader class of potential functions. 
We also establish uniform in time bounds on all moments and convergence in $p$-Wasserstein distance. Furthermore, we show convergence of a weak particle approximation, that avoids computing the square root of the empirical covariance matrix, to the mean-field limit. Finally, we prove that an explicit numerical scheme for approximating the particle dynamics converges, uniformly in number of particles, to its continuous-time limit, addressing non-global Lipschitzness in the measure.

   \medskip
       
       \noindent {\bf Keywords: } McKean-Vlasov stochastic differential equations, interacting particle systems, strongly convergent numerical schemes. 
   
   \medskip

     \noindent {\bf AMS Classification: }  65C30, 60H35, 60H10, 37H10, 35Q84. 
\end{abstract}

\section{Introduction}  

\begin{figure}
    \centering
    \includegraphics[width=1\textwidth, height=0.3\textheight]{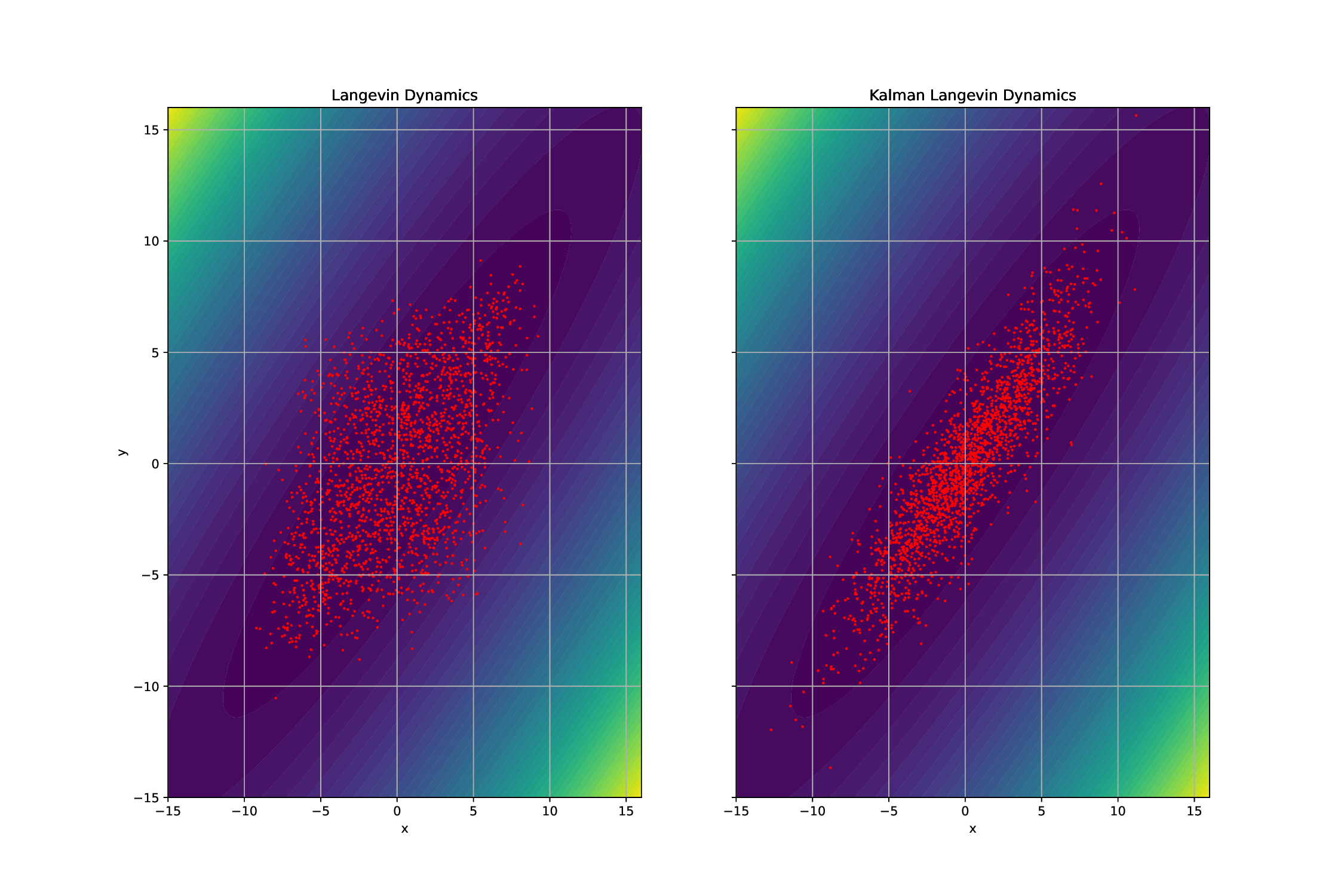}
    \caption{The difference in behavior of overdamped Langevin dynamics and Kalman-Langevin dynamics for potential function $U = 0.26 (x^2 + y^2) - 0.48 x y
$ at time $T = 1$ with $2000$ particles uniformly initialized in $[-15, 15]^2$ with $\beta = 1$.}
    \label{fig:introduction}
\end{figure}

Sampling and optimization techniques are, in many cases, the main ingredients to solutions of problems in applied mathematics and computational statistics. 
For instance, in molecular dynamics, sampling techniques focus on exploring the potential configurations of a molecule, while optimization comes into play when seeking the configuration with the minimum energy. Additionally, many optimization problems can be formulated as sampling problems within a Bayesian framework to account for uncertainty. 
In sampling, one looks for samples from the measure defined by 
\begin{align}\label{Gibb_measure}
    \mu(\de x) = \frac{1}{Z}e^{-\beta U(x)}\de x,
\end{align}
where $Z = \int_{\R^{d}} e^{-\beta U(x)}\de x$ is the normalizing constant, $\beta >0$ and $U : \mathbb{R}^d \rightarrow \R$. Connecting this to optimization, under relatively mild conditions on $U$ and for increasing $\beta$, the measure concentrates around global minima of $U$ \cite{hwang1980laplace, hasenpflug2024wasserstein}. 

For sampling purposes, in addition to traditional Markov chain Monte Carlo methods, overdamped Langevin dynamics, which finds its origin in statistical physics (see, e.g., \cite{rossky1978brownian}), is one popular choice of method. The overdamped Langevin dynamics (also known as Smoluchowski dynamics) driven by Brownian noise $W(t)$ is given by 
\begin{align}
     \de X(t) = -\nabla U(X(t)) \de t +  \sqrt{\frac{2}{\beta}} \de W(t), \quad X(0) \in \R^d,\label{eq:langevin}
\end{align}
and it leaves the Gibbs measure (\ref{Gibb_measure}) invariant under appropriate conditions on $U$.  
The dynamics in \eqref{eq:langevin} has also been used as an optimization method exploiting annealing techniques with a $\beta \to \infty$ (see \cite{chiang1987diffusion}), and as a starting point for developing new sampling methods.

On the other hand, the Kalman filter introduced in \cite{evensen1994sequential_kalman_filter} for state estimation has also found large number of applications in applied sciences (for example in oceanography \cite{evensen1996assimilation}, in reservoir modeling \cite{aanonsen2009ensemble}, and in weather forecasting \cite{houtekamer2001sequential}) for data assimilation and inverse problems. Inspired from the Kalman filter, an ensemble Kalman iterative procedure for solving inverse problems, called as ensemble Kalman inversion (EKI), was proposed in \cite{iglesias2013ensemble}, and the corresponding continuous-time limit, which is an interacting system of SDEs, was derived in \cite{schillings2017analysis_enkf}. The authors in \cite{blomker2019well} established well-posedness and convergence to the ground truth of the SDEs underlying EKI. The convergence to the mean-field limit was studied in \cite{ding2021ensemble_inversion} for the EKI model.  For the reflected EKI model, the well-posedness and convergence of particle system to mean-field limit in non-convex domain setting is established in \cite{hinds2024well}.
In this context, it is also worth noting other recent works on optimization and sampling using interacting particle systems demonstrating better capabilities to deal with the non-convexity and anisotropy of energy landscapes, with possible derivative free implementation, such as \cite{carrillo2018analytical, leimkuhler2018ensemble, Kovachki_stuart_2019, chada2020tikhonov, lindsey_weare2022ensemble_teleporting, molin2025controlled}.

Inspired from the
continuous-time limit of ensemble Kalman procedure with noise,  \cite{garbunoinigo2019interacting} proposed a covariance preconditioned overdamped Langevin dynamics which is a non-linear (in the sense of McKean) Markov process. This covariance preconditioned Langevin dynamics is driven by the following McKean-Vlasov SDEs:
\begin{subequations}\label{eq:ek_sdes}
\begin{align} \label{el_sde_mf}
     \de X(t) = -\Sigma(\mu_{t})\nabla U(X(t)) \de t +  \sqrt{\frac{2\Sigma(\mu_{t})}{\beta}} \de W(t),
\end{align}
with
\begin{align}
    \C(\mu_t)&= \int_{\R^{d}}(x - \M(\mu_t))(x - \M(\mu_t))^{\top} \de \mu_{t}(x), \\
     \M(\mu_t) &= \int_{\R^{d}}x \de \mu_{t}(x), 
\end{align}
\end{subequations}
where $W(t)$ is a $d$-dimensional Brownian motion and $\mu_{t} := \mathcal{L}^{X}_{t}$ is the time marginal law of $X$. This provides a new sampling method which portrays better performance in capturing  anisotropic energy landscapes, as illustrated in Figure~\ref{fig:introduction}. Furthermore, Kalman approximation of the gradient (see Remark~\ref{remark_3.1_deri_free}) provides derivative free technique for Laplace approximation of the targeted distribution. In \cite{garbunoinigo2019interacting}, provided that the initial distribution is not a Dirac distribution and that $U$ is a quadratic function of $x$, the authors showed the convergence of the law of (\ref{el_sde_mf}) to the Gibbs measure in relative entropy. 

In  \cite{carrillo_vaes_2021_cov_pre_fkp}, exponential convergence is obtained directly in Wasserstein distance for the linear setting, i.e., the setting when $U$ is quadratic and thus $\nabla U$ is linear.
One more step towards analysis is the convergence of interacting particle system to the mean field limit given by (\ref{eq:ek_sdes}), which is shown in 
\cite{ding2021ensemble_sampler} in the case of a quadratic potential. The propagation of chaos result is extended to a non-linear setting in \cite{vaes2024sharpchaos}, with optimal rate of convergence in terms of number of particles. 
This paper builds on the above, and the contributions of the paper are the following:
\begin{itemize}
    \item[(i)]  The first major contribution is that we establish exponential convergence of the law of the non-linear Langevin dynamics to the Gibbs measure for non-linear potential functions (in particular, for a quadratic potential with Lipschitz perturbation) in relative entropy (which also gives exponential convergence in $2-$Wasserstein distance). This significantly improves the linear setting results of \cite{garbunoinigo2019interacting,
carrillo_vaes_2021_cov_pre_fkp} as it covers a large class of Bayesian models with Gaussian priors. In addition, we also prove uniform in time $p$-th moment bound of the mean-field SDEs. Combining the results we straightforwardly have convergence in $p-$Wasserstein distance. One of the main ingredients of the analysis is matrix valued non-linear ordinary differential equations (ODEs). This approach bears resemblance to the approach based on Riccati type matrix valued ODEs which has been employed to obtain stability estimates in the case of ensemble Kalman(-Bucy) filters \cite{del_tugaut2018stability, del2023theoretical_kalman}. 
\item[(ii)] We prove the convergence of an interacting particle system to the mean-field SDEs (\ref{eq:ek_sdes}). In contrast to \cite{ding2021ensemble_sampler, vaes2024sharpchaos}, we study a weak particle approximation that avoids the need to compute the square root of the empirical covariance matrix. We refer to this as a weak approximation because it differs from the interacting particle systems proposed and studied in \cite{garbunoinigo2019interacting} and \cite{vaes2024sharpchaos} at the path level. To show this convergence, we follow the classical trilogy of arguments from \cite{sznitman1991topics} (also see \cite{graham_meleard_1996asymptotic}).
\item[(iii)]
The second major contribution is that we establish the uniform in $N$, where $N$ denotes number of particles, convergence of an implementable explicit numerical scheme, with fixed discretization time-step, to its continuous-time limit. In
\cite{blomker2018strongly}, the authors consider a one dimensional model SDE inspired from ensemble Kalman inversion and establish convergence of a numerical scheme to its continuous limit. For interacting particle system with non-global Lipschitzness in measure in $2-$Wasserstein metric, \cite{chen_goncalo_2024euler} propose a split-step scheme, however, the nonlinearity in terms of measure appear as a convolution which is not the case for \eqref{eq:ek_sdes}. 
\end{itemize}

The outline of the article is in line with the contributions mentioned above: in Section~\ref{sec:exp_conv}, we show the exponential convergence; in Section~\ref{sec:particle_approx}, we prove the propagation of chaos; and in Section~\ref{sec:time_disc}, we show the convergence of the numerical scheme.

\paragraph{Notation}
We will use the following notations in the paper. If $Q(t)$ is a time-varying symmetric positive semi-definite matrix, we denote $\lambda^{Q}_{\min}(t)$ and $\lambda^{Q}_{\max}(t)$ as smallest and largest eigenvalues, respectively, of $Q(t)$. For  square matrix $Q$, we denote its trace by $\trace(Q)$. If $O$ is a subset of $\R^d$ then $O^c$ represents its complement. For convenience, we denote $a \wedge b = \min(a,b)$ for $a, b \in \R$. We represent the space of probability measures on $\R^d$ as $\mathcal{P}(\R^d)$ and the subset of measures with bounded  $p$-moment  by $\mathcal{P}_p(\R^d)$. We denote with $C^{k}(\R^d)$ the space of $k$ times continuously differentiable functions, and with $C_{c}^{k}(\R^d)$ the corresponding functions with compact support. For a function $f : \mathbb{R}^d \rightarrow \mathbb{R}$, we denote its gradient vector and Hessian matrix by $\nabla f$ and $\nabla^2 f$, respectively. For $\mu \in \mathcal{P}(\R^d)$, we denote $\int_{\R^d} \phi(x) \mu(dx)$ as $\la \phi, \mu\ra $ provided that the integral is finite. Moreover, for $\mu, \nu \in \mathcal{P}(\R^d)$, by $\mu \ll \nu$ we mean that $\mu$ is absolutely continuous with respect to $\nu$. With $\la a\cdot b\ra$, we denote the scalar product between two vectors $a,b\in \R^d$.  For a $\R^{d \times m}$ matrix, we denote its Frobenius norm with $|\cdot|$, which reduces to the Euclidean norm for $m =1$.

We will use $\pi(x)$ to denote the density, with respect to the Lebesgue measure, of the Gibbs measure \eqref{Gibb_measure}, i.e., 
\begin{align}\label{eq:pi}
    \pi(x) =  \frac{1}{Z} e^{-\beta U(x)},
\end{align}
where $Z = \int_{\mathbb{R}^d}e^{- \beta U(x)} \de x$. Finally, we denote with $C$ a generic constant whose value may change from line to line.


\section{Exponential convergence of non-linear dynamics to equilibrium}\label{sec:exp_conv}
In this section, we prove exponential convergence of the non-linear Markov process driven by \eqref{eq:ek_sdes} to its invariant distribution (\ref{Gibb_measure}) in relative entropy. 
We do this by first showing a uniform in time lower bound on the smallest eigenvalue of $\C_{t}$, i.e., on $\lambda_{\min}^{\C}(t)$.
These results are all proved under the following assumption on the function $U$.

\begin{assumption} \label{first_assum_on_U}
    We assume that the potential function $U$ has the following form:
    \begin{align}
    U = \frac{1}{2} x^{\top} A x + V,
    \end{align}
    where $A $ is positive definite matrix with smallest eigenvalue  $\ell_A$ and largest eigenvalue $L_A$, and $V$ is $C^{1}(\R^{d})$ and Lipschitz continuous function with Lipchitz constant $L_V$.
\end{assumption}

\begin{remark}
Note that the above assumption allows for non-convexity. One example satisfying the above assumption is that of Rastrigin function, which is commonly used as both a benchmark objective function for optimization and as a test potential function in sampling problems.    
\end{remark}

We first recall the definition of relative entropy along with its relation to total variation distance and $2$-Wasserstein distance; for an in-depth treatment one can look at \cite{villani2003topics}.
For any two probability measures $\nu$ and $\mu$ on $(\R^d, \mathcal{B}(\R^d))$, the relative entropy of $\nu$ with respect to $\mu$ is defined as
\begin{equation*}
    \h(\nu \;| \;\mu) = 
\begin{cases}
  &  \int_{\R^d}  \ln \bigg(\frac{ \de \nu}{\de \mu}\bigg) \de \nu \quad\text{if} \quad \nu \ll \mu, 
  \\ 
  & +\infty \quad \text{else}.
\end{cases}
\end{equation*}
The $p$-Wasserstein distance on $\mathcal{P}_{p}(\R^d)$ is given by
\begin{align*}
    \mathcal{W}_{p}( \nu, \mu ) := \inf_{\gamma \in \Gamma(\nu, \mu)}\bigg\{ \big(\E | Y_{1} - Y_{2}|^{p}\big)^{1/p}, \; \text{Law}(Y_1, Y_2) = \gamma \bigg\}, 
\end{align*}
where infimum is taken over all couplings of $\nu$ and $\mu$, i.e., $\Gamma(\nu, \mu)$ is the set of all joint measures such that $\text{Law}(Y_1) = \nu$ and $\text{Law}(Y_2) = \mu$. 
We say that $\mu$ satisfies a log-Sobolev inequality with constant $\lambda_{\ls}$ if for all probability measures $\nu$ such that $\nu \ll \mu$ the following holds:
\begin{align}
    H(\nu \;|\;\mu) \leq \frac{1}{2\lambda_{\ls}} \int_{\mathbb{R}^d}\bigg| \nabla \ln \frac{\de \nu}{\de \mu}\bigg|^2\de \nu.
\end{align}

Next, we recall one inequality related to relative entropy. These inequalities implies convergence in total variation norm and $2-$Wasserstein distance if convergence in relative entropy is proven. 
If $\mu$ satisfies a log-Sobolev inequality
with constant $\lambda_{\ls}$, then, due to Talagrand's inequality, we have
\begin{align*}
    \mathcal{W}_{2}( \nu, \mu ) \leq \sqrt{\frac{2}{\lambda_{\ls}}  \h(\nu \;| \;\mu) }.
\end{align*}

The main contributions of this section are the following results.

\begin{theorem}\label{main_thrm_exp_conv}
    Let Assumption~\ref{first_assum_on_U} hold. Then the non-linear Langevin SDEs (\ref{eq:ek_sdes}) converges to its invariant measure exponentially in relative entropy, i.e., there exist positive constants $c_1$ and $c_2$ independent of $t$ such that
    \begin{align*}
        \h(\mathcal{L}^{X}_{t} \;|\; \pi) \leq c_1 e^{-c_2 t}.
    \end{align*}
\end{theorem}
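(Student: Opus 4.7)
The plan is to derive an entropy dissipation inequality of the form $\frac{d}{dt}\h(\mathcal{L}^X_t \mid \pi) \le -\kappa\,\h(\mathcal{L}^X_t \mid \pi)$ with some $\kappa>0$ independent of $t$, and then conclude by Grönwall's lemma. Three ingredients are required: (i) an entropy dissipation identity adapted to the covariance preconditioning; (ii) a uniform-in-time strict positivity of $\lambda_{\min}^{\C}(t)$, as announced in the preamble of this section; and (iii) a log-Sobolev inequality for the invariant measure $\pi$.

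For (i), the Fokker-Planck equation associated with \eqref{eq:ek_sdes} rewrites, using $\nabla\ln\pi=-\beta\nabla U$ and the fact that $\C(\mu_t)$ is spatially constant, as
\begin{equation*}
\partial_t \mu_t \;=\; \frac{1}{\beta}\,\nabla\!\cdot\!\big(\C(\mu_t)\,\mu_t\,\nabla\ln(\mu_t/\pi)\big).
\end{equation*}
Differentiating $\h(\mu_t\mid\pi)$ along this equation and integrating by parts gives
\begin{equation*}
\frac{d}{dt}\h(\mu_t\mid\pi) \;=\; -\frac{1}{\beta}\int \la \nabla\ln(\mu_t/\pi)\cdot \C(\mu_t)\nabla\ln(\mu_t/\pi)\ra\, d\mu_t \;\le\; -\frac{\lambda_{\min}^{\C}(t)}{\beta}\,I(\mu_t\mid\pi),
\end{equation*}
where $I(\mu\mid\pi)=\int|\nabla\ln(\mu/\pi)|^2 d\mu$ is the relative Fisher information; because the preconditioner enters the Dirichlet form, only the smallest eigenvalue of $\C$ matters.

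Step (ii) is the main obstacle. Applying It\^o's formula to $(X-\M(\mu_t))(X-\M(\mu_t))^\top$ and taking expectations yields a matrix-valued ODE
\begin{equation*}
\dot{\C}(\mu_t) \;=\; -2\,\C(\mu_t) A \C(\mu_t) + \frac{2}{\beta}\C(\mu_t) + \mathcal{R}_V(\mu_t),
\end{equation*}
where $\mathcal{R}_V$ is a symmetric perturbation coming from $\nabla V$, bounded by $|\mathcal{R}_V(\mu_t)|\le C L_V |\C(\mu_t)|\sqrt{\trace \C(\mu_t)}$ thanks to Lipschitzness of $V$. Passing to the inverse gives the almost-linear matrix equation
\begin{equation*}
\frac{d}{dt}\C(\mu_t)^{-1} \;=\; 2A - \frac{2}{\beta}\C(\mu_t)^{-1} - \C(\mu_t)^{-1}\mathcal{R}_V(\mu_t)\,\C(\mu_t)^{-1},
\end{equation*}
which in the unperturbed Gaussian case $(V\equiv 0)$ converges explicitly to $\beta A$ and immediately yields $\lambda_{\min}^{\C}(t)\ge 1/\max(\lambda_{\max}^{\C^{-1}}(0),\beta L_A)$. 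For the perturbed case I would combine a uniform-in-time bound on $\E|X(t)|^2$, obtained separately via a Lyapunov estimate exploiting the dissipation from $A$ to dominate the Lipschitz drift through Young's inequality, with the Riccati-type ODE above to derive a scalar differential inequality for $\lambda_{\max}^{\C^{-1}}(t)$ whose right-hand side is negative once $\lambda_{\max}^{\C^{-1}}(t)$ exceeds a threshold depending only on $\ell_A,L_A,L_V,\beta$ and the moment bound. A comparison argument then produces a uniform upper bound on $\lambda_{\max}^{\C^{-1}}(t)$, and hence $\lambda_* := \inf_{t\ge 0}\lambda_{\min}^{\C}(t)>0$.

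For (iii), under Assumption~\ref{first_assum_on_U} the Gaussian reference $\pi_0\propto e^{-\frac{\beta}{2}x^\top A x}$ satisfies LSI with constant $\beta\ell_A$ by the Bakry-\'Emery criterion, and since $\pi \propto e^{-\beta V}\pi_0$ with $V$ Lipschitz, a perturbation result for LSI (for instance via a Lyapunov criterion of Cattiaux-Guillin-Wu type, or a direct convexity-at-infinity argument) provides LSI for $\pi$ with some $\lambda_{\ls}>0$ depending only on $\ell_A,L_V,\beta$. Combining (i)--(iii) yields
\begin{equation*}
\frac{d}{dt}\h(\mu_t\mid\pi) \;\le\; -\frac{\lambda_*}{\beta}I(\mu_t\mid\pi) \;\le\; -\frac{2\lambda_*\lambda_{\ls}}{\beta}\,\h(\mu_t\mid\pi),
\end{equation*}
and Grönwall delivers the bound with $c_1=\h(\mu_0\mid\pi)$ and $c_2 = 2\lambda_*\lambda_{\ls}/\beta$. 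The delicate point, compared to the Gaussian setting of \cite{garbunoinigo2019interacting,carrillo_vaes_2021_cov_pre_fkp}, is the interplay between the uniform moment bound and the matrix Riccati analysis needed to rule out degeneration of $\C(\mu_t)$ through the Lipschitz perturbation.
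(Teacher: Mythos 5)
Your overall architecture — entropy dissipation with the preconditioner entering the Dirichlet form, a uniform lower bound on $\lambda_{\min}^{\C}(t)$, and a log-Sobolev inequality for $\pi$, combined via Gr\"onwall — is exactly the paper's. Steps (i) and (iii) match the paper in both idea and detail. The gap is in step (ii).

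For the uniform lower bound on $\lambda_{\min}^{\C}(t)$ you propose to first establish a uniform-in-time bound on $\E|X(t)|^{2}$ ``separately via a Lyapunov estimate exploiting the dissipation from $A$,'' and then feed $|\mathcal{R}_V(\mu_t)|\le CL_V|\C(\mu_t)|\sqrt{\trace\C(\mu_t)}$ into the Riccati-type inverse ODE. This does not go through as written, for two reasons. First, the moment bound cannot be obtained independently of the eigenvalue bounds: the drift $-\C(\mu_t)\nabla U(X)$ can be arbitrarily weak if $\C(\mu_t)$ degenerates, so there is no a priori dissipation from $A$ to exploit, and in the paper the uniform moment bound (Theorem~\ref{lem:X_bounded_moments}) is in fact proved \emph{after}, and using, both eigenvalue bounds. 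Second, the norm bound on $\mathcal{R}_V$ you write down, when plugged into $y^\top\C^{-1}\mathcal{R}_V\C^{-1}y$, produces a term whose size is controlled by $|\C^{-1}|^{2}|\C|\sqrt{\trace\C}$, which does not close as a scalar differential inequality for $\lambda_{\max}^{\C^{-1}}(t)$ because of the dimensional mismatch between the operator-norm bound and the quadratic form you are trying to bound.

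The paper avoids both obstacles with a structural cancellation that removes any need for moment information. Writing $Z_t=X(t)-\E X(t)$ and $P_t=\nabla V(X(t))-\E\nabla V(X(t))$, the perturbation in the inverse ODE is exactly $2\,y^\top\C_t^{-1}\E[Z_tP_t^\top]\,y$. Applying Cauchy--Schwarz inside the expectation and then using the identity
\begin{align*}
\E\big[\,y^\top\C_t^{-1}Z_t\,\big]^{2}=y^\top\C_t^{-1}\E[Z_tZ_t^\top]\C_t^{-1}y=y^\top\C_t^{-1}y,
\end{align*}
together with $\E[P_t^\top y]^{2}\le 4L_V^{2}|y|^{2}$, gives
\begin{align*}
y^\top\C_t^{-1}\E[Z_tP_t^\top]\,y\;\le\;2L_V\,|y|\,\sqrt{y^\top\C_t^{-1}y}.
\end{align*}
This makes the resulting differential inequality for $\Lambda(t,y)=y^\top\C_t^{-1}y/|y|^{2}$ self-contained (linear after one Young inequality), and the uniform lower bound on $\lambda_{\min}^{\C}(t)$ follows by an integrating factor, with no reference to moments of $X(t)$. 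You should replace your detour through the moment bound with this cancellation, which is the crux that lets the result go beyond the Gaussian case.
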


\begin{remark}
An application of Talagrand's inequality implies that we also have exponential convergence in $2$-Wasserstein distance. This, in turn, ensures weak convergence of $\mathcal{L}^{X}_{t}$ to the Gibbs measure, as well as convergence of the second moment of $\mathcal{L}^{X}_{t}$ to the second moment of the Gibbs measure (see \cite[Theorem~7.12]{villani2003topics}).
\end{remark}

The proof of exponential convergence requires the following lemma. 
\begin{lemma} \label{min_eigen_bound_lemma}
    Under Assumption~\ref{first_assum_on_U}, the following bound holds:
    \begin{align}
        \lambda_{\min}^{\C}(t) \geq \min\bigg(\frac{1}{2\beta\big(   L_{A} + 2\beta L_{V}^{2}\big)}, \lambda^{\C}_{\min}(0)\bigg),
    \end{align}
    where $L_A$ and $L_V$ are from Assumption~\ref{first_assum_on_U}. 
\end{lemma}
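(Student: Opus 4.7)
The plan is to convert the desired lower bound on $\lambda_{\min}^{\C}(t)$ into an upper bound on the largest eigenvalue of $P_t := \C_t^{-1}$, and to control $P_t$ through a matrix-valued ODE derived from \eqref{eq:ek_sdes} (here I abbreviate $\C_t := \C(\mu_t)$ and $M_t := \M(\mu_t)$). Applying It\^o's formula to $(X_t - M_t)(X_t - M_t)^{\top}$, taking expectation, and using $\frac{d}{dt}M_t = -\C_t\,\E[\nabla U(X_t)]$, one obtains
\begin{equation*}
\frac{d}{dt}\C_t \;=\; -\C_t Q_t - Q_t^{\top}\C_t + \frac{2}{\beta}\C_t,\qquad Q_t := \E\big[\nabla U(X_t)(X_t - M_t)^{\top}\big].
\end{equation*}
Splitting $\nabla U(x) = Ax + \nabla V(x)$ gives $Q_t = A\C_t + R_t$ where $R_t := \E[\nabla V(X_t)(X_t - M_t)^{\top}]$. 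If $\lambda_{\min}^{\C}(0) = 0$ the claim is trivial, so assume $\C_0$ is positive definite and let $T^{\star}$ be the maximal time on which $\C_t$ stays positive definite. Differentiating $P_t = \C_t^{-1}$ on $[0,T^{\star})$ and simplifying with $P_t \C_t = I$ yields
\begin{equation*}
\frac{d}{dt}P_t \;=\; 2A + R_t P_t + P_t R_t^{\top} - \frac{2}{\beta}P_t.
\end{equation*}

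The key step is a Lipschitz estimate on $R_t$ via $|\nabla V|\le L_V$. For any unit vector $v \in \R^d$, set $w := P_t v$; since $w^{\top}\C_t w = v^{\top} P_t\C_t P_t v = v^{\top} P_t v$, Cauchy--Schwarz gives
\begin{equation*}
\big|v^{\top} R_t P_t v\big| \;=\; \Big|\E\big[(v^{\top}\nabla V(X_t))\,((X_t-M_t)^{\top} w)\big]\Big| \;\le\; L_V\sqrt{v^{\top} P_t v}.
\end{equation*}
Combining this with $v^{\top} A v \le L_A$ and using $v^{\top}(R_t P_t + P_t R_t^{\top})v = 2 v^{\top} R_t P_t v$ gives, for every fixed unit $v$, the scalar differential inequality
\begin{equation*}
\frac{d}{dt}\big(v^{\top} P_t v\big) \;\le\; 2L_A + 2L_V\sqrt{v^{\top} P_t v} - \frac{2}{\beta}\,v^{\top} P_t v, \qquad t \in [0,T^{\star}).
\end{equation*}

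A short analysis of the right-hand side, for example via Young's inequality $2L_V\sqrt{z} \le \tfrac{z}{\beta}+\beta L_V^{2}$ (or by locating the positive root of the quadratic in $\sqrt{z}$), shows it is non-positive whenever $v^{\top} P_t v \ge 2\beta(L_A + 2\beta L_V^{2})$. Hence $v^{\top} P_t v$ cannot escape $\max\{v^{\top} P_0 v,\, 2\beta(L_A + 2\beta L_V^{2})\}$, and taking the supremum over unit $v$ yields
\begin{equation*}
\lambda_{\max}^{P}(t) \;\le\; \max\big\{\lambda_{\max}^{P}(0),\; 2\beta(L_A + 2\beta L_V^{2})\big\},
\end{equation*}
which is equivalent to the desired lower bound on $\lambda_{\min}^{\C}(t)$. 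Since this bound is strictly positive, a standard continuation argument forces $T^{\star} = \infty$.

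The main technical hurdles are (i) the rigorous derivation of the $\C_t$ ODE, which requires enough regularity of $\mu_t$ (moment bounds, presumably established elsewhere in the paper) to justify It\^o's formula and the interchange of derivative and expectation; and (ii) the self-consistent bootstrap turning the conditional bound on $[0,T^{\star})$ into a global-in-time bound. The pay-off of tracking the scalar quantity $v^{\top} P_t v$ for each fixed $v$ is that it completely sidesteps non-smoothness of the eigenvalue map $\C \mapsto \lambda_{\max}(\C^{-1})$ at crossings: we never differentiate an eigenvalue, only compare scalar functions of time.
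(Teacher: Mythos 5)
Your proposal is correct and takes essentially the same route as the paper: derive the matrix ODE for $\Sigma_t$, pass to $\Sigma_t^{-1}$, project onto a fixed unit vector, bound the $\nabla V$-contribution via Cauchy--Schwarz together with the Lipschitz constant $L_V$, and close the resulting scalar differential inequality. The only differences are cosmetic --- you avoid an unnecessary factor of $2$ by not centering $\nabla V$ in the Cauchy--Schwarz step (the paper bounds $|\nabla V(X_t) - \E\nabla V(X_t)| \le 2L_V$, whereas your factorization gives $L_V$ directly), you close the scalar estimate with a barrier/escape argument rather than the paper's explicit integration with $e^{t/\beta}$ followed by a monotonicity observation, and you spell out the positivity continuation argument (that $\Sigma_t$ stays invertible) that the paper leaves implicit.
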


Analogous to the uniform in time lower bound on the smallest eigenvalue of $\Sigma_t$, as stated in Lemma~\ref{min_eigen_bound_lemma} above, we can also prove uniform in time upper bound on the largest eigenvalue
(see Lemma~\ref{max_eigen_bound_lemma}). These uniform lower and upper bounds are of interest in their own right. Moreover, a consequence of the lower and upper bounds is uniform in time moment bounds of the mean-field dynamics which we mention here as a separate result and prove in Section~\ref{unif_mome_bound}.  

For brevity, we denote 
\begin{align}
  \ell_{\C} &:=  \min\bigg(\frac{1}{2\beta\big(   L_{A} + 2\beta L_{V}^{2}\big)}, \lambda^{\C}_{\min}(0)\bigg), \label{notation_l_sigma}\\ L_{\C} &:= \lambda^{\C}_{\max}(0)e^{-\ell_{A} t } + \frac{1}{\ell_{A}}\Bigg(\bigg(\frac{3}{4}\bigg)^{3}\bigg( \frac{ K_{V}}{ \ell_{A}}\bigg)^{2} + 
    \frac{3}{\beta^{2}}\Bigg) (1 - e^{-\ell_{A}t}). \label{notation_L_sigma}
\end{align}

\begin{theorem}\label{lem:X_bounded_moments}
    Under Assumption~\ref{first_assum_on_U}, the following bound holds for all $l >0 $:
       \begin{align}
    \E |X(t)|^{2 l} \leq C_{M}, \label{bound_mom_est}
\end{align}
 where $X(t)$ is from (\ref{eq:ek_sdes}) and $C_{M}$ is independent of $t$.
\end{theorem}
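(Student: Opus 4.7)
The plan is to derive the uniform moment bound by applying Itô's formula to $\phi(x) = |x|^{2l}$ for integer (or at least $l \geq 1$) and then bootstrapping to non-integer $l$ via Jensen. Since $\nabla U(x) = Ax + \nabla V(x)$ with $|\nabla V| \leq L_V$, and since Lemma~\ref{min_eigen_bound_lemma} together with its counterpart Lemma~\ref{max_eigen_bound_lemma} provide uniform-in-time bounds $\ell_\C \leq \lambda_{\min}^{\C}(t)$ and $\lambda_{\max}^{\C}(t) \leq \bar L_\C := \sup_{t\geq 0} L_\C(t) < \infty$ (with $\bar L_\C$ finite by inspection of \eqref{notation_L_sigma}), the dissipative structure from the quadratic part of $U$ should dominate, yielding a linear differential inequality for $\E|X(t)|^{2l}$ with negative feedback.

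Concretely, applying Itô's formula to $|X(t)|^{2l}$ (using a standard smoothing $(|x|^2 + \delta)^l$ near the origin and sending $\delta \to 0$ if needed) and taking expectations eliminates the martingale term, leaving
\begin{align*}
\frac{d}{dt}\E|X(t)|^{2l}
&= -2l\, \E\bigl[|X(t)|^{2l-2}\, X(t)^\top \C(\mu_t)(A X(t) + \nabla V(X(t)))\bigr] \\
&\quad + \frac{1}{\beta}\, \E\bigl[\mathrm{tr}\bigl(\nabla^2 \phi(X(t))\, \C(\mu_t)\bigr)\bigr].
\end{align*}
I would bound the dissipative term below using $X^\top \C A X \geq \ell_\C \ell_A |X|^2$ (combining the uniform lower bound on $\C$ with $A \succeq \ell_A I$), the perturbation term using $|X^\top \C \nabla V(X)| \leq \bar L_\C L_V |X|$, and the diffusion trace term by a direct computation giving $\mathrm{tr}(\nabla^2 \phi(x)\C) \leq 2l(d + 2l - 2)\bar L_\C |x|^{2l-2}$. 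Collecting,
\begin{align*}
\frac{d}{dt}\E|X(t)|^{2l} \leq -2l\, \ell_\C \ell_A\, \E|X(t)|^{2l} + C_1\, \E|X(t)|^{2l-1} + C_2\, \E|X(t)|^{2l-2},
\end{align*}
for constants $C_1, C_2$ depending only on $l, d, \beta, L_V, \bar L_\C$.

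Finally, Young's inequality absorbs the two lower-order terms into $\varepsilon\, \E|X(t)|^{2l}$ plus a constant, so choosing $\varepsilon$ small yields
\begin{align*}
\frac{d}{dt}\E|X(t)|^{2l} \leq -l\, \ell_\C \ell_A\, \E|X(t)|^{2l} + C,
\end{align*}
and a Grönwall-type comparison gives $\E|X(t)|^{2l} \leq \E|X(0)|^{2l} e^{-l\ell_\C \ell_A t} + C/(l\ell_\C \ell_A) =: C_M$, uniform in $t$ (assuming finite initial $2l$-th moment). For $l \in (0,1)$, Jensen's inequality applied to the concave map $y \mapsto y^l$ reduces the statement to the case $l=1$ already established, completing the proof. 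The main obstacle — and the reason this argument works at all — is precisely the uniform-in-time control of the spectrum of $\C(\mu_t)$: without the lower bound $\ell_\C$ the dissipation could vanish, and without the upper bound $\bar L_\C$ the noise and perturbation contributions could grow unboundedly. Both are already secured by Lemmas~\ref{min_eigen_bound_lemma} and \ref{max_eigen_bound_lemma}, making the moment estimate a relatively direct consequence.
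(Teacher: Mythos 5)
Your proposal has a genuine gap in the dissipation estimate. You assert that $X^\top \C(\mu_t) A X \geq \ell_\C \ell_A |X|^2$, justifying it by ``combining the uniform lower bound on $\C$ with $A \succeq \ell_A I$.'' But the product of two symmetric positive-definite matrices need not have a positive-definite symmetric part: the quadratic form $x \mapsto x^\top \C A x$ is governed by $\tfrac{1}{2}(\C A + A\C)$, which can be indefinite. For instance, with
\begin{equation*}
\C = \begin{pmatrix} 1 & 0 \\ 0 & \epsilon \end{pmatrix}, \qquad A = \begin{pmatrix} 1 & a \\ a & 1 \end{pmatrix}, \qquad 0 < a < 1,\ \epsilon>0,
\end{equation*}
one computes $\det\bigl(\tfrac{1}{2}(\C A + A\C)\bigr) = \epsilon - a^2(1+\epsilon)^2/4$, which is negative for $\epsilon$ small, so the symmetric part has a negative eigenvalue and $x^\top \C A x < 0$ for some $x$. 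Since $\C(\mu_t)$ is the data-driven covariance and there is no reason for it to commute with $A$, the claimed dissipation term $-2l\,\ell_\C\ell_A\,\E|X(t)|^{2l}$ does not follow, and with it the linear differential inequality collapses.

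The paper circumvents exactly this obstacle by taking $U^{l}(X(t))$, not $|X(t)|^{2l}$, as the Lyapunov function. With that choice the drift contribution produces the genuine quadratic form $\nabla U(X)^\top \C(\mu_t) \nabla U(X) \geq \ell_\C |\nabla U(X)|^2$, which is sign-definite because the \emph{same} vector $\nabla U(X)$ appears on both sides of $\C(\mu_t)$; the dissipative gain is then recovered from the two-sided comparisons $|\nabla U(x)|^2 \gtrsim U(x) - \text{const}$ and $|\nabla U(x)|^2 \lesssim U(x) + \text{const}$ derived in equations \eqref{eqn_new_eld_2.33}--\eqref{eqn_new_eld_2.35}, not from a lower bound on $x^\top \C A x$. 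Your argument would go through if you replaced $|x|^{2l}$ by $U(x)^l$ (or even by $(x^\top A x)^l$, in which case the relevant form becomes $(AX)^\top \C(\mu_t)(AX) \geq \ell_\C |AX|^2 \geq \ell_\C \ell_A^2 |X|^2$); the remaining ingredients you invoke --- the uniform eigenvalue bounds from Lemmas~\ref{min_eigen_bound_lemma} and~\ref{max_eigen_bound_lemma}, Young's inequality to absorb lower-order terms, the Gr\"onwall comparison, and the reduction of small $l$ by Jensen/Cauchy--Schwarz --- are all consistent with the paper and sound. The choice of Lyapunov function is the one idea you are missing.
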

For brevity, we do not explicitly write the constant $C_{M}$ appearing (\ref{bound_mom_est}) but it can easily be inferred from the proof as we explicitly track all the constants.  

\begin{corollary}
    Let Assumption~\ref{first_assum_on_U} hold. Then, for all $p >0 $, the $p-$th moment of the law of mean-field SDEs (\ref{eq:ek_sdes}) converges to the $p-$th moment of the Gibbs measure (\ref{Gibb_measure}), i.e.,
    \begin{align}
        \int_{\R^d}|x|^p \mathcal{L}^{X}_t (\de x) \rightarrow  \int_{\R^d}|x|^p \mu(\de x)
    \end{align}
    as $t \rightarrow \infty$. 
\end{corollary}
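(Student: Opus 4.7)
The plan is to combine the two preceding results. Theorem~\ref{main_thrm_exp_conv}, together with Talagrand's inequality (which applies because the quadratic-plus-Lipschitz structure of $U$ ensures $\pi$ satisfies a log-Sobolev inequality), yields $\mathcal{W}_2(\mathcal{L}^{X}_t,\mu) \to 0$ and in particular weak convergence of $\mathcal{L}^{X}_t$ to $\mu$. Theorem~\ref{lem:X_bounded_moments} supplies a uniform-in-time bound on every moment of $\mathcal{L}^{X}_t$. The corollary then follows by the standard upgrade of weak convergence to moment convergence via uniform integrability.

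Concretely, fix $p > 0$ and pick any $l$ with $2l > p$. By Theorem~\ref{lem:X_bounded_moments}, $\sup_{t \geq 0} \E|X(t)|^{2l} \leq C_M$. For any $R > 0$, a Chebyshev-type truncation gives the tail estimate $\int_{\{|x|>R\}} |x|^p\,\mathcal{L}^{X}_t(\de x) \leq R^{p-2l}\,\E|X(t)|^{2l} \leq C_M\, R^{p-2l}$, which tends to zero uniformly in $t$ as $R \to \infty$. Hence $\{|\cdot|^p\}$ is uniformly integrable under the family $\{\mathcal{L}^{X}_t\}_{t \geq 0}$. Combined with the weak convergence established in the previous paragraph, a standard uniform-integrability argument (cf.\ the proof of \cite[Theorem~7.12]{villani2003topics}) then yields $\langle |\cdot|^p, \mathcal{L}^{X}_t\rangle \to \langle |\cdot|^p, \mu\rangle$. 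Finiteness of the limit on the right-hand side can be verified either by Fatou's lemma applied to the uniform moment bound, or directly from the Gaussian-like tails of $\pi$, since Assumption~\ref{first_assum_on_U} implies $U(x) \geq \tfrac{\ell_A}{2}|x|^2 - L_V|x| - |V(0)|$.

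There is no essential obstacle here: the statement is a direct corollary of the two preceding theorems, and the only non-trivial ingredient is the uniform integrability step above. What is worth emphasizing is that Theorem~\ref{lem:X_bounded_moments} controls \emph{every} moment rather than a fixed order, which is precisely what makes the argument valid for arbitrary $p > 0$; if only moments up to some finite order $q$ were uniformly bounded, the same reasoning would be restricted to $p < q$.
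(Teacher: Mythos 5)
Your argument is correct and is essentially the same as the paper's: the paper likewise obtains the result by combining weak convergence (via Theorem~\ref{main_thrm_exp_conv} and Talagrand) with the uniform-in-time moment bound of Theorem~\ref{lem:X_bounded_moments}, invoking the standard ``weak convergence plus uniform integrability implies moment convergence'' result (the paper cites \cite[Theorem~2.20]{van2000asymptotic} where you cite the proof of \cite[Theorem~7.12]{villani2003topics}, but the content is the same). Your explicit Chebyshev-type verification of uniform integrability simply spells out a step the paper leaves implicit.
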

\begin{proof}
 The proof directly follows from \cite[Theorem~2.20]{van2000asymptotic} using weak convergence of and uniform integrability of moments.
\end{proof}

\begin{corollary}
    Let Assumption~\ref{first_assum_on_U} hold. Then, for all $p >0 $, we have the following convergence:
    \begin{align}
        \mathcal{W}_p(\mathcal{L}^{X}_t,  \mu) \rightarrow 0
    \end{align}
    as $t \rightarrow \infty$. 
\end{corollary}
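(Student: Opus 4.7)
The plan is to combine the already-established exponential $\mathcal{W}_{2}$-convergence with the convergence of all moments given by the previous corollary. As a preliminary observation, both $\mathcal{L}^{X}_{t}$ and $\mu$ have finite moments of every order: for $\mathcal{L}^{X}_{t}$ this is exactly Theorem~\ref{lem:X_bounded_moments}, and for $\mu$ it follows from Assumption~\ref{first_assum_on_U}, since $U(x) \ge \tfrac{1}{2}\ell_{A}|x|^{2} - L_{V}|x| - |V(0)|$ forces $e^{-\beta U}$ to decay faster than any polynomial.

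For $p\in (0,2]$ the argument is short: by the monotonicity $\mathcal{W}_{p}\le \mathcal{W}_{2}$ (valid because the second moments are finite) together with Talagrand's inequality, Theorem~\ref{main_thrm_exp_conv} yields
\begin{align*}
    \mathcal{W}_{p}(\mathcal{L}^{X}_{t},\mu) \le \mathcal{W}_{2}(\mathcal{L}^{X}_{t},\mu) \le \sqrt{\frac{2}{\lambda_{\ls}}\h(\mathcal{L}^{X}_{t}\,|\,\mu)} \le \sqrt{\frac{2c_{1}}{\lambda_{\ls}}}\, e^{-c_{2} t/2} \longrightarrow 0.
\end{align*}

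For $p>2$ I would instead invoke the characterization \cite[Theorem~7.12]{villani2003topics}: convergence in $\mathcal{W}_{p}$ is equivalent to weak convergence together with convergence of the $p$-th moment. Weak convergence of $\mathcal{L}^{X}_{t}$ to $\mu$ is an immediate consequence of the $\mathcal{W}_{2}$-convergence just established, while the $p$-th moment convergence is precisely the content of the preceding corollary. Combining these two ingredients closes the case $p>2$. I do not anticipate any real obstacle: the statement is essentially a repackaging of the exponential contraction in relative entropy and the uniform-in-time moment bounds, and the only subtlety worth noting is the need to split into the ranges $p\le 2$ and $p>2$, since Talagrand directly controls only $\mathcal{W}_{2}$ and the additional moment information from Theorem~\ref{lem:X_bounded_moments} is what lifts convergence to higher orders.
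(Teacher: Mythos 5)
Your proof is correct and, for $p>2$, coincides with the paper's argument: weak convergence of $\mathcal{L}^{X}_{t}$ to $\mu$ combined with the $p$-th moment convergence of the preceding corollary, then \cite[Theorem~7.12]{villani2003topics}. The paper applies this single argument uniformly for all $p>0$, so your separate treatment of $p\le 2$ via the monotonicity $\mathcal{W}_p\le\mathcal{W}_2$ and Talagrand is not needed for the stated conclusion, though it does yield an explicit exponential rate in that range that the paper's non-quantitative route does not.
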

\begin{proof}
The result follows from  the previous corollary and \cite[Theorem~7.12]{villani2003topics}.
\end{proof}

\begin{lemma}\label{max_eigen_bound_lemma}
    Under Assumption~\ref{first_assum_on_U}, the following bound holds:
    \begin{align}
       \lambda^{\C}_{\max}(t) \leq \lambda^{\C}_{\max}(0)e^{-\ell_{A} t } + \frac{1}{\ell_{A}}\Bigg(\bigg(\frac{3}{4}\bigg)^{3}\bigg( \frac{ K_{V}}{ \ell_{A}}\bigg)^{2} + 
    \frac{3}{\beta^{2}}\Bigg) (1 - e^{-\ell_{A}t}),
    \end{align}
    where $L_A$ and $L_V$ are from Assumption~\ref{first_assum_on_U}.
\end{lemma}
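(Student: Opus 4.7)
The plan is to derive a matrix ODE for $\C_t$, extract a scalar differential inequality for $\lambda^{\C}_{\max}(t)$, and apply Gr\"onwall's lemma---essentially the upper-bound counterpart of the argument for Lemma~\ref{min_eigen_bound_lemma}.

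First, applying It\^o's formula to the centered process $(X(t) - \M(\mu_t))(X(t) - \M(\mu_t))^\top$ and taking expectation produces a matrix ODE for $\C_t$. Using the decomposition $\nabla U = Ax + \nabla V$ and $\E[AX(t)(X(t) - \M(\mu_t))^\top] = A\C_t$, this reads
\[
\dot\C_t = -2\C_t A\C_t - \C_t D_t - D_t^\top \C_t + \frac{2}{\beta}\C_t,
\]
with $D_t := \E[\nabla V(X(t))(X(t)-\M(\mu_t))^\top]$. The dissipative quadratic-in-$\C$ term comes from the Hessian $A$, the cross terms encode the Lipschitz perturbation $V$, and $(2/\beta)\C_t$ is the quadratic-variation contribution.

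Next, fix a unit eigenvector $v(t)$ of $\C_t$ corresponding to $\lambda^{\C}_{\max}(t)$. By envelope-type arguments (Kato's perturbation theory, or Danskin), $\lambda^{\C}_{\max}$ is locally Lipschitz in $t$ and its upper Dini derivative satisfies $\frac{d^+}{dt}\lambda^{\C}_{\max} \leq v^\top \dot\C_t v$. Using $v^\top \C_t = \lambda^{\C}_{\max} v^\top$, $v^\top A v \geq \ell_A$, and the Cauchy--Schwarz bound
\[
|v^\top D_t v| \leq \sqrt{\E[(v\cdot\nabla V(X(t)))^2]\cdot\E[(v\cdot(X(t)-\M(\mu_t)))^2]} \leq L_V\sqrt{\lambda^{\C}_{\max}},
\]
(via the Lipschitzness $|\nabla V|\leq L_V$ from Assumption~\ref{first_assum_on_U}), I obtain the scalar inequality
\[
\frac{d^+}{dt}\lambda^{\C}_{\max} \leq -2\ell_A(\lambda^{\C}_{\max})^2 + 2L_V(\lambda^{\C}_{\max})^{3/2} + \frac{2}{\beta}\lambda^{\C}_{\max}.
\]

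The final step is to apply suitably weighted Young's inequalities, absorbing the $3/2$-power and linear terms into (part of) the dissipative quadratic while leaving a residual $-\ell_A \lambda^{\C}_{\max}$ for Gr\"onwall. A Young's $(4,4/3)$ splitting of $2L_V(\lambda^{\C}_{\max})^{3/2}$ produces the $(3/4)^3$ factor, and a Young's $(2,2)$ splitting of $(2/\beta)\lambda^{\C}_{\max}$ produces the $3/\beta^2$ factor, yielding
\[
\frac{d^+}{dt}\lambda^{\C}_{\max} + \ell_A \lambda^{\C}_{\max} \leq \left(\frac{3}{4}\right)^{\!3}\!\left(\frac{K_V}{\ell_A}\right)^{\!2} + \frac{3}{\beta^2}.
\]
Integrating via Gr\"onwall then yields the claimed bound. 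The main obstacle is the precise bookkeeping in this Young's step: the weights must be tuned so that the constants $(3/4)^3$ and $3$ appear with exactly the correct powers of $\ell_A$, $K_V$ (the relevant Lipschitz-type constant), and $1/\beta$, while the three residual pieces (the leftover $\ell_A (\lambda^{\C}_{\max})^2$, the $L_V$-dependent linear term, and the added $\ell_A \lambda^{\C}_{\max}$) balance correctly against the dissipation. A secondary concern is the non-smoothness of $\lambda^{\C}_{\max}$ at eigenvalue crossings, which is handled by working with upper Dini derivatives and a measurable Kato-type selection of the eigenvector.
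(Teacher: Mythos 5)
Your proposal follows the same overall strategy as the paper (matrix ODE for $\C_t$ via It\^o, scalar differential inequality, Young's inequalities, Gr\"onwall), but the two treatments differ in the scalarization step, and this difference is worth noting.

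The paper fixes a unit vector $y$, derives the differential inequality for $y^\top \C_t y$, and only at the end takes the supremum over $y$. To make the quadratic dissipation usable for a fixed $y$, it needs the auxiliary spectral inequality $(y^\top \C_t y)^2 \le y^\top \C_t^2 y$ for $|y|=1$, which it proves by eigendecomposition. You instead track the top eigenvector $v(t)$ directly and use the envelope/Dini-derivative inequality $\frac{d^+}{dt}\lambda^{\C}_{\max} \le v^\top \dot\C_t v$. At the eigenvector, $(v^\top \C_t v)^2 = v^\top \C_t^2 v$, so you never need the spectral lemma; in exchange you invoke Kato-type perturbation theory to handle non-differentiability of $\lambda^{\C}_{\max}$ at crossings. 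Both are legitimate; the paper's fixed-vector route is somewhat more elementary, while yours gives equality where the paper only has an inequality and sidesteps the supremum-after-integration step.

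Two bookkeeping points to watch. First, your Cauchy--Schwarz estimate $|v^\top D_t v| \le L_V \sqrt{\lambda^{\C}_{\max}}$ uses the pointwise bound $|\nabla V| \le L_V$ directly, giving the cross-term contribution $2 L_V \lambda_{\max}^{3/2}$; the paper's route through the centered increment $P_t = \nabla V - \E\nabla V$ and the bound $|P_t| \le 2 L_V$, encoded as $K_V = 4L_V^2$, yields $2\sqrt{K_V}\,\lambda_{\max}^{3/2} = 4 L_V \lambda_{\max}^{3/2}$ (a factor $2$ looser). Since the lemma's conclusion is phrased in terms of $K_V$, you need to carry your tighter constant through consistently (or substitute $2L_V = \sqrt{K_V}$) to recover the stated form. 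Second, to leave the residual dissipation $-\ell_A \lambda^{\C}_{\max}$ needed for Gr\"onwall, the paper also interposes the crude estimate $-x^2 \le -x + 1$ (adding a benign constant $\ell_A$ to the right-hand side); your write-up keeps the dissipative term in the quadratic form at the splitting stage, so you will also need this step. You correctly flag the precise tuning of the Young weights as the delicate part; that assessment is fair, as the paper's own proof has some visible constant inconsistencies in exactly that part of the computation.

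Overall the proposal is sound in approach and would reach the result, modulo carefully tracking the constants and the $L_V$ versus $K_V$ convention.
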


 For the sake of convenience, in the following we will denote $
    \C_{t} := \C(\mu_{t}) $. 
We will represent $\C^{-1}_{t}$ as the inverse of $\C_{t}$. 
\subsection{Proof of Lemma~\ref{min_eigen_bound_lemma}}
\begin{proof}[Proof of Lemma~\ref{min_eigen_bound_lemma}]
We have the following due to (\ref{el_sde_mf}):
\begin{align*}
     \E X(t) = \E X(0) - \int_{0}^{t} \C_{s}\E\nabla  U(X(s)) \de s, \quad 
     \E X(t)^{\top} = \E X(0)^{\top} -  \int_{0}^{t} \E( \nabla U(X(s)))^{\top}\C_{s}^{\top} \de s.
\end{align*}
Consider the following stochastic dynamics:
\begin{align*}
    \de (X(t) - \E X(t)) =   -  \C_t [\nabla U(X(t)) - \E \nabla U(X(t))]\de t +  \sqrt{\frac{2\C_{t}}{\beta}}  \de W(t).
\end{align*}

    Using Ito's product rule, we have
\begin{align*}
    \de (X(t) - \E X(t)) (X(t) &- \E X(t))^{\top} = 
    - [ (X(t) - \E X(t)) (\nabla U(X(t)) - \E \nabla U(X(t)))^{\top}] \C_{t} \de t   
    \\  & \quad 
    - \C_{t}[ (\nabla U(X(t)) - \E \nabla U(X(t)))(X(t) - \E X(t))^{\top}]\de t + \frac{2}{\beta}\C_{t}\de t
    \\  & \quad 
          + \sqrt{\frac{2}{\beta}} (X(t) - \E X(t)) \de W(t)^{\top} \sqrt{\C_{t}} + \sqrt{\frac{2}{\beta}} \sqrt{\C_{t}}\de W(t) (X(t) - \E X(t))^{\top}.
 \end{align*}
    Therefore,
    \begin{align*}
        \de \C_{t} &= 
    - \E[ (X(t) - \E X(t)) (\nabla U(X(t)) - \E \nabla U(X(t)))^{\top}] \C_{t} \de t   
    \\  & \quad 
    - \C_{t}\E[ (\nabla U(X(t)) - \E \nabla U(X(t)))(X(t) - \E X(t))^{\top}]\de t + \frac{2}{\beta}\C_{t}\de t. \numberthis \label{n_langevin_eqn_2.3}
    \end{align*}

   Due to (\ref{n_langevin_eqn_2.3}), we have the following matrix valued differential equation:
   \begin{align*}
       \frac{\de }{\de t}\C_{t}^{-1} & = - \C_{t}^{-1} \frac{\de \C_{t}}{\de t} \C_{t}^{-1}
     =    \C_{t}^{-1}\E[ (X(t) - \E X(t)) (\nabla U(X(t)) - \E \nabla U(X(t)))^{\top}]    
    \\  & \quad 
    + \E[ (\nabla U(X(t)) - \E\nabla U(X(t)))(X(t) - \E X(t))^{\top}]\C_{t}^{-1} - \frac{2}{\beta}\C_{t}^{-1}. \numberthis \label{n_lange_eqn_2.6}
   \end{align*}
Using Assumption~\ref{first_assum_on_U}, we get
\begin{align*}
      \frac{\de }{\de t}\C_{t}^{-1}  & 
      = 2A + \C_{t}^{-1}\E[ (X(t) - \E X(t)) (\nabla V(X(t)) - \E \nabla V(X(t)))^{\top}]    
    \\  & \quad 
    + \E[ (\nabla V(X(t)) - \E  \nabla V(X(t)))(X(t) - \E X(t))^{\top}]\C_{t}^{-1} - \frac{2}{\beta}\C_{t}^{-1}. 
\end{align*}
For any $y \in \R^{d}$ with $|y| \neq 0$, we have
\begin{align*}
     \frac{\de}{\de t} y^\top\C_t^{-1} y & =  y^{\top}\frac{\de}{\de t}\C_t^{-1}y 
      = 2y^{\top} A y + y^\top\C^{-1}_{t}\E[ Z_t  P_t^{\top}] y  +  y^{\top}\E[  P_t Z_t^{\top} ] \C_{t}^{-1} y  - \frac{2}{\beta}y^{\top}\C_t^{-1}y
      \\  &
      = 2y^{\top} A y + 2y^\top\C^{-1}_{t}\E[ Z_t  P_t^{\top}] y    - \frac{2}{\beta}y^{\top}\C_{t}^{-1}y,  \numberthis \label{n_langevin_eqn:2.6}
\end{align*}
where, for brevity, we have denoted $Z_t := X(t) - \E X(t)$ and $ P_t := \nabla V(X(t)) - \E\nabla V(X(t))$. 

We first deal with the second term on the right hand side of (\ref{n_langevin_eqn:2.6}) by using Cauchy-Bunyakovsky-Schwarz inequality:
\begin{align*}
    y^\top\C^{-1}_{t}\E[ Z_t  P_t^{\top}] y &=  \E [ y^\top\C^{-1}_{t} Z_t  P_t^{\top} y]  
    \leq  (\E [ y^\top\C^{-1}_{t} Z_t]^{2})^{1/2} (\E[ P_t^{\top} y]^{2})^{1/2}
    \\  &
    \leq 2L_{V} |y| (\E [ y^\top\C^{-1}_{t} Z_t]^{2})^{1/2},
\end{align*}
where $L_V$ is as introduced in Assumption~\ref{first_assum_on_U}.

Noting that
\begin{align*}
    \E [ y^\top\C^{-1}_{t} Z_t]^{2} & = \E ([ y^\top\C^{-1}_{t} Z_t] [ Z_t^{\top}\C^{-1}_{t} y])
    = y^{\top}\C^{-1}_{t} \E (Z_t Z_t^{\top}) \C^{-1}_{t}y 
    = y^{\top}\C^{-1}_{t} y,
\end{align*}
since $ \E (Z_t Z_t^{\top}) = \C_t$. 
Therefore,
\begin{align}
     y^\top\C^{-1}_{t}\E[ Z_t  P_t^{\top}]y  \leq 2L_{V} |y| \sqrt{y^{\top} \C_{t}^{-1}y }.
\end{align}

This implies
\begin{align*}
     \frac{\de }{\de t} y^\top\C_t^{-1}y &= 2y^{\top} A y + 2y^\top\C^{-1}_{t}\E[ Z_t  P_t^{\top}] y    - \frac{2}{\beta}y^{\top}\C_{t}^{-1}y
        \\  & 
     \leq 2y^{\top} A y + 4L_{V} |y| \sqrt{y^{\top} \C_{t}^{-1}y }  - \frac{2}{\beta}y^{\top}\C_{t}^{-1}y. \numberthis \label{ildo_eqn_4.15}
\end{align*}
For the sake of convenience, we denote
\begin{align*}
    \Lambda(t, y) = \frac{y^{\top} \C^{-1}_{t} y}{|y|^{2}}.
\end{align*}
Dividing (\ref{ildo_eqn_4.15}) by $|y|^2$, with $|y| \neq 0$, on both sides, we ascertain
\begin{align*}
    \frac{\de}{\de t} \Lambda(t, y) \leq 2L_{A} +   4L_{V}  \sqrt{\Lambda(t, y) }  - \frac{2}{\beta}\Lambda(t, y),
\end{align*}
where $
    L_{A} := \sup_{y\;; \;|y| \neq 0} \frac{y^{\top} A y}{|y|^2}$. Using generalized Young's inequality ($ ab \leq  a^{2} / (2 \epsilon) + \epsilon b^{2}/2$, where $a, b, \epsilon >0$, with 
$a = \sqrt{\Lambda(s, y)}$, $b = L_{V}$, $\epsilon = 2\beta$), we have
\begin{align*}
 \frac{\de}{\de t} \Lambda(t, y) 
 & \leq 2L_{A} +   4\bigg( \beta L_{V}^{2} +  \frac{\Lambda(t, y) }{4\beta }\bigg)  - \frac{2}{\beta}\Lambda(t, y) 
 = 2L_{A} + 4\beta L^{2}_{V} - \frac{1}{\beta}\Lambda(t, y).
\end{align*}    
Using $e^{\frac{1}{\beta}t}$ as the integrating factor, we get
\begin{align*}
    \frac{\de}{\de t} e^{\frac{1}{\beta}t}\Lambda(t,y) \leq  \big(2L_{A} + 4\beta L_{V}^{2}\big) e^{\frac{1}{\beta}t}.   
\end{align*}
Therefor, 
we obtain
\begin{align}
     e^{\frac{1}{\beta}t}\Lambda(t,y)  \leq   \Lambda(0,y) + 2\beta \big(   L_{A} + 2\beta L_{V}^{2}\big)  \big( e^{\frac{1}{\beta}t} -1\big).
\end{align}
Hence, we finally have
\begin{align}
    \Lambda(t,y)  \leq  \Lambda(0,y)e^{-\frac{1}{\beta}t} + 2\beta \big(   L_{A} + 2\beta L_{V}^{2}\big)  \big(1 - e^{-\frac{1}{\beta}t}\big).
\end{align}
Taking supremum over $y \in \R^d$, with $|y| \neq 0$
and using the fact that $\C^{-1}_{t}$ is symmetric, we get
\begin{align*}
    \lambda^{\C^{-1}}_{\max}(t) \leq  \lambda^{\C^{-1}}_{\max}(0) e^{-\frac{1}{\beta}t}  + 2\beta\big(   L_{A} + 2\beta L_{V}^{2}\big) \big(1 -  e^{-\frac{1}{\beta}t  }  \big).
\end{align*}
Take $g(t) := \lambda^{\C^{-1}}_{\max}(0) e^{-\frac{1}{\beta}t}  + \beta\big(   2L_{A} + \beta L_{V}^{2}\big) \big(1 -  e^{-\frac{1}{\beta}t  }  \big)$, then $g'(t)$ is given by
\begin{align*}
    g'(t) =  - \frac{1}{\beta}\lambda^{\C^{-1}}_{\max}(0) e^{-\frac{1}{\beta}t}  + 2(   L_{A} + 2\beta L_{V}^{2}) e^{-\frac{1}{\beta}t}
    = \left( - \frac{1}{\beta}\lambda^{\C^{-1}}_{\max}(0) + 2(   L_{A} + 2\beta L_{V}^{2}) \right) e^{-\frac{1}{\beta}t}.
\end{align*}
This means that the sign of $g'$ is constant, suggesting that $g(t)$ is a monotonic function which either attend supremum at $t = 0$ or when $t \rightarrow \infty$. In that case, if 
\begin{align}
    2\beta(L_{A} + 2\beta L_{V}^{2}) \geq \lambda^{\C^{-1}}_{\max}(0),
\end{align}
then $
     \lambda^{\C^{-1}}_{\max}(t) \leq ´2\beta\big(   L_{A} + 2\beta L_{V}^{2}\big) $. However, if we choose initial distribution $\mu_0$ such that
\begin{align}
     2\beta(L_{A} + 2\beta L_{V}^{2}) \leq \lambda^{\C^{-1}}_{\max}(0),
\end{align}
then $
     \lambda^{\C^{-1}}_{\max}(t) \leq ´\lambda^{\C^{-1}}_{\max}(0)$. Therefore, we have
\begin{align*}
     \lambda^{\C^{-1}}_{\max}(t) \leq ´\max \bigg( 2\beta\big(   L_{A} + 2\beta L_{V}^{2}\big), \;\lambda^{\C^{-1}}_{\max}(0)\bigg).
\end{align*}
Note that $\lambda^{\C^{-1}}_{\max}(t)  =  1/\lambda^{\C}_{\min}(t) $. This implies 
\begin{align}
    \lambda^{\C}_{\min}(t) \geq ´\min\bigg(\frac{1}{2\beta\big(   L_{A} + 2\beta L_{V}^{2}\big)}, \lambda^{\C}_{\min}(0)\bigg).
\end{align}
\end{proof}

\subsection{Proof of Theorem~\ref{main_thrm_exp_conv}}

Before we proceed with the proof of Theorem~\ref{main_thrm_exp_conv}, we need the following result, which  
is direct consequence of \cite[Theorem~0.1]{cattiaux_guillin2022functional_supplement}. In order to state the result, we need the following notation: We denote the Poincar\'{e} constant for $\mu$ by $C_{P}(\mu)$.  
Let $\lambda_{\ls}(\mu_A)$ denote the log-Sobolev constant of $\mu_A(\de x) = e^{-x^{\top}Ax/2}\de x$. We denote $C_{\ls}(\mu_A) = 2/\lambda_{\ls}(\mu_A) $.

\begin{lemma}
    Let Assumption~\ref{first_assum_on_U} hold. Then, the measure $ \mu =  \frac{1}{Z}e^{-U(x)} \de x$ satisfies log-Sobolev inequality, i.e., for all $\nu \ll \mu $
    \begin{align*}
        \mathrm{H}(\nu \;| \;\mu) \leq \frac{1}{2\lambda_{\mathrm{LS}}}\int_{\R^d} \bigg|\nabla \ln\bigg(\frac{\de \nu}{\de \mu}\bigg)\bigg|^2 \de \nu, 
    \end{align*}
    where  log-Sobolev constant satisfies 
    \begin{align}
    \frac{1}{2 \lambda_{\mathrm{LS}}}\leq  \frac{1}{4}\big(K_1(1 + a_1^{-1})(1 + a_2^{-1})  + K_2\big((1 + a_2)(1+ a_1^{-1})/4 +  a_1^2 /2  \big)\big),
    \end{align}
    where $K_1 = C_{\mathrm{LS}}^A$, $K_2 = C_{\text{P}}(\mu) (2 + \la V, \mu_A\ra + L_U C_{\text{P}}(\mu)C_{\mathrm{LS}}(\mu_A)$, $a_1 = \frac{1}{K_2^{1/3}}\bigg( K_1 + \frac{K_2}{2}\bigg)^{2/3}$ and $ a_2 = 2 \sqrt{\frac{K_1}{K_2}} $.
\end{lemma}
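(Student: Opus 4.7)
The plan is to reduce the lemma to a direct application of \cite[Theorem~0.1]{cattiaux_guillin2022functional_supplement}, which gives a log-Sobolev inequality (LSI) for a measure of the form $\mu \propto e^{-V}\mu_A$ when $\mu_A$ satisfies an LSI, $V$ is Lipschitz, and $\mu$ itself satisfies a Poincaré inequality. Under Assumption~\ref{first_assum_on_U}, the decomposition $U = \tfrac{1}{2}x^{\top}Ax + V$ is exactly of this form, with $\mu_A(\de x) \propto e^{-x^{\top}Ax/2}\de x$ playing the role of the reference measure. Hence the entire task is to verify the three hypotheses and read off the constants $K_1,K_2,a_1,a_2$.

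First I would handle the LSI for $\mu_A$. Since the potential $\tfrac{1}{2}x^{\top}Ax$ has Hessian $A \succeq \ell_A I$ with $\ell_A>0$, the Bakry-\'Emery criterion yields an LSI for $\mu_A$ with $\lambda_{\ls}(\mu_A) \geq \ell_A$. This identifies the constant $K_1 = C_{\ls}(\mu_A) = 2/\lambda_{\ls}(\mu_A)$ and immediately gives the upper bound $K_1 \leq 2/\ell_A$. In particular, $\mu_A$ also satisfies a Poincar\'e inequality with the same constant (up to a factor), which will be used implicitly inside the Cattiaux--Guillin bound.

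Next I would establish that $\mu$ itself satisfies a Poincar\'e inequality so that $C_P(\mu)$, and therefore $K_2$, is finite. This is the main obstacle, because $V$ is only assumed Lipschitz and not bounded, so the elementary Holley--Stroock perturbation result does not apply. The natural route is to invoke a Lipschitz-perturbation result for Poincar\'e inequalities (of the same Cattiaux--Guillin flavour as the LSI statement being cited): starting from the LSI of $\mu_A$, one transports it into a Poincar\'e bound on $\mu$ whose constant depends only on $C_{\ls}(\mu_A)$, the Lipschitz constant $L_V$, and the average $\langle V,\mu_A\rangle$. Tracking these dependencies is exactly what produces the structure $K_2 = C_P(\mu)\bigl(2 + \langle V,\mu_A\rangle + L_U C_P(\mu) C_{\ls}(\mu_A)\bigr)$ appearing in the statement.

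Finally, with $K_1$ and $K_2$ both identified and finite, I would apply \cite[Theorem~0.1]{cattiaux_guillin2022functional_supplement}, which produces an LSI for $\mu$ whose constant admits, for any free parameters $a_1,a_2>0$, an upper bound of the form
\[
\tfrac{1}{2\lambda_{\ls}} \leq \tfrac{1}{4}\bigl(K_1(1+a_1^{-1})(1+a_2^{-1}) + K_2\bigl((1+a_2)(1+a_1^{-1})/4 + a_1^2/2\bigr)\bigr).
\]
The displayed choices $a_1 = K_2^{-1/3}(K_1 + K_2/2)^{2/3}$ and $a_2 = 2\sqrt{K_1/K_2}$ are obtained by differentiating in $a_1$ and $a_2$ and solving the stationarity equations; I would verify these by a direct calculus computation and substitute back to obtain the stated closed-form bound, thereby completing the proof. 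The only non-routine pieces are the Poincar\'e step for $\mu$ and the precise bookkeeping of constants -- everything else is either the Bakry-\'Emery criterion or the algebra of optimizing the two-parameter bound from the cited theorem.
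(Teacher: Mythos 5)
Your proposal follows essentially the same route as the paper: verify a Poincar\'e inequality for $\mu$, apply \cite[Theorem~0.1]{cattiaux_guillin2022functional_supplement} to the decomposition $U=\tfrac{1}{2}x^{\top}Ax+V$ with reference Gaussian $\mu_A$, and optimize the resulting two-parameter bound over $a_1,a_2$. One small correction to your narrative: the structure of $K_2 = C_{\text{P}}(\mu)\big(2 + \la V,\mu_A\ra + L_U C_{\text{P}}(\mu)C_{\mathrm{LS}}(\mu_A)\big)$ is read off verbatim from the cited Cattiaux--Guillin theorem, not produced while establishing the Poincar\'e inequality for $\mu$ --- the paper obtains $C_{\text{P}}(\mu)<\infty$ via the Lyapunov-condition result of \cite{bakry2008simple} rather than the Lipschitz-perturbation route you sketch; both are valid, and that step serves only to ensure finiteness of $C_{\text{P}}(\mu)$, not to determine the form of $K_2$.
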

\begin{proof}
First note that, thanks to Assumption~\ref{first_assum_on_U}, $\mu$ satisfies Poincar\'{e} inequality \cite{bakry2008simple}. 
Note that $\mu$ satisfies Poincare inequality with constant $C_{\text{P}}(\mu)$. Denote $C_{\ls}(\mu) = 2/\lambda_{\ls}$.
    From \cite[Theorem~0.1]{cattiaux_guillin2022functional_supplement}, we have for all $a_1>0$ and $a_2>0$
    \begin{align*}
        C_{\ls}(\mu) \leq  (1 + a_1^{-1})(1 + a_2^{-1}) C_{\ls}^A + C_{\text{P}}(\mu) (2 + \la V, \mu_A\ra + L_U C_{\text{P}}(\mu)C_{\ls}(\mu_A)\big((1 + a_2)(1+ a_1^{-1})/4 +  a_1^2 /2  \big).
    \end{align*}
Consider a function $f(a_1, a_2) = K_1(1 + a_1^{-1})(1 + a_2^{-1})  + K_2\big((1 + a_2)(1+ a_1^{-1})/4 +  a_1^2 /2  \big)$ for some $K_1, K_2 >0$, then $f $ attains its minimum at 
\begin{align*}
    a_1 = \frac{1}{K_2^{1/3}}\bigg( K_1 + \frac{K_2}{2}\bigg)^{2/3}, \quad
    a_2 = 2 \sqrt{\frac{K_1}{K_2}},
\end{align*}
which completes the proof.
\end{proof}

\begin{proof}[Proof of Theorem~\ref{main_thrm_exp_conv}]
Consider the Fokker-Plank equation governing the probability distribution of non-linear Langevin dynamics (\ref{el_sde_mf}) as
\begin{align}
    \frac{\partial \rho}{\partial t}(t,x) = \nabla\cdot (\rho(t,x)\C(\mu_{t})\nabla U(x)) + \frac{1}{\beta}\nabla \cdot (\C(\mu_{t}) \nabla\rho(t,x)).     
\end{align}
We can write potential function as $
    U(x) = -\frac{1}{\beta }\ln \pi(x) - \frac{1}{\beta }\ln Z
$, $Z = \int_{\R^d}e^{- \beta U(x) } dx$. This results in the following PDE in divergence form:
\begin{align}\label{eldo_eqn_5.3}
     \frac{\partial \rho}{\partial t}(t,x) &= -\frac{1}{\beta }\nabla\cdot \big(\rho(t,x)\C(\mu_{t})\nabla\ln(\pi(x))\big) + \frac{1}{\beta }\nabla \cdot  \big(\rho(t,x)\C(\mu_{t})\nabla \ln(\rho(t,x))\big) \nonumber 
     \\  & 
     = \frac{1}{\beta }\nabla \cdot \bigg( \rho(t,x) \C(\mu_{t})\nabla \ln\Big( \frac{\rho(t,x)}{\pi(x)}\Big)\bigg).     
\end{align}

We denote $\rho_{t} := \rho(t, \cdot)$. Consider the  relative Boltzmann-Shanon entropy (also known as KL-divergence) in terms of density $\rho$ with respect to $\pi$ :
\begin{align}
    \h(\rho_{t}\; | \; \pi) = \int_{\mathbb{R}^{d}} \rho_{t}\ln\bigg(\frac{\rho_{t}}{\pi}\bigg) \de x.
\end{align}
We will discuss below the behaviour of above functional along the solution of Fokker-Plank PDE (\ref{eldo_eqn_5.3}):
\begin{align*}
    \frac{\de }{\de t}\h(\rho_t
    \; | \;\pi) & = \int_{\mathbb{R}^{d}} \frac{\partial }{\partial t}\big(\rho_{t}(x) \ln (\rho_{t}(x))\big) - \frac{\partial }{\partial t}\big(\rho_{t} \ln (\pi(x))\big) \de x
    \\& =
    \int_{\mathbb{R}^{d}} \Big(\frac{\partial }{\partial t}\rho_{t}(x)  + \ln(\rho_{t}(x))  \frac{\partial }{\partial t}\rho_{t}(x)  - \ln(\pi(x)) \frac{\partial }{\partial t}\rho_{t}(x) \Big) \de x 
     = \int_{\mathbb{R}^{d}} \Big(\ln\bigg(\frac{\rho_{t}(x)}{\pi(x)}\bigg)\frac{\partial }{\partial t}\rho_{t}(x) \Big) \de x,
\end{align*}
where we have used the conservation of mass with time resulting in $\frac{\partial}{\partial t}\int_{\mathbb{R}^{d}}\rho_{t}(x) dx = 0$. 
Using (\ref{eldo_eqn_5.3}) and integration by parts, we obtain
\begin{align}
 \frac{\de }{\de t}\h(\rho_{t}\; | \;\pi) =    \int_{\mathbb{R}^{d}}  -\frac{1}{\beta}  \bigg( \nabla \ln\Big( \frac{\rho_{t}(x)}{\pi(x)}\Big) \cdot \C(\mu_{t})\nabla \ln\Big( \frac{\rho_{t}(x)}{\pi(x)}\Big)\bigg) \rho_{t}(x) \de x. 
\end{align}
Using log-Sobolev inequality, we get 
\begin{align*}
    -\int_{\R^{d}}\bigg( \nabla \ln\Big( \frac{\rho_{t}(x)}{\pi(x)}\Big) \cdot \C(\mu_{t})\nabla \ln\Big( \frac{\rho_{t}(x)}{\pi(x)}\Big)\bigg) \rho_{t}(x) \de x
   & \leq 
  - \lambda_{\min}^{\C}(t) \int_{\R^{d}} \Big| \nabla \ln\Big( \frac{\rho_{t}(x)}{\pi(x)}\Big)\Big|^{2} \rho_{t}(x) \de x
  \\ & 
  \leq  
  - \frac{\lambda_{\min}^{\C}(t)}{2\lambda_{\ls}} \h(\rho_{t}\; | \;\pi).
\end{align*}
Therefore,
\begin{align*}
     \frac{\de }{\de t}\h(\rho_{t}\; | \;\pi) \leq - \frac{\lambda_{\min}^{\C}(t)}{2\lambda_{\ls}\beta} \h(\rho_{t}\; | \;\pi)
\end{align*}
which on applying Lemma~\ref{min_eigen_bound_lemma}, we ascertain
\begin{align}
 \frac{\de }{\de t}\h(\rho_{t}\; | \;\pi) \leq
 -\min\bigg(\frac{1}{\beta\big(   2L_{A} + \beta L_{V}^{2}\big)}, \lambda^{\C}_{\min}(0)\bigg) \frac{1}{2\lambda_{\ls}\beta}\h(\rho_{t}\; | \;\pi).
\end{align}
\end{proof}

\subsection{Proof of Lemma~\ref{max_eigen_bound_lemma}} 


\begin{proof}[Proof of Lemma~\ref{max_eigen_bound_lemma}]
From (\ref{n_langevin_eqn_2.3}), we have
 \begin{align*}
        \de \C_t &= 
    - \E[ (X(t) - \E X(t)) (\nabla U(X(t)) - \E \nabla U(X(t)))^{\top}] \C_{t} \de t   
    \\  & \quad 
    - \C_{t}\E[ (\nabla U(X(t)) - \E \nabla U(X(t)))(X(t) - \E X(t))^{\top}]\de t + \frac{2}{\beta}\C_{t}\de t. \numberthis 
    \end{align*}
which due to Assumption~\ref{first_assum_on_U} gives
\begin{align*}
     \frac{\de}{\de t} \C_t &=  -2 \C_t A \C_t  -  \E[Z_t P_t^{\top}] \C_{t}   
    - \C_{t}\E[ P_t Z_t^{\top}] + \frac{2}{\beta}\C_{t}, 
 \end{align*}
where $Z_t = X(t) - \E X(t)$ and $ P_t = \nabla V(X(t)) - \E\nabla V(X(t))$. 

For $|y|^{2} = 1$, we have
\begin{align*}
    \frac{\de }{\de t} y^{\top} \C_{t} y & =  -2y^{\top} \C_t A \C_t y  -  y^{\top}\E[Z_t P_t^{\top}] \C_{t} y   
    - y^{\top}\C_{t}\E[ P_t Z_t^{\top}]y + \frac{2}{\beta}y^{\top}\C_{t}y 
    \\  &  
    \leq -2\ell_{A}y^{\top} \C_{t}^{2} y  -  2y^{\top}\E[Z_t P_t^{\top}] \C_{t} y   
    + \frac{2}{\beta}y^{\top}\C_{t}y.   \numberthis \label{n_langevin_2.23}
\end{align*}
First consider the second term on the right hand side of the inequality:
\begin{align*}
    - 2y^{\top} \E[Z_t P_t^{\top}]\C_{t}y & = -2\E[y^{\top} Z_t P_t^{\top}\C_{t} y ] \leq 2 (\E |y^{\top} Z_t|^{2})^{1/2} (\E| P_t^{\top} \C_{t}y|^{2})^{1/2} 
    \\  & 
    =  2(y^{\top}\C_{t} y)^{1/2} ( y^{\top} \C_{t} \E [P_t P_t^{\top}] \C_{t}y )^{1/2}.  
\end{align*}
Due to our assumption on $V$, there exists a $K_{V}>0$ such that following holds for all $t >0$ and $z\in \R^d$:
\begin{align}
    z^{\top} P_t P_t^{\top} z \leq  K_{V} |z|^{2}.
\end{align}
Explicit $K_V$ calculation $
    z^{\top} P_t \leq  2 L_V |z| $
therefore $K_V = 4L_V^2$.

An application of Young's inequality gives
\begin{align*}
    - 2y^{\top} \E[Z_t P_t^{\top}]\C_{t}y &
    \leq  2\sqrt{K_{V}}(y^{\top}\C_{t} y)^{1/2} ( y^{\top} \C_{t}^{2}y )^{1/2}
    \leq  \frac{\ell_{A}}{3}y^{\top} \C_{t}^{2}y +  \frac{3 K_{V}}{ \ell_{A}}y^{\top} \C_{t}y.  
\end{align*}
Hence, using above inequality in (\ref{n_langevin_2.23}), we get
\begin{align*}
     \frac{\de }{\de t} y^{\top} \C_{t} y & 
    \leq -\frac{5}{3}\ell_{A}y^{\top} \C_{t}^{2} y + \frac{3 K_{V}^{2}}{ \ell_{A}}y^{\top} \C_{t}y  
    + \frac{2}{\beta}y^{\top}\C_{t}y. \numberthis \label{n_langevin_2.25}
\end{align*}

Let us now  analyze the term $y^{\top} \C_{t}^{2} y $ appearing in (\ref{n_langevin_2.23}). We have already proven uniform in time lower bound on eigenvalues of $\C_{t}$. We write the eigenvalue decomposition of matrix $\C_t$ as $O_{t}D_{t}O_{t}^{\top}$, where $O_{t}$ is orthogonal matrix and $D_{t}$ is a diagonal matrix whose diagonals are given by eigenvalues of $\C_t$. 
This implies we can write
\begin{align}
    y^{\top}\C_{t}^{2} y =  y^{\top}O_t D_{t}^{2} O_{t}^{\top} y. 
\end{align}
We denote $v_{t} = O_{t}^{\top} y $ and it is clear that $|v_{t}|^{2} = y^{\top} O_{t}^{\top} O_{t} y = 1$. The calculations below follow due to above properties:
\begin{align*}
    (y^{\top} \C_{t} y)^{2} & =  (v_{t}^{\top} D_{t} v_{t})^{2} =  \bigg(\sum_{i}^{d} \lambda^{\C}_{i}(t) (v_{t}^{i})^{2}\bigg)^{2} 
    \leq \sum_{i,j=1}^{d} \lambda^{\C}_{i}(t) (v_{t}^{i})^{2} \lambda^{\C}_{j}(t) (v_{t}^{j})^{2},
\end{align*}
where $\lambda^{\C}_{i}(t)$ denotes the $ii$-th component of $D_{t}$ and $v_{t}^{i}$ represents the $i$-th component of $v_{t}$. On applying Young's inequality, we arrive at the following:
\begin{align*}
   (y^{\top} \C_{t} y)^{2} &\leq   \frac{1}{2}\sum_{i,j=1}^{d} \big[ \big(\lambda^{\C}_{i}(t) \big)^{2}  + \big(\lambda^{\C}_{j}(t) \big)^{2}\big]  (v_{t}^{i})^{2} (v_{t}^{j})^{2}
   \\ &
   = \frac{1}{2}\sum_{i,j=1}^{d} \big(\lambda^{\C}_{i}(t) \big)^{2}(v_{t}^{i})^{2} (v_{t}^{j})^{2} + \frac{1}{2}\sum_{i,j=1}^{d} \big(\lambda^{\C}_{j}(t) \big)^{2}(v_{t}^{j})^{2} (v_{t}^{i})^{2}
   \\  & 
   =\sum_{i}^{d} \big(\lambda^{\C}_{i}(t) \big)^{2}(v_{t}^{i})^{2}
   = v_{t}^{\top} D_{t}^{2} v_{t}  
   = y^{\top} O_{t} D_{t}^{2} O_{t}^{\top}y 
   = y^{\top} \C_{t}^{2} y. \numberthis 
\end{align*}
Consequently, we have obtained that
\begin{align}
    (y^{\top} \C_{t} y)^{2} \leq y^{\top} \C_{t}^{2} y,
\end{align}
for all $y$ such that $|y| =1$. Using the estimate, $
    -(y^{\top} \C_{t} y)^{2} \geq  - y^{\top} \C_{t}^{2} y 
$
in (\ref{n_langevin_2.25}), we get
\begin{align*}
     \frac{\de }{\de t} y^{\top} \C_{t} y & 
    \leq -\frac{5}{3}\ell_{A}(y^{\top} \C_{t} y)^{2} + \frac{3 K_{V}}{ \ell_{A}}y^{\top} \C_{t}y  
    + \frac{2}{\beta}y^{\top}\C_{t}y. \numberthis 
\end{align*}
Therefore, we have
\begin{align*}
    \frac{\de }{\de t} y^{\top} \C_{t} y & 
    \leq - \ell_{A}(y^{\top} \C_{t} y)^{2} + 3^{3}\bigg( \frac{ K_{V}}{ \ell_{A}}\bigg)^{2} + 
    \frac{3}{\beta^{2}},
\end{align*}
where we have utilized Young's inequality as
\begin{align*}
    \frac{3 K_{V}}{4 \ell_{A}}y^{\top} \C_{t}y  & \leq 
    \frac{1}{3}(y^{\top} \C_{t} y)^{2} +  3^{3}\bigg( \frac{ K_{V}}{ \ell_{A}}\bigg)^{2},
    \\  
    \frac{2}{\beta} y^{\top}\C_{t} y & \leq  \frac{1}{3} (y^{\top}\C_{t}y)^{2}  +  \frac{3}{\beta^{2}}.
 \end{align*}
Therefore, also using $ -x^2 \leq -x +1  $, we get
\begin{align*}
     \frac{\de }{\de t} y^{\top} \C_{t} y & 
    \leq - \ell_{A}(y^{\top} \C_{t} y)  + \ell_A + 3^{3}\bigg( \frac{ K_{V}}{ \ell_{A}}\bigg)^{2} + 
    \frac{3}{\beta^{2}}
\end{align*}
Denoting $
    \tilde{\Lambda} (t,y) = y^{\top}\C_{t} y
$
and using $e^{\ell_{A}t}$ as integrating factor, we get 
\begin{align*}
    \frac{\de}{\de t} e^{\ell_{A} t }\tilde{\Lambda }(t,y) \leq  \Bigg(\ell_A + \bigg(\frac{3}{4}\bigg)^{3}\bigg( \frac{ K_{V}}{ \ell_{A}}\bigg)^{2} + 
    \frac{3}{\beta^{2}}\Bigg) e^{\ell_{A}t}.
\end{align*}
Therefore, we have
\begin{align*}
    \tilde{\Lambda}(t,y) \leq \tilde{\Lambda}(0,y)e^{-\ell_{A} t } + \frac{1}{\ell_{A}}\Bigg(\ell_A + \bigg(\frac{3}{4}\bigg)^{3}\bigg( \frac{ K_{V}}{ \ell_{A}}\bigg)^{2} + 
    \frac{3}{\beta^{2}}\Bigg) (1 - e^{-\ell_{A}t}).
\end{align*}
Taking supremum over $y$, we finally get our estimate 
\begin{align}
\lambda^{\C}_{\max}(t) \leq \lambda^{\C}_{\max}(0)e^{-\ell_{A} t } + \frac{1}{\ell_{A}}\Bigg(\bigg(\frac{3}{4}\bigg)^{3}\bigg( \frac{ K_{V}}{ \ell_{A}}\bigg)^{2} + 
    \frac{3}{\beta^{2}}\Bigg) (1 - e^{-\ell_{A}t}).
\end{align}
\end{proof}

\subsection{Proof of Theorem~\ref{lem:X_bounded_moments}}\label{unif_mome_bound}

\begin{proof}[Proof of Theorem~\ref{lem:X_bounded_moments}]
Below we prove the result for $l\geq 2$, and after which the result for $ 0<l<2$ follows from the Cauchy-Bunyakovsky-Schwartz inequality. 

Using Ito's formula for $U^{l}(x)$, with $l \geq 2$, we have
\begin{align*}
    \de U^{l}(X(t)) & = - l U^{l-1}(X(t)) \la\nabla  U(X(t)) \cdot \C_{t} \nabla U(X(t))\ra \de t + \frac{l}{\beta}U^{l-1}(X(t)) \trace\big[\nabla^{2}U(X(t)) \C_{t}\big] \de t 
    \\  & \quad 
    + \frac{l}{\beta}(l-1) U^{l-2}(X(t))\la\nabla U(X(t)) \cdot \C_t \nabla U(X(t))\ra \de t + \sqrt{\frac{2}{\beta}} l U^{l-1} \la \nabla U(X(t)) \cdot \sqrt{\C_t} \de W(t)\ra. 
\end{align*}

Due to boundedness of second derivatives of $U$ and uniform in time bounds on $\lambda_{\min}^{\C}(t)$ and $\lambda_{\max}^{\C}(t)$ (see Lemma~\ref{min_eigen_bound_lemma} and Lemma~\ref{max_eigen_bound_lemma}), we have
\begin{align}
    \trace\big[\nabla^{2}U(X(t)) \C_{t}\big]  &\leq  B, \\ 
    -\la \nabla U(X(t))\cdot \C_t \nabla U(X(t))\ra & \leq -\ell_{\C} |\nabla U(X(t))|^2,  \\
    \la \nabla U(X(t))\cdot \C_t \nabla U(X(t))\ra & \leq L_{\C} |\nabla U(X(t))|^2,
\end{align}
where $B >0$ is a constant independent of $t$, and $\ell_\C$ and $L_\C$ are from (\ref{notation_l_sigma}) and (\ref{notation_L_sigma}), respectively. This leads to the following:
\begin{align*}
     \de U^{l}(X(t)) & = - l \ell_\C U^{l-1}(X(t)) |\nabla U(X(t))|^2  \de t + \frac{l}{\beta}B U^{l-1}(X(t))  \de t 
    \\  & \quad 
    + \frac{l}{\beta}(l-1)L_\C U^{l-2}(X(t))|\nabla U(X(t))|^2  \de t + \sqrt{\frac{2}{\beta}} l U^{l-1} \la \nabla U(X(t)) \cdot \sqrt{\C_t} \de W(t)\ra
\end{align*}
which on taking expectation on both sides gives
\begin{align}
     \de \E U^{l}(X(t)) & = - l \ell_\C \E U^{l-1}(X(t)) |\nabla U(X(t))|^2  \de t + \frac{l}{\beta}B \E U^{l-1}(X(t))  \de t 
  \nonumber  \\  & \quad 
    + \frac{l}{\beta}(l-1)L_\C \E U^{l-2}(X(t))|\nabla U(X(t))|^2  \de t.
\end{align}

Next, we derive upper and lower bounds for $|\nabla U(X(t))|^2$ in terms of $U(x)$. To this end, since $V$ by Assumption~\ref{first_assum_on_U} is Lipschitz continuous, we have 
$ V(x) \leq V(0) + L_V |x|$. Therefore,
\begin{align*}
    U(x) & \leq \frac{1}{2}L_{A} |x|^2 + L_V |x| + V(0),
\end{align*}
which, upon using Young's inequality $L_V|x| \leq  \frac{1}{2}|x|^2 + \frac{1}{2}L_V^2$, gives
\begin{align*}
    U(x) &\leq  \frac{1}{2}(L_{A} +1)|x|^2 + \frac{1}{2}L_V^2 + V(0), 
\end{align*}
This implies 
\begin{align}
    |x|^2 \geq  \frac{2}{L_A + 1}U(x) - \frac{1}{L_A + 1}\big(L_V^2 + 2V(0)\big). \label{estimate_on_x}
 \end{align}
 Due to our assumption on $U$, i.e., Assumption~\ref{first_assum_on_U}, and (\ref{estimate_on_x}), we obtain
\begin{align}
    |\nabla U(x)|^{2} &=  |Ax + \nabla V(x)|^2 \geq  \frac{1}{2}|Ax|^2 - |\nabla V(x)|^2 \geq \frac{\ell^2_A}{2} |x|^2 - L_V^2 \nonumber
    \\   &  \geq  \frac{ \ell_A^2}{L_A + 1} U(x) + \ell_A^2\frac{2V(0) + L_V^2}{2(L_A + 1)}- L_V^2 .  \label{eqn_new_eld_2.33}
\end{align} 
To derive the upper bound we again using Lipschitz continuity of $V$, which gives that
\begin{align*}
    U(x) \geq \frac{1}{2}\ell_A^2|x|^2 + V(0) - L_V|x| \geq \frac{1}{4}\ell_A^2|x|^2 - \frac{L_V^2}{\ell_A^2} + V(0),
\end{align*}
where we have used Young's inequality, i.e., $ L_V |x| \leq \ell_A^2|x|^2/4 + L^2_V/\ell_A^2 $, in last step. Hence,
\begin{align*}
    |x|^2 \leq \frac{4}{\ell_A^2}U(x)  + \frac{4L_V^2}{\ell_A^4} - \frac{4}{\ell_A^2}V(0).
\end{align*}
This implies
\begin{align}
    |\nabla U(x)|^2 = |Ax + \nabla V(x)|^2 \leq 2 L_A^2 |x|^2 + 2 L_V^2 \leq  \frac{8 L_A^2}{\ell_A^2}U(x)  + \frac{8 L_A^2 L_V^2}{\ell_A^4} - \frac{8L_A^2}{\ell_A^2}V(0) + 2L_V^2. \label{eqn_new_eld_2.35}
\end{align}

Using (\ref{eqn_new_eld_2.33}) and (\ref{eqn_new_eld_2.35}), we have
\begin{align*}
    \de \E U^{l}(X(t)) & \leq - l  \ell_{\C}\frac{ \ell_A^2}{L_A + 1} \E U^{l}(X(t)) \de t - l\ell_{\C}\ell^2_A \frac{2 V(0) + L_V^2}{2(L_A + 1)}\E U^{l-1}(X(t))\de t \\  & \quad + l\ell_{\C}L^2_V\E U^{l-1}(X(t)) \de t + \frac{l}{\beta}B \E U^{l-1}(X(t))  \de t 
        + \frac{l}{\beta}(l-1)L_{\C} \frac{8L_A^2}{\ell_A^2}\E U^{l-1}(X(t)) \de t \\  & \quad  + \frac{l}{\beta}(l-1)L_{\C}\Big(\frac{8 L_A^2 L_V^2}{\ell_A^4} - \frac{8L_A^2}{\ell_A^2}V(0) + 2L_V^2  \Big) \E U^{l -2}(X(t)) \de t 
        \\ &
        \leq -\eta_1 \E U^{l }(X(t)) \de t + \eta_2 \E U^{l -1}(X(t)) \de t + \eta_3 \E U^{l-2}(X(t)) \de t,
\end{align*}
where 
\begin{align}
    \eta_1 &:= l  \ell_{\C}\frac{ \ell_A^2}{L_A + 1}, \\
    \eta_2 &:=  l\ell_{\C}L^2_V + \frac{l}{\beta}B + \frac{l}{\beta}(l-1)L_{\C} \frac{8L_A^2}{\ell_A^2}, \\
    \eta_3 &:= \frac{l}{\beta}(l-1)L_{\C}\Big(\frac{8 L_A^2 L_V^2}{\ell_A^4} - \frac{8L_A^2}{\ell_A^2}V(0) + 2L_V^2  \Big).
\end{align}
Using Young's inequality, we have
\begin{align*}
    \eta_2 U^{l-1}(x) &\leq \frac{\eta_1}{3} U^{l}(x) + \Big(\frac{2 \eta_1 ( l -1) }{3 l}\Big)^{-(l-1)/l}\frac{\eta_2^l}{2 l}  ,\\ 
    \eta_3 U^{l-2}(x) & \leq \frac{\eta_1}{3} U^{l}(x)   + \Big(\frac{2 \eta_1 ( l -2) }{3 l}\Big)^{-(l-2)/l}\frac{\eta_3^{l/2}}{ l},
\end{align*}
which results in 
\begin{align*}
    \de \E U^{l}(X(t)) 
    \leq - \frac{\eta_1}{3} \E U^{l}(X(t))\de t  +  \eta_4 \de t,
\end{align*}
where $\eta_4$ is independent of $t$ and given by
\begin{align*}
   \eta_4 =  \Big(\frac{2 \eta_1 ( l -1) }{3 l}\Big)^{-(l-1)/l}\frac{\eta_2^l}{2 l}+ \Big(\frac{2 \eta_1 ( l -2) }{3 l}\Big)^{-(l-2)/l}\frac{\eta_3^{l/2}}{ l}.
\end{align*}
This implies
\begin{align*}
    \E U(X(t)) \leq  \E U(X(0))e^{-\frac{\eta_1}{3}t} + 3 \frac{\eta_4}{\eta_1} (1 - e^{-\frac{\eta_1}{3}t}). 
\end{align*}

This implies uniform in time moment bound, i.e.,
\begin{align*}
    \E |X(t)|^{2 l} \leq C_{M},
\end{align*}
where $C_{M}>0$ is a constant independent of $t$. 
\end{proof}

\section{Particle approximation and propagation of chaos}\label{sec:particle_approx}

For a process $X(t)$, drive by the dynamics \eqref{eq:ek_sdes}, the time marginal law, denoted by $\mu_{t} := \mathcal{L}^{X}_{t}$,
satisfies the non-linear Fokker-Planck PDE
\begin{align}\label{pde_mf}
    \frac{\partial \mu_t}{\partial t}(t,x) =  \la \nabla_{x} \mu_t \cdot \Sigma(\mu_t)\nabla U(x)\ra &+ \frac{1}{\beta  } \trace(\C(\mu_{t})\nabla^2 \mu_t),\;\; (t,x) \in [0,T]\times \mathbb{R}^{d}.
 \end{align}
In this section, we prove that we can use a particle approximation to approximate this time marginal law. By the contraction result in Theorem~\ref{main_thrm_exp_conv}, this means that we can use a particle approximation to approximate $\pi(x)\de x$, with $\pi$ given in \eqref{eq:pi}.

To this end, let $N \in \mathbb{N}$ denote the number of particles and let $X^{i,N}(t)$ represent the position of the $i$-th particle at time $t$.  
Let $\mathcal{E}^{N}(t)$ represent the empirical measure defined as
\begin{align}
    \mathcal{E}^{N}(t) = \frac{1}{N}\sum_{j=1}^{N}\delta_{X^{j,N}(t)}.
\end{align}
 Consider the $d \times N$ deviation matrix 
 \[
 Q^{N}(t) = [X^{1,N}(t)-\M^N(t),...,X^{N,N}(t)-\M^N(t)]
 \]
 with $\M^N_t$ being the ensemble mean given by $
 \M^N_t:=   \M(\cE^N(t)) = \frac{1}{N}\sum_{j=1}^N X^{j,N}(t)$. 
The ensemble covariance, denoted as $\C^{N}_t$, is defined as
\begin{equation}
   \C^{N}_t :=  \C(\mathcal{E}^{N}(t)) = \frac{1}{N} Q^{N}(t) Q^{N}(t)^{\top}.
    \label{non_langevin_ips_ensemble_Covariance}
\end{equation}
One possible particle approximation to the mean-field limit is given by the following SDEs governing the interacting particle system:
\begin{align}
    \de X^{i,N}(t) = - \C(\mathcal{E}^{N}(t))\nabla U(X^{i,N}(t))\de t + \sqrt{\frac{2}{\beta}}\sqrt{\C(\cE^{N}(t))}\de W^i(t),
    \label{non_langevin_sde_strong_ips}
\end{align}
where $(W^{i}(t))_{t \geq 0}$, $i =1, \dots, N,$ denote standard $d$-dimensional Wiener processes.
The strong convergence of (\ref{non_langevin_sde_strong_ips}) to the mean-field limit (\ref{eq:ek_sdes}) is shown in \cite{ding2021ensemble_sampler} and \cite{vaes2024sharpchaos} in linear and non-linear setting, respectively. However, an implementation of (\ref{non_langevin_sde_strong_ips}) requires the computation of the square root of a $d\times d$ matrix which may be computationally expensive.
To overcome this,  we first note that 
for the purpose of sampling and computation of ergodic averages, a weak sense approximation will suffice. To this end, inspired from \cite{garbuno2020affine}, we consider a system of SDEs which approximates mean-field SDEs in large particle limit, but in weak-sense. 

Let $(B^{i}(t))_{t \geq 0}$, $i = 1,\dots,N$, be standard $N$-dimensional Wiener processes. We consider the following SDEs driving the interacting particle system:
\begin{align}
    \de X^{i,N}(t) = - \C(\mathcal{E}^{N}(t))\nabla U(X^{i,N}(t))\de t + \sqrt{\frac{2}{\beta N}}Q^{N}(t)\de B^i(t),
    \label{non_langevin_sde_weak_ips}
\end{align}
where $X^{i,N}(t)  \in \mathbb{R}^{d}$ denotes the position of $i-$th particle at time $t$.

\begin{remark}[Derivative-free approach]\label{remark_3.1_deri_free} Using the following approximate first-order linearization \cite{iglesias2013ensemble, schillings2017analysis_enkf}:
\begin{align*}
U(X^{j,N}(t)) &\approx U(X^{i,N}(t)) + \la (X^{j,N}(t) - X^{i,N}(t)) \cdot \nabla U(X^{i,N}(t))\ra,   \\
U(X^{k,N}(t)) &\approx U(X^{i,N}(t)) + \la (X^{k,N}(t) - X^{i,N}(t)) \cdot \nabla U(X^{i,N}(t))\ra,  
\end{align*}
we get $
   U(X^{j,N}(t))  - \bar{U}^{N}(t) \approx  \la (X^{j,N}(t) - \M^{N}(t)) \cdot \nabla U(X^{i,N}(t))\ra
  $, where $\bar{U}^{N}(t) := \frac{1}{N}\sum_{k=1}^{N}U(X^{k,N}(t))$. The above, with some more computations, lead to the following derivative-free dynamics:
  \begin{align}
  \de Z^{i,N}(t) = -\frac{1}{N}\sum_{j=1}^{N}(Z^{j,N}(t) - \M^N(t)) \big(U(Z^{j,N}(t))  - \bar{U}^{N}(t)\big)\de t + \sqrt{\frac{2}{\beta N}}Q^{N}(t)\de B^i(t).
  \end{align}
\end{remark}

In order to prove propagation of chaos for \eqref{non_langevin_sde_weak_ips}, we impose the following assumption on $U$.

\begin{assumption}\label{second_assum_on_U}
Let $U \in C^2(\mathbb{R}^d)$ and $U(x) \geq 0$ for all $x \in \R^d$. We assume that there exists a compact set $\mathcal{K}$ such that following holds for some $K_1, K_2, K_3, K_4 >0$:
\begin{align*}
     K_1  |x|^{2} \leq U(x) \leq K_2 |x|^{2} \quad \text{and}\quad K_3|x| \leq |\nabla U(x)| \leq K_4|x|, \quad \text{for all } \quad x \in \mathcal{K}^c.
 \end{align*}
 We also assume that all second derivatives of $U$ are uniformly bounded in $x$. 
\end{assumption}

In \cite{vaes2024sharpchaos}, the well-posedness of (\ref{el_sde_mf}) as well as (\ref{non_langevin_sde_strong_ips}) is proved under Assumption~\ref{second_assum_on_U}. In the proof of well-posedness of particle system (\ref{non_langevin_sde_strong_ips}) in \cite{vaes2024sharpchaos}, the Khasminski-Lyapunov approach is employed using the same Lyapunov function as used in \cite{garbuno2020affine}. The exact same arguments can be used to show well-posedness of (\ref{non_langevin_sde_weak_ips}). We avoid repeating the same calculations considering the line of arguments remains unchanged since $ \C^N = \frac{1}{N}Q^{N}(Q^{N})^{\top} = \sqrt{\C^{N}}\sqrt{\C^{N}}^{\top}$. In addition, one can also obtain the moment bounds uniform in $N$ as has been done in \cite{vaes2024sharpchaos}, i.e., for $p \geq 1$ it holds that
\begin{align}
    \sup_{i =1,\dots, N} \sup_{t\in [0,T]}\mathbb{E}|X^{i,N}(t)|^{2p} &\leq C, \label{eld_opt_eq_mb}
 \end{align}
where $C >0$ is independent of $N$. As a consequence of uniform in $N$ moment bound, we have
\begin{align}
    \mathbb{E}|\Sigma(\mathcal{E}_{N}(t))| &= \frac{1}{N}\E \bigg|\sum_{i=1}^{N }(X^{i,N}(t) -\M^{N}(t)) (X^{i,N}(t) -\M^{N}(t))^{\top}\bigg|  \nonumber  \\  &
    \leq   \frac{1}{N}\sum_{i=1}^{N} \E|X^{i,N}(t) -\M^{N}(t)|^2
   \nonumber  \\ &  \leq  \frac{2}{N}\sum_{i=1}^{N} \E|X^{i,N}(t)|^2 + 2\E|\M^{N}(t)|^2 \leq  4 \frac{1}{N}\sum_{i=1}^{N}\mathbb{E}|X^{i,N}(t)|^{2} \leq C, \label{elo_cov_bound}
\end{align}

The main result of this section is the following theorem.

\begin{theorem}\label{weak_prop_chaos}
    Let $\mu_{0} \in \mathcal{P}_{2}(\mathbb{R}^{d})$, and let Assumption~\ref{second_assum_on_U} hold. Moreover, let $(X^{i,N})_{i=1}^{N}$ be the solution of interacting particle system (\ref{non_langevin_sde_weak_ips}), and let $\mathcal{E}^{N}_t$ denote empirical measure of these interacting $N$ particles at time $t$. Then, for all $t \in [0,T]$, there exists a deterministic limit $\mu_t$  of the empirical measure $\mathcal{E}^{N}_t$ as $ N \rightarrow \infty$. This $\mu_t$ satisfies the mean-field PDE (\ref{pde_mf}) in weak sense. 
\end{theorem}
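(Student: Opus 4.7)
The plan is to follow the Sznitman/Graham--M\'el\'eard three-step program: tightness of the empirical-measure valued processes, identification of any limit as a solution to the weak form of (\ref{pde_mf}), and uniqueness of that limit. Throughout, the uniform-in-$N$ moment estimate (\ref{eld_opt_eq_mb}) and its consequence (\ref{elo_cov_bound}) on the expected covariance norm will be the crucial inputs, since they are what compensate for the lack of global Lipschitzness of $\mu \mapsto \C(\mu)$ in $\mathcal{W}_2$.

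For tightness of $\cE^N$ in $C([0,T],\mathcal{P}(\R^d))$ with the weak topology, marginal tightness follows from Markov's inequality and (\ref{eld_opt_eq_mb}). To obtain a modulus-of-continuity estimate, I would apply It\^o's formula to $\phi(X^{i,N}(t))$ for $\phi \in C_c^2(\R^d)$ and average over $i$ to obtain
\begin{equation*}
\la \phi, \cE^N_t\ra = \la \phi, \cE^N_0\ra + \int_0^t L[\cE^N_s]\phi \, \de s + M^{N,\phi}_t,
\end{equation*}
with $L[\mu]\phi(x) = -\la \nabla\phi(x) \cdot \C(\mu)\nabla U(x)\ra + \frac{1}{\beta}\trace\bigl(\C(\mu)\nabla^2 \phi(x)\bigr)$. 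Using the identity $Q^N(Q^N)^\top = N\,\C^N$, a direct computation shows that the quadratic variation of $M^{N,\phi}$ is of order $1/N$, hence $M^{N,\phi}_t \to 0$ in $L^2$ uniformly on $[0,T]$. Combined with an $L^\infty$ bound on $L[\cE^N_s]\phi$ coming from (\ref{elo_cov_bound}) and the compact support of $\phi$, this yields an Aldous--Rebolledo-type estimate for each real-valued process $\la \phi, \cE^N\ra$ and, over a countable dense family of test functions, tightness of $\cE^N$ in the path space.

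To identify the limit, let $\mu^\infty$ be any subsequential weak limit. The martingale vanishes in $L^2$, and continuity of $\mu \mapsto \C(\mu)$ along sequences with uniformly bounded second moments (a property that the limit inherits from (\ref{eld_opt_eq_mb})) allows passing to the limit in the drift, yielding
\begin{equation*}
\la \phi, \mu^\infty_t\ra = \la \phi, \mu^\infty_0\ra + \int_0^t L[\mu^\infty_s]\phi \, \de s, \qquad \phi \in C_c^2(\R^d),
\end{equation*}
which is exactly the weak formulation of (\ref{pde_mf}). To upgrade subsequential convergence to full convergence toward a deterministic limit, I would prove uniqueness of solutions to (\ref{pde_mf}) in the class of flows with uniformly bounded second moments: given two such solutions, consider the associated nonlinear SDE (\ref{el_sde_mf}) and couple two copies through a common Brownian motion, exploiting the fact that on the sublevel set $\{\mu \in \mathcal{P}_2(\R^d) : \la |\cdot|^2, \mu\ra \le R\}$ both $\mu \mapsto \C(\mu)\nabla U$ and $\mu \mapsto \sqrt{\C(\mu)}$ are Lipschitz in $\mathcal{W}_2$; a Gr\"onwall argument, using the uniform moment propagation, closes the estimate.

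The main obstacle is, as already signaled, the absence of global $\mathcal{W}_2$-Lipschitzness of $\C(\mu)$. Both the $L^2$ decay of $M^{N,\phi}$ (through the bound on $\E |\C^N_t|$) and the uniqueness argument rely on restricting attention to flows concentrated on sets of bounded moments, which is precisely what the uniform-in-$N$ bounds of \cite{vaes2024sharpchaos}, adapted to the weak particle system (\ref{non_langevin_sde_weak_ips}) and reproduced as (\ref{eld_opt_eq_mb})--(\ref{elo_cov_bound}), provide. A secondary, conceptual point is that because the noise in (\ref{non_langevin_sde_weak_ips}) uses a shared $d\times N$ factor $Q^N$ against independent $N$-dimensional Brownian motions, the pathwise synchronous coupling used in \cite{vaes2024sharpchaos} for the strong system is unavailable; the martingale-problem route taken here is the natural replacement.
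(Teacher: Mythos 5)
Your plan matches the paper's structure: the same Sznitman/Graham--M\'el\'eard trilogy of tightness, identification, and uniqueness, with the same martingale-vanishing mechanism in the identification step and the same reliance on the uniform-in-$N$ moment bounds (\ref{eld_opt_eq_mb})--(\ref{elo_cov_bound}) to control the non-Lipschitz $\C(\mu)$-dependence. For tightness the paper proceeds slightly differently: rather than controlling the measure-valued process $\cE^N$ directly via Aldous--Rebolledo on a dense family of test functions, it invokes the exchangeability reduction (Sznitman, Proposition~2.2) so that tightness of $(\mathcal{L}(\cE^N))_N$ in $\mathcal{P}(\mathcal{P}(C([0,T];\R^d)))$ is equivalent to tightness of the single-particle path law $(\mathcal{L}(X^{1,N}))_N$, which is then checked with the Aldous criterion. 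Both reductions are standard and buy roughly the same thing; the paper's route yields tightness on path space rather than on the space of measure-valued paths, which is convenient for the functional $\Psi^\phi_t$ it defines on $\mathcal{P}(C([0,T];\R^d))$. Your closing remark, that the shared $d\times N$ noise factor $Q^N$ against $N$-dimensional Brownian motions forecloses the synchronous pathwise coupling used in \cite{vaes2024sharpchaos} and makes the martingale-problem route the natural one, is exactly the observation that motivates the paper's choice of a ``weak'' propagation-of-chaos argument.

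The one place where your proposal has a genuine gap is the uniqueness step. The paper simply cites the pathwise uniqueness of the nonlinear SDE (\ref{el_sde_mf}) established in \cite{vaes2024sharpchaos}. You instead sketch a synchronous coupling for (\ref{el_sde_mf}) and assert that on the sublevel set $\{\mu\in\mathcal{P}_2 : \la|\cdot|^2,\mu\ra\le R\}$ both $\mu\mapsto\C(\mu)\nabla U$ and $\mu\mapsto\sqrt{\C(\mu)}$ are $\mathcal{W}_2$-Lipschitz. The second claim is false as stated: the matrix square root $A\mapsto\sqrt{A}$ on positive semidefinite matrices is only $1/2$-H\"older near the boundary of the cone, so $\mu\mapsto\sqrt{\C(\mu)}$ fails to be Lipschitz near measures with degenerate covariance, and a bound on the second moment does not rule those out. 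To make your coupling argument rigorous you would additionally need a uniform lower bound on the smallest eigenvalue of $\C(\mu_t)$ along the two flows being coupled --- which is the content of Lemma~\ref{min_eigen_bound_lemma}, but that lemma is proved under Assumption~\ref{first_assum_on_U}, not Assumption~\ref{second_assum_on_U} used here --- or you would need to work at the level of $\C(\mu)$ itself rather than its square root (as is possible for the particle system (\ref{non_langevin_sde_weak_ips}), whose diffusion coefficient is $Q^N/\sqrt{N}$ rather than $\sqrt{\C^N}$). Either way, the uniqueness step is not closed as written; citing \cite{vaes2024sharpchaos} as the paper does, or reworking the coupling to avoid the square root, would repair it.
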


\subsection{Proof of Theorem~\ref{weak_prop_chaos}}
The proof follows the classical trilogy of arguments (see \cite{sznitman1991topics}):
\begin{itemize}
   \item Tightness of law of random empirical measure: Together with Prokhorov's theorem, this ensures that there exists a converging subsequence of the law of the random empirical measure as the number of particles tend to infinity.
   \item Identification of limit: This is achieved by looking at the PDE corresponding to mean-field SDEs.
   \item Uniqueness of limit: This follows from the uniqueness of solution of mean-field SDEs. 
\end{itemize}

\subsubsection{Tightness of random measure}
Let $\mathcal{L}(\mathcal{E}^{N})$ be law of empirical measures which means it belongs to $\mathcal{P}(\mathcal{P}(C([0,T];\mathbb{R}^{d})))$. We aim to show that the family of measures $(\mathcal{L}(\mathcal{E}^{N}))_{N\in \mathbb{N}}$ is tight in $\mathcal{P}(\mathcal{P}(C([0,T];\mathbb{R}^{d})))$.
From \cite[Proposition~2.2]{sznitman1991topics}, showing tightness of family of measures $(\mathcal{L}(\mathcal{E}^{N}))_{N\geq 2}$ in $\mathcal{P}(\mathcal{P}(C([0,T];\mathbb{R}^{d})))$ is equivalent to showing that family  $ (\mathcal{L}(X^{1,N}))_{N\in \mathbb{N}}$ is tight in $\mathcal{P}(C([0,T];\mathbb{R}^{d}))$. To this end, we employ Aldous criteria (see \cite[Section~16]{billingsley2013convergence}) as follows:
\begin{itemize}
    \item[(i)] For all $t\in [0,T]$, the time marginal law of $(X^{i,N}(t))_{i \in \mathbb{N}}$ is tight as a sequence in space $\mathcal{P}(\mathbb{R}^{d})$.
    \item[(ii)] For all $\epsilon > 0$ and $\zeta >0$, there exist $n_{0}$ and $\delta > 0 $ such that for any sequence of stopping times $\tau_{j}$ 
    \begin{align}
        \sup_{j > n_{0}} \sup_{\theta \leq \delta}\mathbb{P}(|X^{i,N}_{\tau_{j} + \theta} - X^{i,N}_{\tau_{j} }| > \zeta ) \leq \epsilon.
    \end{align}
\end{itemize}
From (\ref{eld_opt_eq_mb}), there is a $K$, not depending on $N$, such that $\E|X^{1,N}(t)|^2 \leq K$.
To verify the first condition consider a compact set $\mathcal{K}_{\epsilon} : =  \{ y \; ; \; |y|^{2} \leq K/\epsilon\}  $. Using Markov's inequality, we have
\begin{align}
\mathcal{L}_{X^{1,N}(t)}\big( \mathcal{K}_{\epsilon}^{c} \big) = \mathbb{P} ( |X^{1,N}(t)|^{2} > K/\epsilon ) \leq \epsilon\frac{\mathbb{E}|X^{1,N}(t)|^{2}}{K}  \leq \epsilon.
\end{align}
This verifies the first condition that for each $t \in [0,T]$ the sequence $(\mathcal{L}_{X^{1,N}(t)})_{N\in \mathbb{N}}$ is tight. 
To verify the second condition, we start with
\begin{align}
    X^{1,N}(\tau_{j} +\delta) = X^{1,N}(\tau_{j} ) -
    \int_{\tau_{j}}^{\tau_{j} +\delta} \Sigma(\mathcal{E}^{N}(t))
    \nabla U(X^{1,N}(t)) \de t +  \int_{\tau_{j}}^{\tau_{j} +\delta}\sqrt{\frac{2}{\beta  N}}Q^{N}(t) \de 
    B^{1}(t).
\end{align}
Rearranging terms, and squaring  and taking expectation on both sides, we obtain
\begin{align*}
    \mathbb{E}|X^{1,N}(\tau_{j} +\delta) - X^{1,N}(\tau_{j} )|^{2} \leq  
    2\mathbb{E}\bigg|\int_{\tau_{j}}^{\tau_{j} +\delta}  \Sigma(\mathcal{E}^{N}(t))
    \nabla U(X^{1,N}(t)) \de t  \bigg|^{2}
  + 2\mathbb{E}\bigg|\int_{\tau_{j}}^{\tau_{j} +\delta}\sqrt{\frac{2}{\beta  N}} Q^{N}(t) \de B^{1}(t)\bigg|^{2}.
\end{align*}

Considering the first term, by Cauchy-Bunyakovsky-Schwarz inequality, we have
\begin{align}
\mathbb{E}\bigg|\int_{\tau_{j}}^{\tau_{j} +\delta}  \Sigma(\mathcal{E}^{N}(t))
    \nabla U(X^{1,N}(t)) \de t  \bigg|^{2} & \leq \delta\mathbb{E} \bigg(\int_{\tau_{j}}^{\tau_{j} +\delta}  |\Sigma(\mathcal{E}^{N}(t))
    \nabla U(X^{1,N}(t))|^2 \de t  \bigg) \nonumber \\
    & \leq \delta \sup_{t \in[0,T]}\mathbb{E}  |\Sigma(\mathcal{E}^{N}(t)) \nabla U(X^{1,N}(t))|^{2}. 
\end{align}

Considering the second term, from Ito's isometry, we have
\[
    \mathbb{E}\bigg|\int_{\tau_{j}}^{\tau_{j} +\delta}\sqrt{\frac{2}{\beta  N}} Q^{N}(t) \de B^{1}(t)\bigg|^{2} = \mathbb{E} \int_{\tau_{j}}^{\tau_{j} +\delta} \bigg|\frac{2}{\beta  N} Q^{N}(t)(Q^{N})^{\top}(t) \bigg|^{2}  \de t = \mathbb{E}\int_{\tau_{j}}^{\tau_{j} +\delta}\bigg|  \frac{2}{\beta  } \Sigma(\mathcal{E}^{N}(t))\bigg| ^{2} \de t.
\] 
Now, by Cauchy-Bunyakovsky-Schwarz inequality, we have
\begin{align}
    \mathbb{E}\int_{\tau_{j}}^{\tau_{j} +\delta}\bigg|  \frac{2}{\beta  } \Sigma(\mathcal{E}^{N}(t))\bigg| ^{2} \de t & \leq \E\bigg( \int_{\tau_{j}}^{\tau_{j} +\delta} \frac{4}{\beta^2} \de t \bigg)^{1/2}  \bigg(\int_{\tau_{j}}^{\tau_{j} +\delta} \E|\C(\cE^{N}(t))|^4\de t\bigg)^{1/2} \nonumber \\
    & \leq  C\delta^{1/2}  \bigg(\int_{0}^{T}\E|\C(\cE^{N}(t))|^4\de t\bigg)^{1/2} \leq C \delta^{1/2}  \sup_{t \in [0,T]}\E|\C(\cE^{N}(t))|^2.
\end{align}
This implies
\begin{align*}
    \mathbb{E}|X^{1,N}(\tau_{j} +\delta) - X^{1,N}(\tau_{j} )|^{2} &\leq  
  C \delta\mathbb{E}\int_{\tau_j}^{\tau_j +\delta}  |\Sigma(\mathcal{E}^{N}(t))
    \nabla U(X^{1,N}(t))|^{2} \de t  
     + C\delta^{1/2} \sup_{t \in [0,T]} \E|\C(\cE^{N}(t))|^2
   \\   &   \leq  
  C \delta \sup_{t \in[0,T]}\mathbb{E}  |\Sigma(\mathcal{E}^{N}(t))
    \nabla U(X^{1,N}(t))|^{2}  
        + C\delta^{1/2} \sup_{t \in [0,T]} \E|\C(\cE^{N}(t))|^2.  
\end{align*}
Using moment bounds from (\ref{eld_opt_eq_mb}), we get
\begin{align}
    \mathbb{E}|X^{1,N}(\tau_{j} +\delta) - X^{1,N}(\tau_{j} )|^{2} &\leq C\delta^{1/2}. \label{eld_eqn_4.10}
\end{align}

It is not difficult to see that  for all $\epsilon >0$ and $\zeta$ there exists a $\delta_{0}$ such that for all $\delta \in(0, \delta_{0})$, we have
\begin{align}
    \sup_{\delta \in (0, \delta_{0})} \mathbb{P}|X^{1,N}(\tau_{j} + \delta) -X^{1,N}(\tau_{j}) | > \zeta ) \leq \sup_{\delta \in (0, \delta_{0})}\frac{\mathbb{E}|X^{1,N}(\tau_{j} +\delta) - X^{1,N}(\tau_{j} )|^{2}}{\zeta^{2}} \leq \epsilon,
\end{align}
where we have applied Markov's inequality. It is obvious that the choice of $\delta_{0}$ depends on positive constant $C$ appearing on the right hand side of (\ref{eld_eqn_4.10}). This ensures that second condition in Aldous criteria holds.

Having established the tightness of $(\mathcal{L}_{X^{1,N}})_{N\in \mathbb{N }} $ in $\mathcal{P}(C([0,T]; \mathbb{R}^{d}))$, and hence the tightness of $(\mathcal{L}(\mathcal{E}^{N})_{N\in \mathbb{N}}$ in $\mathcal{P}(\mathcal{P}(C([0,T]; \mathbb{R}^{d})))$, using Prokhorov's theorem \cite{billingsley2013convergence} we can infer that there exist a sub-sequence $(\mathcal{L}(\mathcal{E}^{N_{k}})_{N_{k}\in \mathbb{N}}$ of $(\mathcal{L}(\mathcal{E}^{N}))_{N\in \mathbb{N}}$ and a random measure $\mu $ such that $\mathcal{L}(\mathcal{E}^{N_{k}})$ converges to $\mu$ as $N_{k} \rightarrow \infty$. 

\subsubsection{Identification of limit} 

The PDE (in weak sense) associated with measure dependent Langevin dynamics is given by
\begin{align}\label{eld_pde}
    \la \phi(y), \mathcal{L}_{X(t)}(\de y) \ra -  \la \phi(y), \mathcal{L}_{X(0)}(\de y) \ra  & = -\int_{0}^{t} \la (\Sigma(\mathcal{L}_{X(s)})\nabla U(y) \cdot \nabla \phi(y)), \mathcal{L}_{X(s)}(\de y) \ra \de s 
    \nonumber \\ & \quad 
    + \int_{0}^{t}\frac{2}{\beta  } \la \trace(\nabla^2 \phi(y) \Sigma(\mathcal{L}_{X(s)})), \mathcal{L}_{X(s)}(\de y)\ra \de s, 
\end{align}
where $\phi \in C_{c}^{2}(\mathbb{R}^{d})$. 

Introduce the following functional on $\mathcal{P} (C([0,T];\mathbb{R}^{d}))$ for $t \in [0,T]$ and $\phi \in C^{2}_{c}(\mathbb{R}^{d})$ as
\begin{align}
    \Psi^{\phi}_{t}(\nu) :& =   \la \phi(y_{t}), \nu(\de y) \ra -  \la \phi(y_{0}), \nu(\de y) \ra  +\int_{0}^{t} \la (\Sigma(\nu(s))\nabla U(y_{s}) \cdot \nabla \phi(y_{s})), \nu(\de y) \ra \de s 
    \nonumber \\ & \quad 
    - \int_{0}^{t}\frac{2}{\beta  } \la (\trace(\text{Hess}^{\phi}(y_{s}) \Sigma(\nu_{s})), \nu_{s} \ra \de s  \label{knld_eqn_3.18}
\end{align}
We have already established the convergence of a sub-sequence of $(\mathcal{L}(\mathcal{E}^{N}))_{N\in \mathbb{N}}$ to a random measure $\mu \in \mathcal{P}(\mathcal{P} ( C([0,T]; \mathbb{R}^{d})))$. The next thing to show is that this random measure is actually nothing but $\mu := \delta_{\mathcal{L}_{X}}$. 
To this end, we establish the following bound for all $\phi\in C^{2}_{c}(\mathbb{R}^{d})$:
\begin{align}
    \mathbb{E}|\Psi^{\phi}_{t}(\mathcal{E}^{N})|^{2} \leq \frac{C}{N}, \label{elo_eqn_4.14}
\end{align}
where $C >0$ is independent of $N$.

Using Ito's formula, we have
\begin{align}
    \phi&(X^{i,N}(t))  = \phi(X^{i,N}(0)) - \int_{0}^{t} \la\nabla \phi(X^{i,N}(s)) \cdot \Sigma(\mathcal{E}^{N}(s)) \nabla U(X^{i,N}(s))\ra\de s  
    \nonumber \\ & \quad 
    + \int_{0}^{t} \frac{2}{\beta  }  \trace(\nabla^2\phi(X^{i,N}(s)) Q^{N}(s)(Q^{N})^{\top}(s))\de s
    + \int_{0}^{t} \sqrt{\frac{2 }{\beta  }} \la\nabla \phi(X^{i,N}(s)) \cdot Q^{N}(s) \de B^{i}(s)\ra.
\end{align}
This implies, using \eqref{knld_eqn_3.18}, that
\begin{align}
    \Psi^{\phi}_{t}(\mathcal{E}^{N}) =  \frac{1}{N}\sum_{i= 1}^{N}\int_{0}^{t} \sqrt{\frac{2 }{\beta  }} \la\nabla \phi(X^{i,N}(s)) \cdot Q^{N}(s) \de B^{i}(s)\ra
\end{align}
which on using the martingale property of Ito's integral results in the following:
\begin{align}
    \mathbb{E}|\Psi^{\phi}_{t}(\mathcal{E}^{N})|^{2} = \frac{1}{N^{2}}\sum_{i=1}^{N}\mathbb{E}\bigg| \int_{0}^{t} \sqrt{\frac{2 }{\beta  }} \la\nabla \phi(X^{i,N}(s)) \cdot Q^{N}(s) \de B^{i}(s)\ra\bigg|^{2}. 
\end{align}
Using Ito's isometry, we obtain
\begin{align}
    \mathbb{E}|\Psi^{\phi}_{t}(\mathcal{E}^{N})|^{2} = \frac{1}{N^{2}}\frac{2}{\beta}\sum_{i=1}^{N}\int_{0}^{t}\mathbb{E}\la\nabla \phi(X^{i,N}(s))\cdot \Sigma(\mathcal{E}^{N}(s))\nabla \phi(X^{i,N}(s))\ra^2 \de s
\end{align}
which on using (\ref{elo_cov_bound}) gives
\begin{align}\label{elo_eq_4.19}
     \mathbb{E}|\Psi^{\phi}_{t}(\mathcal{E}^{N})|^{2} \leq \frac{C}{N},
\end{align}
and thus establishing (\ref{elo_eqn_4.14}).

Note that for any bounded continuous function $f$, we have
\begin{align}
    \int_{\mathbb{R}^{d}} f(x) \mathcal{E}^{N}(t)(\de x) \rightarrow  \int_{\mathbb{R}^{d}} f(x) \mu_t(\de x)\quad \text{as}\quad N\rightarrow \infty,
\end{align}
and also the family of random variables $\{ \Sigma(\mathcal{E}^{N}(t)) \}_{N \in \mathbb{N}}$ is uniformly integrable due to (\ref{elo_cov_bound}). Consequently, we have (see, e.g., \cite[Thm.~2.20]{van2000asymptotic})
\begin{align}
    \Sigma(\mathcal{E}^{N}(t)) \rightarrow \Sigma(\mu_t) \quad \text{as}\quad N\rightarrow \infty. 
\end{align}

This ensures that $\Psi^{\phi}_{t}(\nu)$ is continuous function of $\nu$. Also, from (\ref{elo_eq_4.19}) it is clear that the family of random variables $\{ |\Psi^{\phi}_{t}(\mathcal{E}^{N}(t))|^{2}\}_{N\in \mathbb{N}}$ is uniformly integrable. Hence, due to Fatau's lemma, we have
\begin{align}
 \mathbb{E}|\Psi^{\phi}_{t}(\mu)|^{2}  =  
 \mathbb{E}\lim\limits_{N\rightarrow \infty} |\Psi^{\phi}_{t}(\mathcal{E}^{N})|^{2} = 
 \lim\limits_{N\rightarrow \infty}\mathbb{E}|\Psi^{\phi}_{t}(\mathcal{E}^{N})|^{2} = 0. 
\end{align}
This implies $\mu$ satisfies PDE (\ref{eld_pde}) (in weak sense)
\begin{align}
    \Psi^{\phi}_{t}(\mu) = 0,\quad \text{a.s.}
\end{align}

\subsubsection{Uniqueness of limit} 
The uniqueness of the solution (in weak sense) of PDE (\ref{eld_pde}) follows from the pathwise uniqueness of the solution of (\ref{el_sde_mf}) \cite{vaes2024sharpchaos}. 
This implies any arbitrary subsequence of $\{\mathcal{E}^{N}(t)\}_{N\in \mathbb{N}}$ has a convergent subsequence and all these subsequences converge to the same limit. Hence this completes the proof noticing that sequence itself is converging.

\section{Time discretization analysis}\label{sec:time_disc}

The implementation of (\ref{non_langevin_sde_weak_ips}) and (\ref{non_langevin_sde_strong_ips}) requires numerical discretization. Let the final time $T$ be fixed. We uniformly partition $[0,T]$ into $n$ sub-intervals of size $h$, i.e., $t_0=0<\dots<t_{n}=T$, $h = t_{k+1}-t_{k} = T/n$. Let $\alpha $ be a constant belonging to interval $ (0, 1/2)$. We denote the approximating Markov chain as $(X_{k}^{i,N})_{k=0}^{n}$ starting from $X_{0}^{i}$ for all $i =1,\dots, N$. We also denote $\beta_{k} := \beta(t_{k})$ and  $\C_{k}^{N} := \C(\mathcal{E}_{k}^{N})$ where
\begin{align}
\mathcal{E}_{k}^{N} = \frac{1}{N}\sum_{k=1}^{N}\delta_{X^{i,N}_{k}},
\end{align}
and therefore 
\begin{align}
    \C(\mathcal{E}_{k}^{N}) = \frac{1}{N}Q_{k}^{N} (Q_{k}^{N})^{\top} 
\end{align}
with $
    Q_{k}^{N} = [ X^{1,N}_{k}- \M_{k},\dots,  X^{N,N}_{k}- \M_{k}],\quad
    \M_{k} = \frac{1}{N}\sum_{i=1}^{N}X^{i,N}_{k}$.
As is known in the stochastic numerics literature (see \cite{hutzenthaler2011strong}) that Euler-Maruyama scheme may diverge in strong sense for coefficients with growth higher than linear. There are explicit schemes proposed to deal with the issue of unbounded moments arising due to non-linear growth \cite{hutzenthaler_tamed_2012, tretyakov2013fundamental}. Here, following \cite{hutzenthaler_tamed_2012},     we present tamed schemes to deal with non-linear growth of coefficients. We present two versions of tamed Euler-Maruyama scheme:
\begin{itemize}
    \item[(i)]  Particle-wise tamed Euler-Maruyama scheme : 
    \begin{equation}
    X^{i,N}_{k+1} = X^{i,N}_{k} -  \mathbb{B}_1(h, X^{i,N}_{k})\C_k^{N}\nabla U(X^{i,N}_k)h + \sqrt{\frac{2}{\beta N}} Q_{k}^{N}\xi_{k +1}^i h^{\frac{1}{2}},\quad k=0,\dots,n-1,
    \label{particle_tame_euler_lang}
\end{equation}
with
\begin{align*}
    \mathbb{B}_1(h, X^{i,N}_{k}) =  \diag\bigg(\frac{1}{1 + h^{\alpha}|\C_{k}^{N}\nabla U(X^{i,N}_{k})| }, \dots,\frac{1}{1 + h^{\alpha}|\C_{k}^{N}\nabla U(X^{i,N}_{k})| }\bigg), 
\end{align*}
where $|\C_{k}^{N}\nabla U(X^{i,N}_{k})|$ is the Euclidean norm and $\xi^{i}_{k+1}$ is standard normal $N$-dimensional random vector for all $i=1,\dots,N$ and $k=0,\dots,n-1$.

\item[(ii)]  Coordinate-wise tamed Euler-Maruyama scheme :

\begin{equation}
    X^{i,N}_{k+1} = X^{i,N}_{k} -  \mathbb{B}_2(h, X^{i,N}_{k}) \C_k^{N}\nabla U(X^{i,N}_k)h + \sqrt{\frac{2}{\beta_k N}} Q_{k}^{N}\xi_{k +1}^i h^{\frac{1}{2}},\quad k=0,\dots,n-1,
    \label{particle_tame_euler_lang2}
\end{equation}
with
\begin{align*}
    \mathbb{B}_2(h, X^{i,N}_{k}) =  \diag\bigg(\frac{1}{1 + h^{\alpha}|(\C_k^{N}\nabla U(X^{i,N}_k))_{1}|}, \dots, \frac{1}{1 +  h^{ \alpha}|(\C_k^{N}\nabla U(X^{i,N}_k))_{d}|}\bigg), 
\end{align*}
where 
$(\C_k^{N}\nabla U(X^{i,N}_k))_{j}$ denotes $j-$th coordinate of $\C_k^{N}\nabla U(X^{i,N}_k)$.

\end{itemize}
In (\ref{particle_tame_euler_lang}), due to particle-wise taming, we have
\begin{align}
   \frac{1}{ |\mathbb{B}_1(h, X^{i,N}_{k}) \C_k^{N}\nabla U(X^{i,N}_k)|} = \frac{1}{|\C_k^{N}\nabla U(X^{i,N}_k)|} + h^{\alpha},
\end{align}
and, therefore the following bound holds:
\begin{align}
|\mathbb{B}_1(h, X^{i,N}_{k}) \C_k^{N}\nabla U(X^{i,N}_k)| \leq \min\bigg(\frac{1}{h^{\alpha}}, |\C_{k}^{N}\nabla U(X^{i,N}_k)|\bigg). \label{tame_bound_3.1}
\end{align}
In the similar manner, the following is true for coordinate-wise taming:
\begin{align}
    |(\mathbb{B}_2(h, X^{i,N}_{k}) \C_k^{N}\nabla U(X^{i,N}_k))_{j}| \leq \min\bigg(\frac{1}{h^{\alpha}}, |(\C_{k}^{N}\nabla U(X^{i,N}_k))_{j}|\bigg),\quad j=1,\dots,d. 
\end{align}
\begin{remark}[Balancing technique]
Writing the schemes with a preconditioning on the drift by a matrix $\mathbb{B}$, one can imagine that there can be other possible choices of balancing matrices $ \mathbb{B}$. This particular idea of balancing appears in \cite{milstein1998balanced} for stiff SDEs, and has been utilized in \cite{tretyakov2013fundamental} (see also \cite{milstein2004stochastic}) to design balanced methods to deal with non-globally Lipschitz coefficients. In this context, taming can be considered as one particular choice of balancing matrix. 
\end{remark}

In this section, we aim to prove convergence of the tamed numerical scheme (\ref{particle_tame_euler_lang}) to its continuous limit as discretization step $h \rightarrow 0$. The techniques employed to obtain this convergence can also be used to get the convergence of (\ref{particle_tame_euler_lang2}). Here, however, we will only focus on (\ref{particle_tame_euler_lang}).

The main issue is to obtain the convergence uniform in $N$, where $N$ represents number of particles, considering that we have a non-globally Lipschitz drift coefficient. To this end, 
we let $\chi_{h}(t) = t_{k}$ for all $t_{k} \leq t < t_{k+1}$ and write the continuous time version of tamed Euler scheme (\ref{particle_tame_euler_lang}) as follows:
\begin{align} \label{cont_baleuler_ips}
\de Y^{i,N}(t) = F(Y^{i,N}(\chi_{h}(t)), \Sigma^{N}_{Y}(\chi_{h}(t))) \de t + \sqrt{\frac{2}{\beta}}Q^{N}_{Y}(\chi_{h}(t)) \de B^{i}(t),
\end{align}
where, for brevity, we have denoted
\begin{align}
    F(Y^{i,N}(\chi_{h}(t)), \Sigma^{N}(\chi_{h}(t))) = - \frac{ \Sigma^N_{Y} (\chi_{h}(t))\nabla U(Y^{i,N}(\chi_{h}(t)))}{ 1 + h^{\alpha }  |\Sigma^N_{Y} (\chi_{h}(t)) \nabla U(Y^{i,N}(\chi_{h}(t))) | } \label{ildo_eqn_3.25}
\end{align}
with $\alpha \in (0, 1/2)$. Also, we have denoted
\begin{align}
    \Sigma^{N}_{Y}(\chi_h(t)) := \frac{1}{N} Q^{N}_{Y}(\chi_{h}(t)) Q^{N}_{Y}(\chi_{h}(t))^{\top},
\end{align}
where
\[
Q^{N}_{Y}(t) = [Y^{1,N}(\chi_{h}(t))-\M^N_Y(\chi_{h}(t)),...,Y^{N,N}(t)-\M^N_Y(\chi_{h}(t))]
\]
with $\M^N_Y(\chi_{h}(t))$ being the ensemble mean given by
\begin{equation}
 \M^N_Y(\chi_{h}(t)):=   \frac{1}{N}\sum_{j=1}^N Y^{j,N}(\chi_{h}(t)).   
\end{equation}
In this section, we will need to strengthen Assumption~\ref{second_assum_on_U} as follows:
\begin{assumption}\label{third_assum_on_U}
  Let Assumption~\ref{second_assum_on_U} hold. Moreover, let  $U \in C^4(\mathbb{R}^d)$ and assume that its third and fourth order partial derivatives  are uniformly bounded in $x \in \R^d$. 
\end{assumption}
The main result of this section is the following theorem. 
\begin{theorem}\label{thm:convergence_time_disc}
Let Assumption~\ref{third_assum_on_U} hold.  Let $\sup_{1 \leq i \leq N} \E | X^{i,N}(0)|^{2} < \infty $, $\sup_{1 \leq i \leq N} \E | Y^{i,N}(0)|^{2} < \infty $ and
$|X^{i,N}(0) - Y^{i,N}(0)| \leq Ch^{1/2} $ with $C>0$ being independent of $N$ and $h$. Then, for all $t \in [0,T]$
\begin{align}
  \lim_{h \rightarrow 0} \sup_{i= 1,\dots, N} (\E| X^{i,N}(t) - Y^{i,N}(t)|^{2})^{1/2} = 0. 
\end{align} 
\end{theorem}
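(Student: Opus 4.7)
The plan is to follow the standard strong-convergence roadmap for tamed explicit schemes, adapted to the interacting particle system and with $N$-uniformity tracked at every step. Concretely: (a) prove a priori moment bounds for the continuous-time interpolation $Y^{i,N}$ that are uniform in $N$ and $h$; (b) establish one-step time-regularity estimates of order $h$; (c) apply It\^o's formula to $|e^{i,N}(t)|^2 := |X^{i,N}(t)-Y^{i,N}(t)|^2$ and decompose the drift difference into four natural error contributions; and (d) close via a Gronwall argument that exploits the exchangeability of particles.

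For step (a) I would mirror the Khasminskii--Lyapunov argument used in \cite{vaes2024sharpchaos, garbuno2020affine} for the continuous particle system. Under Assumption~\ref{third_assum_on_U}, a Lyapunov function built from $U$ retains the right structure for the tamed update; combined with the pointwise bound $|F|\le h^{-\alpha}$ from \eqref{tame_bound_3.1} and the factorization $\Sigma^{N}_{Y}(\chi_h(t)) = (1/N)Q^{N}_{Y}(\chi_h(t))(Q^{N}_{Y}(\chi_h(t)))^{\top}$, one obtains $\sup_{h,N}\sup_{k\leq T/h}(1/N)\sum_i \E|Y^{i,N}_k|^{2p}\leq C_p$ for every $p\geq 1$. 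The uniformity in $N$ comes precisely from working with symmetric (averaged) functionals, exactly as in \eqref{elo_cov_bound}. Step (b) then uses $|F|\leq h^{-\alpha}$ together with these moment bounds and BDG to give $\E|Y^{i,N}(t)-Y^{i,N}(\chi_h(t))|^{2p}\leq Ch^{p}$, valid because $\alpha\in(0,1/2)$.

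For step (c), apply It\^o's formula to $|e^{i,N}(t)|^2$ and decompose the drift difference as
\[
\Sigma(\mathcal{E}^{N}(t))\nabla U(X^{i,N}(t)) - F\bigl(Y^{i,N}(\chi_h(t)),\Sigma^{N}_{Y}(\chi_h(t))\bigr) = T_1+T_2+T_3+T_4,
\]
where $T_1 = \Sigma(\mathcal{E}^N(t))[\nabla U(X^{i,N}(t))-\nabla U(Y^{i,N}(t))]$ is bounded via $\|\nabla^2 U\|_\infty$ (Assumption~\ref{third_assum_on_U}) by $C|\Sigma(\mathcal{E}^N(t))|\,|e^{i,N}(t)|$; $T_2 = [\Sigma(\mathcal{E}^N(t))-\Sigma^N_{Y}(t)]\nabla U(Y^{i,N}(t))$ is unpacked through the identity $\Sigma(\mathcal{E}^N)-\Sigma^N_Y = (1/N)[Q^N(Q^N-Q^N_Y)^\top + (Q^N-Q^N_Y)(Q^N_Y)^\top]$ into particle differences averaged over $j$; $T_3$ is the pure time-regularity term absorbed by step (b) and the smoothness of $\nabla U$; and $T_4 = \Sigma^N_Y(\chi_h(t))\nabla U(Y^{i,N}(\chi_h(t))) - F(\cdots)$ is the taming defect, for which the elementary identity $f - f/(1+h^\alpha|f|) = h^\alpha f|f|/(1+h^\alpha|f|)$ yields $|T_4|\leq h^\alpha|\Sigma^N_Y\nabla U(Y^{i,N}(\chi_h(t)))|^2$ and hence $\E|T_4|^2 \leq Ch^{2\alpha}$ via step (a). An analogous decomposition of the diffusion, handled by It\^o isometry, reduces it to an $L^2$-contribution involving $|Q^N(t)-Q^N_Y(\chi_h(t))|^2$ which again unpacks into particle differences plus a regularity remainder. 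Exchangeability gives $\E|e^{j,N}(t)|^2 = \E|e^{1,N}(t)|^2$, so taking the supremum over $i$ and applying Gronwall produces $\sup_i\E|e^{i,N}(t)|^2\leq C\bigl(\sup_i\E|e^{i,N}(0)|^2 + h^{2\alpha} + h\bigr)e^{CT}$ with $C$ independent of $N$; together with the hypothesis $|e^{i,N}(0)|\leq Ch^{1/2}$ this gives the stated convergence.

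The main obstacle, I anticipate, is controlling $T_2$ and its diffusion analogue uniformly in $N$: the empirical covariance difference involves all $N$ particles, so a direct bound would pick up $N$-dependence. The factorized expansion above is exactly what avoids this — each summand contains one error factor $Q^N-Q^N_Y$ paired with one ``good'' factor ($Q^N$ or $Q^N_Y$) whose higher moments are uniformly bounded in $N$ by step (a); Cauchy--Schwarz in $i$ then folds the contribution cleanly into $\sup_i\E|e^{i,N}|^2$. A subsidiary difficulty is the effectively cubic growth of $\Sigma\nabla U$ outside the compact set in Assumption~\ref{second_assum_on_U}; I would address this, if needed, by a stopping-time localization on $\{\max(|X^{i,N}(s)|,|Y^{i,N}(s)|)\leq R\}$ and use Markov's inequality with the uniform moment bounds to send $R\to\infty$ at an $h$-independent rate, so that the un-stopped contribution is $o(1)$ as $h\to 0$.
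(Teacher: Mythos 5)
Your high-level roadmap --- uniform-in-$N$-and-$h$ moment bounds for $Y^{i,N}$, a one-step regularity estimate, an It\^o-plus-Gronwall argument on the squared error, a decomposition of the drift difference into Lipschitz, covariance-error, time-regularity and taming-defect pieces, and a localization to handle the superlinear growth --- is exactly the paper's strategy (Lemmas~\ref{mom_bo_dis_part_ildo}, \ref{cont_step_error_lemma} and \ref{intermediate_lemma} play the roles of your (a), (b), (c) with $T_1,T_2$ matching the paper's $B_1,B_2$, $T_3$ matching (II) and $T_4$ matching (III)). Two points do need repair, however. First, the stopping time must be defined through the \emph{empirical} second moment $\frac{1}{N}\sum_{j}|X^{j,N}(s)|^2$ (and likewise for $Y$), as in the paper's $\tau^1_R,\tau^2_R$, rather than the per-particle quantity $\max\bigl(|X^{i,N}(s)|,|Y^{i,N}(s)|\bigr)$ you propose. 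The averaged form is what keeps $|\Sigma^N|$ and $|\Sigma^N_Y|$ bounded by $CR$ on $[0,\tau_R]$ and is also what yields the $N$-uniform tail bound $\mathbb{P}(\tau_R<T)\leq C/R$ with a single application of Markov's inequality; a per-particle stopping time does not control the empirical covariance, and a max-over-particles version forces a union bound that degrades as $N$ grows, destroying the $N$-uniformity the theorem claims. Second, and related, the Gronwall inequality you display with the $R$-free exponent $e^{CT}$ is not attainable: on the stopped set the effective Lipschitz constant of the drift carries the factor $(1+R)^2$ from the covariance bound, so the correct stopped estimate is of the form $\E|e^{i,N}(t\wedge\tau_R)|^2\leq C\bigl(h^{2\alpha}+(1+R)^2 h\bigr)\,e^{C(1+R)^2}$. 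The localization is therefore not a contingency (``if needed'') but the crux of the argument; one must then choose $R=R(h)\to\infty$ sufficiently slowly (e.g.\ logarithmically in $1/h$) so that both the stopped estimate and the tail $C/R$ vanish as $h\to 0$, exactly as the paper does in its final step.
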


The first step towards establishing the convergence result in the above theorem is to obtain moment bounds that are independent of $N$ and $h$.
\begin{lemma}\label{mom_bo_dis_part_ildo}
   Let Assumption~\ref{third_assum_on_U} be satisfied. Let $p\geq 1$ be a constant. Let $  \sup_{1 \leq i \leq N} \E |Y^{i,N}(0)|^{2p} \leq C$ with $C>0$ being independent of $N$ and $h$. Then, the following holds: 
\begin{align}
    \sup_{1 \leq i \leq N} \E |Y^{i,N}(t)|^{2p} \leq C, \label{num_mom_bound}
\end{align}
where $C >0$ is independent of $h$ as well as $N$. 
\end{lemma}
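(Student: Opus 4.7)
The plan is to establish (\ref{num_mom_bound}) by applying Ito's formula to the Lyapunov function $\phi(y) := (1+U(y))^p$ along the continuous-time interpolation (\ref{cont_baleuler_ips}) and closing a discrete Gronwall inequality first for the averaged quantity $M_p(t) := \tfrac{1}{N}\sum_{i=1}^N \E\,\phi(Y^{i,N}(t))$, and then for each individual $\E\,\phi(Y^{i,N}(t))$. The two-sided comparison $c_1|y|^{2p}-c_2 \leq \phi(y) \leq c_3(1+|y|^{2p})$ that follows from Assumption~\ref{third_assum_on_U} reduces the claim to showing $\sup_i \E\,\phi(Y^{i,N}(t))$ is bounded uniformly in $N$ and $h$.

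As a preliminary one-step bound, I would use the uniform taming estimate (\ref{tame_bound_3.1}), i.e.\ $|F|\leq h^{-\alpha}$, together with the Brownian increment $\sqrt{2/\beta}\,Q^N_Y(t_k)(B^i(t)-B^i(t_k))$ having covariance $\tfrac{2(t-t_k)}{\beta}\C^N_Y(t_k)$ and the trace bound $|\C^N_Y(t_k)|\leq \tfrac{1}{N}\sum_j|Y^{j,N}(t_k)|^2$, to obtain, for $t\in[t_k,t_{k+1}]$ and $q\geq 1$,
\begin{align*}
\E|Y^{i,N}(t)-Y^{i,N}(t_k)|^{2q}\leq C\bigl(h^{2q(1-\alpha)}+h^q\,\E|\C^N_Y(t_k)|^{q}\bigr).
\end{align*}
Applying Ito to $\phi(Y^{i,N}(t))$, I would then split
\begin{align*}
\nabla\phi(Y^{i,N}(t))\cdot F = \nabla\phi(Y^{i,N}(t_k))\cdot F + [\nabla\phi(Y^{i,N}(t))-\nabla\phi(Y^{i,N}(t_k))]\cdot F.
\end{align*}
The frozen-point piece equals
\begin{align*}
-p(1+U(Y^{i,N}(t_k)))^{p-1}\frac{\la\nabla U(Y^{i,N}(t_k)),\C^N_Y(t_k)\nabla U(Y^{i,N}(t_k))\ra}{1+h^\alpha|\C^N_Y(t_k)\nabla U(Y^{i,N}(t_k))|}\leq 0
\end{align*}
and can be discarded. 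The remainder is controlled by a Taylor expansion of $\nabla\phi$ using uniform bounds on $\nabla^2 U$ and $\nabla^3 U$ from Assumption~\ref{third_assum_on_U}, combined with the one-step bound; because $\alpha<\tfrac12$, the $h^{-\alpha}$ factor in $|F|$ is absorbed by the $\sqrt{h}$ coming from the Brownian increment, so the integrated contribution is at most $Ch^{1/2-\alpha}(1+M_p(t_k))$. The Ito correction $\tfrac{1}{\beta}\trace(\nabla^2\phi(Y^{i,N}(t))\C^N_Y(t_k))$ decomposes into two pieces, each bounded by $C(1+U(Y^{i,N}(t)))^{p-1}|\C^N_Y(t_k)|$ using $|\nabla U|\leq C(1+U)^{1/2}$ (a consequence of Assumption~\ref{third_assum_on_U}) and uniform boundedness of $\nabla^2 U$.

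Using $|\C^N_Y|\leq \tfrac{1}{N}\sum_j|Y^{j,N}|^2$ and Young's inequality, cross products $(1+U(Y^{i,N}))^{p-1}|Y^{j,N}|^2$ are bounded by $C(\phi(Y^{i,N})+\phi(Y^{j,N}))$. Averaging the resulting inequality over $i=1,\dots,N$ produces a closed discrete inequality $M_p(t_{k+1})\leq (1+Ch)M_p(t_k)+Ch$, from which discrete Gronwall yields $M_p(t_k)\leq e^{CT}(M_p(0)+1)$ uniformly in $N$ and $h$. Substituting this back into the per-particle inequality then gives $\E\,\phi(Y^{i,N}(t_{k+1}))\leq (1+Ch)\E\,\phi(Y^{i,N}(t_k))+Ch$ with an $h$-independent remainder, and a second application of discrete Gronwall, followed by interpolation to $t\in[t_k,t_{k+1}]$ via the one-step bound, completes the proof. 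The main obstacle I anticipate is the self-referential coupling through $\C^N_Y$: both the tamed drift and the diffusion depend on the very moments we are trying to bound. The saving feature is the trace domination $|\C^N_Y|\leq \tfrac{1}{N}\sum_j|Y^{j,N}|^2$, which collapses every estimate back to $M_p$ and permits the Gronwall closure.
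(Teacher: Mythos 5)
Your proposal is correct and the high-level strategy matches the paper's: apply It\^{o}'s formula to a Lyapunov function built from $U$, exploit that the tamed drift contracted against $\nabla U$ at the frozen grid point is nonpositive because $\Sigma^N_Y(\chi_h(t))$ is positive semi-definite, control the remaining increment of the gradient over one step using boundedness of higher derivatives of $U$ and the taming cap $|F|\le h^{-\alpha}$, and close with Gr\"onwall after collapsing all the mean-field coupling onto the averaged moment via $|\Sigma^N_Y|\le \tfrac{1}{N}\sum_j |Y^{j,N}|^2$. Where you depart from the paper is in the bookkeeping. The paper works with $U^p$ directly, applies It\^{o}'s formula a second time to $G_j=\partial_j U$ to represent the increment $\nabla U(Y(t))-\nabla U(Y(\chi_h(t)))$ exactly, uses the Burkholder--Davis--Gundy inequality to handle the resulting inner stochastic integral as well as the outer martingale term, and closes a continuous Gr\"onwall inequality for $\sup_i \E\sup_{t\in[0,T]}U^p(Y^{i,N}(t))$ in a single pass. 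You instead use $(1+U)^p$ (which nicely sidesteps any possible degeneracy of $U^{p-2}$ near zeros of $U$ when $p<2$), control the gradient increment by a mean-value/Taylor estimate combined with a separate one-step $L^{2q}$ bound, take pointwise-in-$t$ expectations so the outer martingale term vanishes without BDG, and close via a two-stage discrete Gr\"onwall (first on the averaged $M_p$, then per-particle). Your route yields only a pointwise bound $\sup_t\E\phi(Y^{i,N}(t))\le C$ rather than the paper's $\E\sup_t$ bound, but the lemma statement asks only for the pointwise bound, so this is a small economy. One place to tighten the sketch: the Taylor remainder involves $(1+U(\xi))^{p-1}$ at an intermediate point $\xi$ and the one-step increment with the $h^{-\alpha}$ factor, so to reach a form suitable for Gr\"onwall you need a Young-type split of the product $(1+U)^{p-1}\cdot(\text{increment})\cdot h^{-\alpha}$ back into $\phi$ plus an $h$-power; the resulting positive power of $h$ is $h^{p(\frac12-\alpha)}$-type rather than literally $h^{\frac12-\alpha}$, but since $\alpha<\tfrac12$ it is still positive and the discrete Gr\"onwall closes as you intend.
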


In the below proof, we use Gr\"{o}nwall type arguments to obtain 
\begin{equation}
    \sup_{1 \leq i \leq N} \E \sup_{t\in[0,T]}U^{p}(Y^{i,N}(t)) \leq C,
\end{equation}
which, thanks to Assumption~\ref{third_assum_on_U}, provides the required moment bound (\ref{num_mom_bound}).

\begin{proof}
Applying Ito's formula on $U^{p}(Y^{i,N}(t))$, we get
\begin{align}
    \de U^{p}(Y^{i,N}(t)) & =  p U^{p -1}(Y^{i,N}(t))\la\nabla U (Y^{i,N}(t))\cdot F(Y^{i,N}(\chi_{h}(t)),  \Sigma^{N}_{Y}(\chi_{h}(t)))\ra \de t 
    \nonumber \\ &  \quad 
    + p(p-1)\frac{1}{\beta}U^{p-2}(Y^{i,N}(t)) \trace\big(\nabla U(Y^{i,N}(t))\nabla U ^{\top}(Y^{i,N}(t)) \Sigma^{N}_{Y}(\chi_{h}(t))\big) \de t
     \nonumber \\ &  \quad 
     + p \frac{1}{\beta}U^{p-1}(Y^{i,N}(t)) \trace\big( \nabla^2 U( Y^{i,N}(t)) \Sigma^{N}_{Y}(\chi_{h}(t))\big) \de t
      \nonumber \\ &  \quad  
      + p\sqrt{\frac{2}{\beta N}}U^{p-1}(Y^{i,N}(t))\la\nabla U(Y^{i,N}(t)) \cdot Q^{N}_{Y} (\chi_{h}(t)) \de B^{i}(t)\ra. \label{eldo_eqn_4.37}
\end{align}
For the sake of convenience, we denote
\begin{align}
    G(Y^{i,N}(t)) := \nabla U (Y^{i,N}(t)).
\end{align}
Applying Ito's formula on $j$-th component of $G$, we arrive at
\begin{align} \label{ildo_eqn_3.29}
G_{j}(Y^{i,N}(t)) &= G_{j}(Y^{i,N}(\chi_{h}(t))) + \int_{\chi_{h}(t)}^{t}\la \nabla G_{j} (Y^{i,N}(s)) \cdot F(Y^{i,N}(\chi_{h}(s)),  \Sigma^{N}_{Y}(\chi_{h}(s)))\ra \de s 
\nonumber \\ &  \quad     
 + \int_{\chi_{h}(t)}^{t}\frac{1}{\beta} \trace\big(\nabla^2 G_{j}( Y^{i,N}(s)) \Sigma^{N}_{Y}(\chi_{h}(s))\big) \de s 
 \nonumber \\ &  \quad  
 + \int_{\chi_{h}(t)}^{t}\sqrt{\frac{2}{\beta N}}\big\la \nabla G_{j}(Y^{i,N}(s)) \cdot Q^{N}_{Y}(\chi_{h}(s)) \de B^{i}(s)\big\ra.
\end{align}
We will estimate bounds on terms in (\ref{eldo_eqn_4.37}) one by one. Let us start with the first term in (\ref{eldo_eqn_4.37}):
\begin{align}
    A_1 :&=  \la\nabla U (Y^{i,N}(t))\cdot F(Y^{i,N}(\chi_{h}(t)),  \Sigma^{N}_{Y}(\chi_{h}(t)))\ra
    \nonumber \\ &  
    = \big\la\big(\nabla U (Y^{i,N}(t)) - \nabla U (Y^{i,N}(\chi_{h}(t)))\big)\cdot   F(Y^{i,N}(\chi_{h}(t)),  \Sigma^{N}_{Y}(\chi_{h}(t))) \big\ra 
    \nonumber \\ &  \quad  
    + \big\la\big(\nabla U (Y^{i,N}(\chi_{h}(t)))\big)\cdot   F(Y^{i,N}(\chi_{h}(t)),  \Sigma^{N}_{Y}(\chi_{h}(t))) \big\ra.
\end{align}
Note that
\begin{align}
     \big\la\big(\nabla U (Y^{i,N}(\chi_{h}(t)))\big)\cdot   F(Y^{i,N}(\chi_{h}(t)),  \Sigma^{N}_{Y}(\chi_{h}(t))) \big\ra\leq 0
\end{align}
due to the fact that $\Sigma^{N}(\chi_{h}(t))$ is ensemble covariance and hence positive semi-definite matrix (see (\ref{ildo_eqn_3.25})).  
We will now utilize (\ref{ildo_eqn_3.29}) to get 
\begin{align}
    A_1 & \leq \bigg|\sum_{j=1}^{d}F_{j}\bigg( \int_{\chi_{h}(t)}^{t}\la \nabla G_{j} (Y^{i,N}(s)) \cdot F(Y^{i,N}(\chi_{h}(s)),  \Sigma^{N}_{Y}(\chi_{h}(s)))\ra \de s \bigg)\bigg|
   \nonumber \\ &   \quad 
    + \bigg|\sum_{j=1}^{d}F_{j} \int_{\chi_{h}(t)}^{t}\frac{1}{\beta}\trace\big( \nabla^2 G_{j}( Y^{i,N}(s)) \Sigma^{N}_{Y}(\chi_{h}(s))\big) \de s\bigg|
    \nonumber \\ &   \quad 
    + \bigg|\sum_{j=1}^{d}F_{j} \int_{\chi_{h}(t)}^{t}\sqrt{\frac{2}{\beta N}}\big\la \nabla G_{j}(Y^{i,N}(s)) \cdot Q^{N}_{Y}(\chi_{h}(s)) \de B^{i}(s)\big\ra\bigg|, \label{eldo_eqn_4.42}
\end{align}
where $F_{j}$ denotes the $j$-th component of $F(Y^{i,N}(\chi_{h}(t)), \C^{N}_{Y}(\chi_{h}(t)))$.

Using (\ref{tame_bound_3.1}) and the fact that Frobenius norm of Hessian of $U$ is bounded, we obtain
\begin{align}
    \bigg|\sum_{j=1}^{d}& F_{j}\bigg( \int_{\chi_{h}(t)}^{t}\la \nabla G_{j} (Y^{i,N}(s)) \cdot F(Y^{i,N}(\chi_{h}(s)),  \Sigma^{N}_{Y}(\chi_{h}(s)))\ra \de s \bigg)\bigg| 
   \nonumber \\   & 
   \leq  \frac{1}{h^{\alpha}} \sum_{j=1}^{d}\bigg|\bigg( \int_{\chi_{h}(t)}^{t}\la \nabla G_{j} (Y^{i,N}(s)) \cdot F(Y^{i,N}(\chi_{h}(s)),  \Sigma^{N}_{Y}(\chi_{h}(s)))\ra \de s \bigg)\bigg|
    \nonumber \\   & 
   \leq  \frac{1}{h^{2\alpha}}\int_{\chi_{h}(t)}^{t}\sum_{j=1}^{d}\big| \nabla G_{j}(Y^{i,N}(s))\big| \de s  \leq  C h^{1 - 2\alpha}, \quad \label{eldo_eqn_4.43}
\end{align}
where $C>0$ is independent of $h$ and $N$. 

Dealing with the second term on right hand side of (\ref{eldo_eqn_4.42}) results in
\begin{align}
     \bigg|\sum_{j=1}^{d}&F_{j} \int_{\chi_{h}(t)}^{t}\frac{1}{\beta} \trace\big(\nabla^2 G_{j}( Y^{i,N}(s)) \Sigma^{N}_{Y}(\chi_{h}(s))\big) \de s\bigg| 
     \nonumber \\   &  
     \leq  C\frac{1}{h^{\alpha}} \int_{\chi_{h}(t)}^{t}\sum_{j=1}^{d} | \nabla^2 G_{j}(Y^{i,N}(s))| | \Sigma^{N}_{Y}(\chi_{h}(s))| \de s
     \leq  C\frac{1}{h^{\alpha}} \int_{\chi_{h}(t)}^{t} | \Sigma^{N}_{Y}(\chi_{h}(s))| \de s
       \nonumber \\   &   
       \leq C h^{1- \alpha} | \Sigma^{N}_{Y}(\chi_{h}(t))| 
\end{align}
since the third and fourth derivatives of $U$ are bounded. Note that
\begin{align}\label{eq:SigmaYBound}
   | \Sigma^{N}_{Y}(t)| \leq  C\Big(1 + \frac{1}{N}\sum_{i=1}^{N}|Y^{i,N}(t)|^{2}\Big)
   \leq  C\Big(1 + \frac{1}{N}\sum_{i=1}^{N}U(Y^{i,N}(t))\Big)
\end{align}
since $U $ has at least quadratic growth outside a compact set $\mathcal{K}$ (see Assumption~\ref{second_assum_on_U}). 
Consequently, we obtain
\begin{align}
     \bigg|\sum_{j=1}^{d}F_{j} \int_{\chi_{h}(t)}^{t}\frac{1}{\beta} \trace\big(\nabla^2 G_{j}( Y^{i,N}(s)) \Sigma^{N}_{Y}(\chi_{h}(s))\big) \de s\bigg|    
     \leq C h^{1 - \alpha} \Big(1 + \frac{1}{N}\sum_{i=1}^{N}U(X^{i,N}(\chi_{h }(t)))\Big).  \label{eldo_eqn_4.46}
\end{align}

In the similar manner, we have the following bound for the third term on the right hand side of (\ref{eldo_eqn_4.42}):
\begin{align}
    \bigg|\sum_{j=1}^{d}F_{j}& \int_{\chi_{h}(t)}^{t}\sqrt{\frac{2}{\beta}}\big\la \nabla G_{j}(Y^{i,N}(s)) \cdot Q^{N}_{Y}(\chi_{h}(s)) \de B^{i}(s)\big\ra\bigg|  
   \nonumber \\ & 
   \leq  \frac{1}{h^{\alpha}} \sum_{j=1}^{d}  \bigg|\int_{\chi_{h}(t)}^{t}\sqrt{\frac{2}{\beta N}}\big\la \nabla G_{j}(Y^{i,N}(s)) \cdot Q^{N}_{Y}(\chi_{h}(s)) \de B^{i}(s)\big\ra\bigg|. \label{eqn_eldo_4.47}
\end{align}
Combining (\ref{eldo_eqn_4.43}), (\ref{eldo_eqn_4.46}) and (\ref{eqn_eldo_4.47}) yields
\begin{align}
   A_{1} &\leq  C  h^{1 - 2\alpha} + C h^{1 - \alpha} \Big(1 + \frac{1}{N}\sum_{i=1}^{N}U(Y^{i,N}(\chi_{h }(t)))\Big) 
    \nonumber \\  &  \quad
    +  \frac{1}{h^{\alpha}} \sum_{j=1}^{d}  \bigg|\int_{\chi_{h}(t)}^{t}\sqrt{\frac{2}{\beta N}}\big\la \nabla G_{j}(Y^{i,N}(s)) \cdot Q^{N}_{Y}(\chi_{h}(s)) \de B^{i}(s)\big\ra\bigg|. \label{eqn_eldo_4.48}
\end{align}

To estimate a bound on second term of (\ref{eldo_eqn_4.37}), we utilize Cauchy-Bunyakovsky-Schwarz inequality to obtain
\begin{align}
    A_{2} :&= U^{p-2}(Y^{i,N}(t)) \trace\big(\nabla U(Y^{i,N}(t))\nabla U ^{\top}(Y^{i,N}(t))  \Sigma^{N}_{Y}(\chi_{h}(t))\big) 
    \nonumber \\ & 
    \leq C U^{p-2}(Y^{i,N}(t)) |\nabla U(Y^{i,N}(t))|^{2} |\Sigma^{N}_{Y}(\chi_{h}(t))|  \nonumber \\ & 
    \leq C(1 + U^{p-1}(Y^{i,N}(t))) \bigg( 1+ \frac{1}{N}\sum_{i=1}^{N}U(Y^{i,N}(\chi_{h}(t)))\bigg), \label{eqn_eldo_4.49}  
\end{align}
where we use Assumption~\ref{second_assum_on_U} and \eqref{eq:SigmaYBound} in the last step.

Analogously, for the third term of \eqref{eldo_eqn_4.37}, application of Cauchy-Bunyakovsky-Schwarz inequality yields
\begin{align}
     A_{3} :&= U^{p-1}(Y^{i,N}(t)) \trace\big( \nabla^2 U( Y^{i,N}(t)) \Sigma^{N}_{Y}(\chi_{h}(t))\big) 
     \nonumber \\ &
     \leq  C U^{p-1}(Y^{i,N}(t)) \bigg( 1+ \frac{1}{N}\sum_{i=1}^{N}U(Y^{i,N}(\chi_{h}(t)))\bigg), \label{eqn_eldo_4.50}
 \end{align}
 where we have utilized the fact that the norm of Hessian of potential function is bounded due to Assumption~\ref{second_assum_on_U}.
 
 Substituting (\ref{eqn_eldo_4.48}), (\ref{eqn_eldo_4.49}) and (\ref{eqn_eldo_4.50}) in (\ref{eldo_eqn_4.37}), we obtain 
 \begin{align}
      U^{p}(Y^{i,N}(t)) & \leq  U^{p}(Y^{i,N}(0)) + C h^{1 - 2\alpha } \int_{0}^{t}U^{p-1}(Y^{i,N}(s))\de s 
      \nonumber \\  & \quad 
      + C h^{1-\alpha}\int_{0}^{t}U^{p-1}(Y^{i,N}(s)) \bigg( 1 + \frac{1}{N}\sum_{i=1}^{N}U(Y^{i,N}(\chi_{h}(s)))\bigg) ~ \de s 
       \nonumber \\  & \quad 
      +  p \int_{0}^{t}U^{p-1}(Y^{i,N}(s))\frac{1}{h^{\alpha}} \sum_{j=1}^{d}  \bigg|\int_{\chi_{h}(s)}^{s}\sqrt{\frac{2}{\beta}}\big\la \nabla G_{j}(Y^{i,N}(r)) \cdot Q^{N}_{Y}(\chi_{h}(r)) \de B^{i}(r)\big\ra\bigg|\de s
       \nonumber \\  & \quad 
       + C \int_{0}^{t} U^{p-1}(Y^{i,N}(s))\bigg( 1 + \frac{1}{N}\sum_{i=1}^{N}U(Y^{i,N}(\chi_{h}(s)))\bigg)\de s
        \nonumber \\  & \quad 
        +  p\bigg|\int_{0}^{t}\sqrt{\frac{2}{\beta N}}U^{p-1}(Y^{i,N}(s))\la\nabla U(Y^{i,N}(s)) \cdot Q^{N}_{Y} (\chi_{h}(s)) \de B^{i}(s)\ra\bigg|. 
    \end{align}
Taking supremum over $[0,T]$ and then expectation on both sides, we ascertain
\begin{align*}
   \E  &\sup_{t \in[0,T]}  U^{p}(Y^{i,N}(t))  \leq  U^{p}(Y^{i,N}(0)) + C h^{1 - 2\alpha } \E    \int_{0}^{T} U^{p-1}(Y^{i,N}(s)) \de t 
   \\  & 
  +  C h^{1-\alpha}\E\int_{0}^{T} U^{p-1}(Y^{i,N}(s)) \bigg( 1 + \frac{1}{N}\sum_{i=1}^{N}U(Y^{i,N}(s))\bigg) ~ \de t
   \nonumber \\  & 
      +  p \E\int_{0}^{T}U^{p-1}(Y^{i,N}(s))\frac{1}{h^{\alpha}} \sum_{j=1}^{d}  \bigg|\int_{\chi_{h}(s)}^{s}\sqrt{\frac{2}{\beta}}\big\la \nabla G_{j}(Y^{i,N}(r)) \cdot \Sigma^{N}(\chi_{h}(r)) \de B^{i}(r)\big\ra\bigg|\de s
      \\ &
    +  \E\int_{0}^{T} U^{p-1}(Y^{i,N}(s)) \bigg( 1 + \frac{1}{N}\sum_{i=1}^{N}U(Y^{i,N}(s))\bigg) ~ \de t 
    \\ & 
   + p \E \sup_{t \in [0,T]}\bigg|\int_{0}^{t}\sqrt{\frac{2}{\beta N}}U^{p-1}(Y^{i,N}(s))\la\nabla U(Y^{i,N}(s)) \cdot Q^{N}_{Y} (\chi_{h}(s)) \de B^{i}(s)\ra\bigg|. \numberthis \label{eldo_eqn_3.43}
\end{align*}

We will first estimate the following term appearing on the right hand side of above inequality:
\begin{align*}
    D_{1} :&=\E\int_{0}^{T}U^{p-1}(Y^{i,N}(s))  \frac{1}{h^{\alpha}}\sum_{j=1}^{d}  \bigg|\int_{\chi_{h}(s)}^{s}\sqrt{\frac{2}{\beta }}\big\la \nabla G_{j}(Y^{i,N}(r)) \cdot Q^{N}_{Y}(\chi_{h}(r)) \de B^{i}(r)\big\ra\bigg|\de s
    \\ & 
    \leq  C\int_{0}^{T}\E \sup_{s\in [0,t]} U^{p}(Y^{i,N}(s)) \de t 
    \\  & \quad 
    + C \frac{1}{h^{\alpha p}} \int_{0}^{T}\sum_{j=1}^{d} \E\bigg| \int_{\chi_{h}(s)}^{s}\sqrt{\frac{2}{\beta N}}\big\la \nabla G_{j}(Y^{i,N}(r)) \cdot Q^{N}_{Y}(\chi_{h}(r)) \de B^{i}(r)\big\ra\bigg|^{p}\de s,
\end{align*}
where we have used generalized Young's inequality. Applying the Burkholder-Davis-Gundy inequality, and using boundedness of norm of Hessian of $U$ and \eqref{eq:SigmaYBound}, we obtain
\begin{align*}
    D_{1} &\leq C\int_{0}^{T}\E \sup_{s\in [0,t]} U^{p}(Y^{i,N}(s)) \de t
     +  \frac{C}{h^{\alpha p}} \int_{0}^{T}\sum_{j=1}^{d} \E \bigg(\int_{\chi_{h}(s)}^{s} | \nabla G_{j}(Y^{i,N}(r))|^{2} |\Sigma^{N}_{Y}(\chi_{h}(r))| \de r~ \bigg)^{p/2} \de s
     \\ &
     \leq C\int_{0}^{T}\E \sup_{s\in [0,t]} U^{p}(Y^{i,N}(s)) \de t 
     + \frac{C}{h^{\alpha p}} \int_{0}^{T}\E \bigg(\int_{\chi_{h}(s)}^{s}  \Big( 1 + \frac{1}{N}\sum_{j=1}^{N}U(Y^{j,N}(\chi_{h}(r))) \Big)^{2}\de r~ \bigg)^{p/2} \de s
     \\ &
     \leq C\int_{0}^{T}\E \sup_{s\in [0,t]} U^{p}(Y^{i,N}(s)) \de t 
     + C h^{p( 1/2 - \alpha )} \int_{0}^{T} \E  \Big( 1 + \frac{1}{N}\sum_{j=1}^{N}U(Y^{j,N}(\chi_{h}(s))) \bigg)^{p} \de s
     \\ &
     \leq C\int_{0}^{T}\E \sup_{s\in [0,t]} U^{p}(Y^{i,N}(s)) \de t
     + C h^{p( 1/2 - \alpha )} \int_{0}^{T} \E \sup_{s \in[0,t]} \Big( 1 + \frac{1}{N}\sum_{j=1}^{N}U^{p}(Y^{j,N}(s)) \bigg) \de t. \numberthis  \label{eldo_eqn_3.44}
\end{align*}
The last term remaining to be dealt with is \begin{align*}
    D_{2} := \E \sup_{t \in [0,T]}\bigg|\int_{0}^{t}\sqrt{\frac{2}{\beta N}}U^{p-1}(Y^{i,N}(s))\la\nabla U(Y^{i,N}(s)) \cdot Q^{N}_{Y} (\chi_{h}(s)) \de B^{i}(s)\ra\bigg|.
\end{align*}
Again using the Burkholder-Davis-Gundy inequality and \eqref{eq:SigmaYBound}, we get
\begin{align*}
    D_{2} &\leq   \E \bigg(\int_{0}^{T} \frac{2}{\beta}U^{2p-2}(Y^{i,N}(s))|\nabla U(Y^{i,N}(s))|^{2}|\C_{Y}^{N}(\chi_{h}(s))| \de s\bigg)^{1/2}
    \\& 
   \leq   \E \Bigg(\sup_{s \in [0,T]} U^{p-1}(Y^{i,N}(s)) \bigg(\int_{0}^{T} \frac{2}{\beta } (1 +  U(Y^{i,N}(s))) \Big( 1 + \frac{1}{N}\sum_{j=1}^{N} U(Y^{j,N}(\chi_{h}(s)))\Big) \de s \bigg)^{1/2}
    \\  & 
   \leq    \frac{1}{2}\E \big(\sup_{s \in [0,T]} U^{p}(Y^{i,N}(s))\big)  + C\E\bigg(\int_{0}^{T} \frac{2}{\beta} (1 +  U(Y^{i,N}(s))) \Big( 1 + \frac{1}{N}\sum_{j=1}^{N} U(Y^{j,N}(\chi_{h}(s)))\Big) \de s \bigg)^{p/2},
\end{align*}
where we have used generalized Young's inequality. Using Hölder's inequality, we have
\begin{align*}
 D_{2} &\leq     \frac{1}{2}\E \big(\sup_{s \in [0,T]} U^{p}(Y^{i,N}(s))\big)  + C\E\bigg(\int_{0}^{T}(1 +  U^{p/2}(Y^{i,N}(s))) \Big( 1 + \frac{1}{N}\sum_{j=1}^{N} U(Y^{j,N}(\chi_{h}(s)))\Big)^{p/2} \de s \bigg)
 \\ &  
 \leq  \frac{1}{2}\E \big(\sup_{t \in [0,T]} U^{p}(Y^{i,N}(t))\big) + C\int_{0}^{T} \Big( 1 + \frac{1}{N}\sum_{j=1}^{N} \E \sup _{s \in [0,t]}U^{p}(Y^{j,N}(s)) \Big)\de t \numberthis \label{eldo_eqn_3.45}
\end{align*}
Plugging the estimates obtained in (\ref{eldo_eqn_3.44}) and (\ref{eldo_eqn_3.45}) into (\ref{eldo_eqn_3.43}), we get 
\begin{align*}
     \E  \sup_{t \in[0,T]}  U^{p}(Y^{i,N}(t))  &\leq  2U^{p}(Y^{i,N}(0)) + C\int_{0}^{T} \Big( 1 + \frac{1}{N}\sum_{j=1}^{N} \E \sup _{s \in [0,t]}U^{p}(Y^{j,N}(s)) \Big)\de t  
     \\ & + C\int_{0}^{T}\E \sup_{s\in [0,t]} U^{p}(Y^{i,N}(s)) \de t  
\end{align*}
since $h^{1-2\alpha} \leq 1$ due to our choice of $\alpha \in (0,1/2)$. We mention again for the convenience of the reader that $C $ is a positive constant independent of $h$ and $N$. Taking supremum over $i \in \{1,\dots, N\}$ gives
\begin{align*}
  \sup_{i=1,\dots,N}  \E  \sup_{t \in[0,T]}  U^{p}(Y^{i,N}(t))\leq  2U^{p}(Y^{i,N}(0)) + C\int_{0}^{T}\sup_{i=1,\dots,N}\E \sup_{s\in [0,t]} U^{p}(Y^{i,N}(s)) \de t  
\end{align*}
which on applying Grönwall's Lemma provides the desired result. 
\end{proof}

\begin{lemma}\label{cont_step_error_lemma}
Let Assumption~\ref{second_assum_on_U} be satisfied. Let $\sup_{1 \leq i \leq N} \E | X^{i,N}(0)|^{2} < \infty $ and $\sup_{1 \leq i \leq N} \E | Y^{i,N}(0)|^{2} < \infty $. Then the following bound holds:
\begin{align}
    \sup_{1 \leq i \leq N} \E | Y^{i,N}(t) - Y^{i,N}(\chi_{h}(t))|^{2} \leq Ch,
\end{align}
where $C >0$ is independent of $N$ and $h$.
\end{lemma}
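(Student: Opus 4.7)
The plan is to split $Y^{i,N}(t) - Y^{i,N}(\chi_h(t))$ into its drift and diffusion increments, bound each in $L^2$ separately, and conclude via $|a+b|^2 \le 2|a|^2 + 2|b|^2$. From the continuous-time scheme \eqref{cont_baleuler_ips}, for any $t \in [0,T]$ and $i$,
\begin{align*}
Y^{i,N}(t) - Y^{i,N}(\chi_h(t)) = \int_{\chi_h(t)}^{t} F(Y^{i,N}(\chi_h(s)), \Sigma^N_Y(\chi_h(s))) \, \de s + \int_{\chi_h(t)}^{t}\sqrt{\tfrac{2}{\beta}}\, Q^N_Y(\chi_h(s))\, \de B^i(s).
\end{align*}

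For the drift, I would exploit the structural taming bound \eqref{tame_bound_3.1}, which gives $|F(\cdot,\cdot)| \le h^{-\alpha}$ pointwise. Applying the Cauchy-Bunyakovsky-Schwarz inequality on the time integral then yields
\begin{align*}
\left| \int_{\chi_h(t)}^{t} F(Y^{i,N}(\chi_h(s)), \Sigma^N_Y(\chi_h(s))) \, \de s \right|^2 \le h \int_{\chi_h(t)}^{t} |F|^2 \de s \le h^{2-2\alpha}.
\end{align*}
Since $\alpha \in (0,1/2)$, the exponent $2-2\alpha > 1$, so (assuming $h \le 1$, which is harmless for the asymptotic $h \to 0$) this is bounded by $Ch$ with $C$ independent of $N$.

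For the diffusion, I would apply Ito's isometry to obtain
\begin{align*}
\E \left| \int_{\chi_h(t)}^{t}\sqrt{\tfrac{2}{\beta}}\, Q^N_Y(\chi_h(s))\, \de B^i(s) \right|^2 = \tfrac{2}{\beta}\, \E \int_{\chi_h(t)}^{t} \operatorname{tr}\!\bigl(Q^N_Y(\chi_h(s))\, Q^N_Y(\chi_h(s))^\top\bigr) \de s,
\end{align*}
and then bound this trace by a constant multiple of $|\Sigma^N_Y(\chi_h(s))|$ (up to the $N$-factor that matches the normalization in the SDE). The essential point is then the uniform-in-$N$ and uniform-in-$h$ estimate on $\E |\Sigma^N_Y(\chi_h(s))|$, which follows from Lemma~\ref{mom_bo_dis_part_ildo}: since $|\Sigma^N_Y| \le C(1 + N^{-1}\sum_j |Y^{j,N}|^2)$ by the computation \eqref{eq:SigmaYBound}, the second-moment bound of Lemma~\ref{mom_bo_dis_part_ildo} with $p=1$ gives $\sup_{s,N} \E |\Sigma^N_Y(\chi_h(s))| \le C$. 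Consequently the diffusion term is bounded by $Ch$ uniformly in $i$ and $N$.

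The main (minor) obstacle is making sure that the uniform-in-$N$ moment bound of Lemma~\ref{mom_bo_dis_part_ildo} is what propagates into the covariance bound; this is immediate from \eqref{eq:SigmaYBound}. Once both pieces are in place, summing the two bounds yields the asserted $\E|Y^{i,N}(t) - Y^{i,N}(\chi_h(t))|^2 \le Ch$ with $C$ independent of $i$, $N$ and $h$, after which taking the supremum over $i$ finishes the proof.
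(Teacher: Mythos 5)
Your proposal is correct and follows essentially the same route as the paper: you decompose the increment into drift and diffusion parts, use the deterministic taming bound $|F|\le h^{-\alpha}$ together with Cauchy--Schwarz on the drift to get $Ch^{2(1-\alpha)}\le Ch$, and use It\^o's isometry plus the uniform moment bound of Lemma~\ref{mom_bo_dis_part_ildo} (through the covariance estimate \eqref{eq:SigmaYBound}) on the diffusion to get another $Ch$.
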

\begin{proof}
With an application of triangle inequality and Hölder's inequality, we have
\begin{align}
| &Y^{i,N}(t) - Y^{i,N}(\chi_{h}(t))|^{2} \leq  C\bigg( \bigg| \int_{\chi_{h}(t)}^{t} F (Y^{i,N}(\chi_{h}(s)), \Sigma^{N}_{Y}(\chi_{h}(s))) \de s \bigg|^{2} 
\nonumber  \\  & + \bigg| \int_{\chi_{h}(t)}^{t} \sqrt{\frac{2}{\beta N}}Q^{N}_{Y}(\chi_{h}(s)) \de B^{i}(s)\bigg|^{2}\bigg)
\nonumber  \\  &  
\leq C\int_{\chi_{h}(t)}^{t} |1|^{2} \de s \int_{\chi_{h}(t)}^{t} |F (Y^{i,N}(\chi_{h}(s)), \Sigma^{N}_{Y}(\chi_{h}(s)))|^2  \de s  \nonumber   + \bigg| \int_{\chi_{h}(t)}^{t}\sqrt{\frac{2}{\beta N}} Q^{N}_{Y}(\chi_{h}(s)) \de B^{i}(s)\bigg|^{2} 
\nonumber \\  & 
\leq C h \int_{\chi_{h}(t)}^{t} |F (Y^{i,N}(\chi_{h}(s)), \Sigma^{N}_{Y}(\chi_{h}(s)))|^{2} \de s  
+ \bigg| \int_{\chi_{h}(t)}^{t}\sqrt{\frac{2}{\beta N}} Q^{N}_{Y}(\chi_{h}(s)) \de B^{i}(s)\bigg|^{2} \nonumber 
\end{align}
which on using (\ref{tame_bound_3.1}) gives
\begin{align}
    | Y^{i,N}(t) - Y^{i,N}(\chi_{h}(t))|^{2} \leq Ch^{2(1-\alpha)} + C \bigg| \int_{\chi_{h}(t)}^{t}\sqrt{\frac{2}{\beta N}} Q^{N}_{Y}(\chi_{h}(s)) \de B^{i}(s)\bigg|^{2}.
\end{align}
Taking expectation on both sides and applying Ito's isometry, we achieve the following bound:
\begin{align}
    \E| Y^{i,N}(t) - Y^{i,N}(\chi_{h}(t))|^{2} & = Ch^{2(1-  \alpha)} + C\E\int_{\chi_{h}(t)}^{t} \frac{1}{N} | Q^{N}_{Y}(\chi_{h}(s)) \big( Q^{N}_{Y}(\chi_{h}(s)) \big)^{\top}| \de s
    \\ & \nonumber
    \leq Ch + C\E\int_{\chi_{h}(t)}^{t} | \Sigma^{N}_{Y}(\chi_{h}(s))| \de s 
     \leq Ch,
\end{align}
where $C>0$ is independent of $N$ and $h$. Hence, the lemma is proved. 
\end{proof}

To obtain our main result, we employ a localization strategy based on stopping times. This is also employed in the interacting particle setting in  \cite{tretyakov_jumpcbo_2023} (for the propagation of chaos proof, as well as for for the proof of the Euler scheme) and \cite{vaes2024sharpchaos} (for the propagation of chaos proof). More specifically, let
\begin{align}
    \tau^{1}_{R} &= \inf \bigg\{ t \leq 0 \; ; \; \frac{1}{N}\sum_{i=1}^{N }|X^{i,N}(t)|^{2} \geq R\bigg\},  \\
     \tau^{2}_{R} &= \inf \bigg\{ t \leq 0 \; ; \; \frac{1}{N}\sum_{i=1}^{N }|Y^{i,N}(t)|^{2} \geq R\bigg\}, 
\end{align}
and $\tau_{R} = \tau^{1}_{R} \wedge \tau^{2}_{R}$.

\begin{lemma}\label{intermediate_lemma}
    Let Assumption~\ref{second_assum_on_U} hold. Then the following inequality is satisfied:
    \begin{align*}
    \mathbb{E} \int_{0}^{t \wedge\tau_{R}} & |\Sigma(\cE^{N}(s))\nabla U(X^{i,N}(s)) + F(Y^{i,N}(\chi_{h}(s))))|^2 \de s 
    \leq 
    C h^{2\alpha} 
    \\  & 
    +  C (1 +  R)^2 \E \int_{0}^{t }  \bigg(|X^{i,N}(s\wedge \tau_{R}) - Y^{i,N}(s\wedge \tau_{R})|^{2} 
    \\  &  + |Y^{i,N}(s\wedge \tau_{R}) - Y^{i,N}(\chi_{h}(s\wedge \tau_{R}))|^{2}
    +   \frac{1}{N}\sum_{j =1}^{N} |Y^{j,N}(s\wedge \tau_{R}) - Y^{j,N}(\chi_{h}(s\wedge \tau_{R}))|^{2}
    \\  & 
     + \frac{1}{N}\sum\limits_{j=1}^{N} |X^{j,N}(s\wedge \tau_{R}) - Y^{j,N}(s \wedge \tau_{R})|^{2}
    \bigg)\de s
    \end{align*}
    where $X^{i,N}(s)$ is from (\ref{non_langevin_sde_weak_ips}), $Y^{i,N}(s)$ is from  (\ref{cont_baleuler_ips}) and $C >0$ is independent of $h$, $N$ and $R$. 
\end{lemma}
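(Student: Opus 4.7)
I would begin by isolating the taming correction from the remaining drift discrepancy. Letting $G := \Sigma^N_Y(\chi_h(s))\nabla U(Y^{i,N}(\chi_h(s)))$, write
\[
\Sigma(\cE^{N}(s))\nabla U(X^{i,N}(s)) + F = \underbrace{\Sigma(\cE^{N}(s))\nabla U(X^{i,N}(s)) - G}_{=:\,D_1} \;+\; \underbrace{G - \frac{G}{1 + h^\alpha |G|}}_{=:\,D_2},
\]
apply $|a+b|^2 \leq 2|a|^2+2|b|^2$, and bound the two pieces separately. The term $D_2$ accounts for $Ch^{2\alpha}$ in the lemma: a direct calculation gives $|D_2| \leq h^\alpha |G|^2$, and combining the bound $|\Sigma^N_Y(\chi_h)| \leq C\bigl(1 + \tfrac{1}{N}\sum_j|Y^{j,N}(\chi_h)|^2\bigr)$ (as in \eqref{eq:SigmaYBound}), the growth estimate $|\nabla U(y)|^2 \leq C(1+|y|^2)$ from Assumption~\ref{second_assum_on_U}, and the uniform-in-$N, h, s$ moment bounds of Lemma~\ref{mom_bo_dis_part_ildo}, the fourth moment of $G$ is bounded uniformly, so $\E\int_0^T |D_2|^2 \de s \leq Ch^{2\alpha}$ independently of $R$.

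For $D_1$, I would split further as $D_1 = \Sigma(\cE^N)[\nabla U(X^{i,N}) - \nabla U(Y^{i,N}(\chi_h))] + [\Sigma(\cE^N) - \Sigma^N_Y(\chi_h)]\nabla U(Y^{i,N}(\chi_h))$. The first summand uses the bounded Hessian of $U$ (Assumption~\ref{second_assum_on_U}) to obtain $|\nabla U(X^{i,N}) - \nabla U(Y^{i,N}(\chi_h))| \leq C\bigl(|X^{i,N}-Y^{i,N}| + |Y^{i,N}-Y^{i,N}(\chi_h)|\bigr)$, and under $\tau_R$ one has $|\Sigma(\cE^N(s))| \leq C(1+R)$. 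Squaring then contributes $C(1+R)^2\bigl(|X^{i,N}-Y^{i,N}|^2 + |Y^{i,N}-Y^{i,N}(\chi_h)|^2\bigr)$, matching the form on the right. For the second summand, the identity $aa^\top - bb^\top = (a-b)a^\top + b(a-b)^\top$ applied termwise in the sum defining $\Sigma(\cE^N) - \Sigma^N_Y(\chi_h)$, together with Cauchy-Schwarz on the index $j$ and the stopping-time bounds $\tfrac{1}{N}\sum_j(|X^{j,N} - \bar{X}^N|^2 + |Y^{j,N}(\chi_h)-\bar{Y}^N(\chi_h)|^2) \leq CR$, yields
\[
|\Sigma(\cE^N) - \Sigma^N_Y(\chi_h)|^2 \leq C(1+R)\cdot \frac{1}{N}\sum_j \bigl(|X^{j,N}-Y^{j,N}|^2 + |Y^{j,N}-Y^{j,N}(\chi_h)|^2\bigr).
\]

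\textbf{Main obstacle.} The delicate step is absorbing the remaining factor $|\nabla U(Y^{i,N}(\chi_h))|^2$ multiplying this covariance difference: the growth bound $|\nabla U|^2 \leq C(1+|Y^{i,N}(\chi_h)|^2)$ is an \emph{individual}-particle quantity, while $\tau_R$ only controls the \emph{empirical average} $\tfrac{1}{N}\sum_j|Y^{j,N}(\chi_h)|^2 \leq R$. I expect to resolve this by taking expectation first and using the uniform-in-$N$ moment bounds $\sup_i\E|Y^{i,N}(s)|^{2p} \leq C$ from Lemma~\ref{mom_bo_dis_part_ildo}, together with the symmetry of the particle system (identical laws across $i$), which allows replacing the individual factor by an average that is controlled by $C(1+R)$ under $\tau_R$. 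Assembling the three pieces via $|a+b+c|^2 \leq 3(|a|^2+|b|^2+|c|^2)$, and noting that the arguments inside the expectation are already evaluated at times $s \wedge \tau_R$ so that the integration can be extended from $t\wedge\tau_R$ to $t$ without loss, yields the claimed estimate.
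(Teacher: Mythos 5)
Your proposal is correct and follows essentially the same route as the paper: peel off the taming term to get the $Ch^{2\alpha}$ contribution via moment bounds from Lemma~\ref{mom_bo_dis_part_ildo}, split the remaining drift discrepancy into a gradient difference and a covariance difference, localize with $\tau_R$ to trade the empirical averages for $(1+R)$, and invoke exchangeability to replace the individual factor $|\nabla U(Y^{i,N})|^2$ by an empirical average controllable under $\tau_R$. The only cosmetic difference is that you compress the paper's three-term telescoping (I)+(II)+(III), which passes through the intermediate point $\Sigma^N_Y(s)\nabla U(Y^{i,N}(s))$, into two terms and absorb both the $X-Y$ and $Y(s)-Y(\chi_h(s))$ discrepancies inside $D_1$ via the triangle inequality; the key steps and bounds are the same.
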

\begin{proof}

    By repeated use of the triangle inequality, we can split the integrand as
    \begin{align*}
    & |\Sigma^{N}(s)\nabla U(X^{i,N}(s)) + F(Y^{i,N}(\chi_{h}(s)))|^2 
     \leq  |\Sigma^{N}(s)\nabla U(X^{i,N}(s)) - \Sigma^{N}_{Y}(s)\nabla U(Y^{i,N}(s))|^2 
     \\ & \quad 
     +   |\Sigma^{N}_{Y}(s)\nabla U(Y^{i,N}(s))  - \Sigma_{Y}^{N}(\chi_{h}(s))\nabla U(Y^{i,N}(\chi_{h}(s)))|^2
      \\ & \quad 
     + |\Sigma^{N}_{Y}(\chi_{h}(s))\nabla U(Y^{i,N}(\chi_{h}(s)))  + F(Y^{i,N}(\chi_{h}(s)))|^2
     \numberthis \label{ildo_eqn_3.53}
     \\  &  =: (\text{I}) + (\text{II}) + (\text{III}). \numberthis
\end{align*}
We will analyze the three terms separately, starting with (I). To this end, we first split the term (I)  further as follows:
    \begin{align*}
    \text{(I)}  :&=   |\Sigma^{N}(s)\nabla U(X^{i,N}(s)) - \Sigma^{N}_{Y}(s)\nabla U(Y^{i,N}(s)))|^2 
    \leq |\Sigma^{N}(s) (\nabla U(X^{i,N}(s)) - \nabla U(Y^{i,N}(s)))|^2 
    \\ & \quad 
    +|(\Sigma^{N}(s)  -  \Sigma^{N}_{Y}(s))\nabla U(Y^{i,N}(s))|^2
     \\ & 
    =: B_{1} + B_{2}. \numberthis \label{eldo_eqn_3.55}
    \end{align*}
We have the following bound for $B_1$ due to \eqref{elo_cov_bound} and the fact that $\nabla U $ is Lipschitz:
\begin{align*}
  B_{1} &:= |\Sigma^{N}(s) (\nabla U(X^{i,N}(s)) - \nabla U(Y^{i,N}(s)))|^2 
  \leq |\Sigma^{N}(s)|^2 |\nabla U(X^{i,N}(s)) - \nabla U(Y^{i,N}(s))|^2 
    \\ & \leq 
    C \Big[ 1 +\frac{1}{N}\sum_{j=1}^{N}|X^{j,N}(s)|^{2}\Big]^2  |X^{i,N}(s) - Y^{i,N}(s)|^{2}, \numberthis \label{eldo_eqn_4.64}
\end{align*}
where $C >0$ is a positive constant independent of $N$ and $h$.
Next, to bound $B_{2}$, note that 
\begin{align*}
    |Q^{N}(s) -  Q^{N}_{Y}(s)| & =  \Bigg(\sum_{i=1}^{N}\bigg| X^{i,N}(s) - Y^{i,N}(s) - \frac{1}{N}\sum_{j=1}^{N}(X^{j,N}(s) - Y^{j,N}(s))\bigg|^{2}\Bigg)^{1/2}
   \\  & 
   \leq \sqrt{2}\Bigg( \sum_{i=1}^{N}| X^{i,N}(s) - Y^{i,N}(s)|^{2} + N \bigg|\frac{1}{N}\sum_{j=1}^{N}(X^{j,N}(s) - Y^{j,N}(s))\bigg|^{2}\Bigg)^{1/2}.
\end{align*} 
This implies 
\begin{align*}
   \frac{1}{\sqrt{N}} |Q^{N}(s) -  Q^{N}_{Y}(s)| \leq 2 \Bigg( \frac{1}{N}\sum_{i=1}^{N}| X^{i,N}(s) - Y^{i,N}(s)|^{2}\Bigg)^{1/2}. \numberthis \label{3.56_ildo_eqn}
\end{align*}
In the similar manner, we have
\begin{align}
     \frac{1}{\sqrt{N}} |Q^{N}(s)| \leq 2 \bigg(\frac{1}{N}\sum\limits_{i=1}^{N}|X^{i,N}(s)|^{2}\bigg)^{1/2} \quad \text{and}\;\;
      \frac{1}{\sqrt{N}} |Q^{N}_{Y}(s)| \leq 2 \bigg(\frac{1}{N}\sum\limits_{i=1}^{N}|Y^{i,N}(s)|^{2}\bigg)^{1/2}. \label{3.58_ildo_eqn}
\end{align}
Using (\ref{3.56_ildo_eqn}) and (\ref{3.58_ildo_eqn}), we obtain
\begin{align*}
    |\Sigma^{N}(s)  -  \Sigma^{N}_{Y}(s)| & = \Big| \frac{1}{N} Q^{N}(s) (Q^{N}(s))^{\top}
    -  \frac{1}{N} Q^{N}_{Y}(s) (Q^{N}_{Y}(s))^{\top}\Big|
    \\  & 
  \leq  \Big| \frac{1}{N}  ( Q^{N}(s) - Q^{N}_{Y}(s)) (Q^{N}(s))^{\top}\Big| +  \Big|\frac{1}{N} Q^{N}_{Y}(s) ((Q^{N}(s))^{\top} - (Q^{N}_{Y}(s))^{\top})\Big|
 \\&  \leq C\bigg(\bigg(\frac{1}{N}\sum_{j=1}^{N}|X^{j,N}(s)|^{2}\bigg)^{1/2} + \bigg(\frac{1}{N}\sum_{j=1}^{N}|Y^{j,N}(s)|^{2}\bigg)^{1/2}\bigg)
 \\  & \quad \times \bigg( \frac{1}{N}\sum_{j =1}^{N} |X^{j,N}(s) - Y^{j,N}(s)|^{2}\bigg)^{1/2},
\end{align*}
where $C >0$ is a positive constant independent of $N$ and $h$. 
Using the above calculation we arrive at the following bound for $B_{2}$ in (\ref{eldo_eqn_3.55}):
\begin{align*}
    B_{2} :&=  |(\Sigma^{N}(s)  -  \Sigma^{N}_{Y}(s))\nabla U(Y^{i,N}(s))|^2
    \\ & 
     \leq C|\nabla U( Y^{i,N}(s))|^2 \bigg(\bigg(\frac{1}{N}\sum_{j=1}^{N}|X^{j,N}(s)|^{2}\bigg) + \bigg(\frac{1}{N}\sum_{j=1}^{N}|Y^{j,N}(s)|^{2}\bigg)\bigg)
    \\ &  \quad 
   \times  \bigg( \frac{1}{N}\sum_{j =1}^{N} |X^{j,N}(s) - Y^{j,N}(s)|^{2}\bigg) . \numberthis \label{eldo_eqn_4.65}
\end{align*}
Due to exchangeability of discretized particle system, we have for all $i =1,\dots, N$
\begin{align*}
    \mathbb{E}&\bigg(|\nabla U( Y^{i,N}(s\wedge \tau_R))|^{2}\bigg(\bigg(\frac{1}{N}\sum_{j=1}^{N}|X^{j,N}(s\wedge \tau_R)|^{2}\bigg) + \bigg(\frac{1}{N}\sum_{j=1}^{N}|Y^{j,N}(s\wedge \tau_R)|^{2}\bigg)\bigg) 
    \\  & \quad \times \bigg( \frac{1}{N}\sum_{j =1}^{N} |X^{j,N}(s\wedge \tau_{R}) - Y^{j,N}(s\wedge \tau_{R})|^{2}\bigg) \bigg) 
    \\  & 
    = \mathbb{E}\bigg(\frac{1}{N}\sum_{i=1}^{N}|\nabla U( Y^{i,N}(s\wedge \tau_{R}))|^{2}
    \bigg(\bigg(\frac{1}{N}\sum_{j=1}^{N}|X^{j,N}(s\wedge \tau_R)|^{2}\bigg) + \bigg(\frac{1}{N}\sum_{j=1}^{N}|Y^{j,N}(s\wedge \tau_R)|^{2}\bigg)\bigg) 
    \\  & \quad \times 
    \bigg( \frac{1}{N}\sum_{j =1}^{N} |X^{j,N}(s\wedge \tau_{R}) - Y^{j,N}(s\wedge \tau_{R})|^{2}\bigg) \bigg) .
\end{align*}

Using (\ref{eldo_eqn_4.64}) and (\ref{eldo_eqn_4.65}) yield
\begin{align*}
    \mathbb{E}&\int_{0}^{t \wedge \tau_R} (I) \de s \leq  C (1 + R)^2\int_{0}^{t}  \bigg( |X^{i,N}(s \wedge \tau_{R}) - Y^{i,N}(s\wedge \tau_R)|^{2} 
    \\  & \quad
    + \frac{1}{N}\sum\limits_{j=1}^{N} |X^{j,N}(s\wedge \tau_{R}) - Y^{j,N}(s \wedge \tau_{R})|^{2} \bigg) \de s \\  & 
     \leq  C (1 + R)^2\int_{0}^{t}  \E\bigg( |X^{i,N}(s \wedge \tau_{R}) - Y^{i,N}(s\wedge \tau_R)|^{2} 
    + \frac{1}{N}\sum\limits_{j=1}^{N} |X^{j,N}(s\wedge \tau_{R}) - Y^{j,N}(s \wedge \tau_{R})|^{2} \bigg) \de s \numberthis \label{eqn_ildo_3.57}
\end{align*}
In the similar manner, we have that
 \begin{align*}
    \text{(II)} :&=  |\Sigma_{Y}^{N}(s)\nabla U(Y^{i,N}(s))  - \Sigma^{N}_{Y}(\chi_{h}(s))\nabla U(Y^{i,N}(\chi_{h}(s)))|^2 
    \\&
    \leq |\Sigma^{N}_{Y}(s) (\nabla U(Y^{i,N}(s)) - \nabla U(Y^{i,N}(\chi_{h}(s))))|^2 
    \\ & \quad 
    +  |(\Sigma^{N}_{Y}(s)  -  \Sigma^{N}_{Y}(\chi_{h}(s)))\nabla U(Y^{i,N}(\chi_{h}(s)))|^2
     \\ & 
    =: E_{1} + E_{2}. \numberthis \label{eldo_eqn_4.63}
    \end{align*}
Applying the analogous arguments used in obtaining bounds for $B_1$ and $B_{2}$, we obtain
\begin{align*}
    E_{1} :&=  |\Sigma^{N}_{Y}(s) (\nabla U(Y^{i,N}(s)) - \nabla U(Y^{i,N}(\chi_{h}(s))))|^2 
    \\  & 
    \leq  C \bigg( 1 + \frac{1}{N}\sum_{j=1}^{N}|Y^{j,N}(s)|^{2}\bigg)^2
 \Big( |Y^{i,N}(s) - Y^{i,N}(\chi_{h}(s))|^{2}\Big),\numberthis \label{eldo_eqn_4.70_D1}
\end{align*}
and 
\begin{align*}
    E_{2} :&=  |(\Sigma^{N}_{Y}(s)  -  \Sigma^{N}_{Y}(\chi_{h}(s)))\nabla U(Y^{i,N}(\chi_{h}(s)))|^2
    \\ &
    \leq C\big(1 + |Y^{i,N}(\chi_{h}(s))|^2 \big)\bigg( 1 + \frac{1}{N}\sum_{j=1}^{N}|Y^{j,N}(s)|^{2} + \frac{1}{N}\sum_{j=1}^{N}|Y^{j,N}(\chi_{h}(s))|^{2}\bigg)
    \\  & \quad
   \times  \frac{1}{N}\sum_{j =1}^{N} |Y^{j,N}(s) - Y^{j,N}(\chi_{h}(s))|^{2}, \numberthis \label{eldo_eqn_4.71_D2}
\end{align*}
where $C >0$ is independent of $N$ and $h$. 

The bounds obtained in (\ref{eldo_eqn_4.70_D1}) and (\ref{eldo_eqn_4.71_D2}) imply that the following holds:
\begin{align*}
    \E &\int_{0}^{t \wedge \tau_{R}} (\text{II}) \de s \leq  C (1 +  R)^2 \E \int_{0}^{t }  \bigg(|Y^{i,N}(s\wedge \tau_{R}) - Y^{i,N}(\chi_{h}(s\wedge \tau_{R}))|^{2}
    \\  & +   \frac{1}{N}\sum_{j =1}^{N} |Y^{j,N}(s\wedge \tau_{R}) - Y^{j,N}(\chi_{h}(s\wedge \tau_{R}))|^{2} \bigg) \de s \numberthis \label{eqn_ildo_3.61}
\end{align*}
where we have used Young's inequality in last step.

We shift our attention to the remaining term, i.e., to term (III) in (\ref{ildo_eqn_3.53}), which is defined as
\begin{align*}
 \text{(III)} :&=    |\Sigma_{Y}(\chi_{h}(s))\nabla U(Y^{i,N}(\chi_{h}(s)))  + F(Y^{i,N}(\chi_{h}(s)))|^{2}.
\end{align*}
Note that
\begin{align*}
  |\Sigma^{N}_{Y}(\chi_{h}(s))& \nabla U(Y^{i,N}(\chi_{h}(s)))  + F(Y^{i,N}(\chi_{h}(s)))|^2  
  \\  & 
  = \bigg|\Sigma^{N}_{Y}(\chi_{h}(s))\nabla U(Y^{i,N}(\chi_{h}(s)))  - \frac{ \Sigma^N_{Y} (\chi_{h}(s))\nabla U(Y^{i,N}(\chi_{h}(s)))}{ 1 + h^{\alpha }  | \Sigma^N_{Y} (\chi_{h}(s))\nabla U(Y^{i,N}(\chi_{h}(s))) |}\bigg|^2
  \\  & 
  \leq  h^{2\alpha}\frac{| \Sigma^N_{Y} (\chi_{h}(s))\nabla U(Y^{i,N}(\chi_{h}(s)))|^{4}}{ (1 +  h^{\alpha }  |\Sigma^N_{Y} (\chi_{h}(s))\nabla U(Y^{i,N}(\chi_{h}(s)) )| )^2}.
\end{align*}
This results in 
\begin{align*}
     \E \int_{0}^{t \wedge \tau_{r}} (\text{III}) ds &\leq   C h^{2\alpha}\int_{0}^{t} \E \frac{| \Sigma^N_{Y} (\chi_{h}(s))\nabla U(Y^{i,N}(\chi_{h}(s)))|^{4}}{ (1 +  h^{\alpha }  |\Sigma^N_{Y} (\chi_{h}(s))\nabla U(Y^{i,N}(\chi_{h}(s)) )|)^2 } \de s 
     \\  &  
     \leq  Ch^{2\alpha}, \numberthis \label{eqn_ildo_3.62} 
\end{align*}
where we have used moment bounds from Lemma~\ref{mom_bo_dis_part_ildo}. Combining the estimate obtained in 
 (\ref{eqn_ildo_3.57}), (\ref{eqn_ildo_3.61}) and (\ref{eqn_ildo_3.62}), we ascertain
 \begin{align*}
    \mathbb{E}& \int_{0}^{t \wedge\tau_{R}} |\Sigma^{N}(s)\nabla U(X^{i,N}(s)) + F(Y^{i,N}(\chi_{h}(s)))|^2 \de s 
    \\  & \leq C h^{2\alpha} +  C (1 +  R)^2 \E \int_{0}^{t }  \bigg(|X^{i,N}(s\wedge \tau_{R}) - Y^{i,N}(s\wedge \tau_{R})|^{2} 
    \\  &  + |Y^{i,N}(s\wedge \tau_{R}) - Y^{i,N}(\chi_{h}(s\wedge \tau_{R}))|^{2}
    +   \frac{1}{N}\sum_{j =1}^{N} |Y^{j,N}(s\wedge \tau_{R}) - Y^{j,N}(\chi_{h}(s\wedge \tau_{R}))|^{2}
    \\  & 
     + \frac{1}{N}\sum\limits_{j=1}^{N} |X^{j,N}(s\wedge \tau_{R}) - Y^{j,N}(s \wedge \tau_{R})|^{2}
    \bigg)\de s.
    \end{align*}
This completes the proof. 
\end{proof}

\subsection{Proof of Theorem~\ref{thm:convergence_time_disc}}
\begin{proof}
We split the sample space as follows:
\begin{align}
    \E| X^{i,N}(t) - Y^{i,N}(t)|^{2} = \E(| X^{i,N}(t) - Y^{i,N}(t)|^{2} I_{\Omega_{R}}) + \E(| X^{i,N}(t) - Y^{i,N}(t)|^{2} I_{\Lambda_{R}}).
\end{align}
As is clear, we have the following bound:
\begin{align*}
     \E(| X^{i,N}(t) - Y^{i,N}(t)| I_{\Omega_{R}})^{2} \leq \E|X^{i,N}(t\wedge \tau_{R}) &- Y^{i,N}(t\wedge \tau_{R})|^{2}.
\end{align*}
Using Ito's formula, we have
\begin{align}
    |X^{i,N}(t\wedge \tau_{R}) &- Y^{i,N}(t\wedge \tau_{R})|^{2} = |X^{i,N}(0) - Y^{i,N}(0)|^{2} 
    \nonumber \\  & 
    - 2\int_{0}^{t\wedge \tau_{R}} \big\la(X^{i,N}(s) - Y^{i,N}(s)) \cdot (\Sigma(\cE^{N}(s))\nabla U(X^{i,N}(s)) + F(Y^{i,N}(\chi_{h}(s))))\big\ra \de s
     \nonumber \\  & 
     + \int_{0}^{t\wedge \tau_{R}} \frac{2}{\beta N}\text{trace}\big( [Q^{N}(s)
     - Q^{N}_{Y}(\chi_{h}(s))][Q^{N}(s)
     - Q^{N}_{Y}(\chi_{h}(s))]^{\top}\big) \de s
     \nonumber \\  &
      + 2\int_{0}^{t\wedge \tau_{R}}\sqrt{\frac{2}{\beta N}}\bigg\la(X^{i,N}(s) - Y^{i,N}(s)) \cdot  \Big(Q^{N}(s) - Q^{N}_{Y}(\chi_{h}(s)) \Big) \de  W^{i}(s)\bigg\ra. \label{eldo_eqn_4.58}
\end{align}
Due to Doob's optional stopping theorem, we have
\begin{align}
\E \bigg(\int_{0}^{t\wedge \tau_{R}}\sqrt{\frac{2}{\beta(s)}}\bigg\la(X^{i,N}(s) - Y^{i,N}(s)) \cdot  \Big(Q^{N}(s) - Q^{N}_{Y}(\chi_{h}(s)) \Big) \de W^{i}(s)\bigg\ra\bigg) = 0.  \label{eldo_eqn_3.69}
\end{align}
Note that the Doob's theorem can be applied due to the fact that $t \wedge \tau_{R}$ is bounded stopping time, and moments are bounded (see Lemma~\ref{mom_bo_dis_part_ildo}).  
We have already dealt with expectation of second term on right hand side of (\ref{eldo_eqn_4.58}) in Lemma~\ref{intermediate_lemma}.  For the third term on the right hand side of (\ref{eldo_eqn_4.58}), we utilize (\ref{3.56_ildo_eqn}) and (\ref{3.58_ildo_eqn}) to arrive at the following estimate:
\begin{align*}
    \E &\bigg(\int_{0}^{t\wedge \tau_{R}} \frac{2}{\beta N}\text{trace}\big( [Q^{N}(s)
     - Q^{N}_{Y}(\chi_{h}(s))][Q^{N}(s)
     - Q^{N}_{Y}(\chi_{h}(s))]^{\top}\big) \de s \bigg)
     \\  &
     \leq C\E \int_{0}^{t\wedge \tau_{R}} \frac{1}{N} \sum_{j=1}^{N} | X^{j,N}(s) -  Y^{j,N}(\chi_{h}(s))|^{2} \de s 
     \\  & 
      \leq C\E \int_{0}^{t\wedge \tau_{R}} \frac{1}{N} \sum_{j=1}^{N} | X^{j,N}(s) -  Y^{j,N}(s)|^{2} \de s  +   C\E \int_{0}^{t\wedge \tau_{R}} \frac{1}{N} \sum_{j=1}^{N} | Y^{j,N}(s) -  Y^{j,N}(\chi_{h}(s))|^{2} \de s 
     \\   &
     \leq C \E \int_{0}^{t} \frac{1}{N} \sum_{j=1}^{N} | X^{j,N}(s \wedge \tau_{R}) -  Y^{j,N}(s \wedge \tau_{R})|^{2} \de s
      \\  & \quad  +  C\E \int_{0}^{t} \frac{1}{N} \sum_{j=1}^{N} | Y^{j,N}(s \wedge \tau_{R}) -  Y^{j,N}(\chi_{h}(s \wedge \tau_{R}))|^{2} \de s, \numberthis \label{ildo_eqn_3.70}
\end{align*}
where $C >0$ is independent of $h$ and $N$. Combining the results of  (\ref{eldo_eqn_3.69}), (\ref{ildo_eqn_3.70}) and Lemma~\ref{intermediate_lemma}, we get
\begin{align*}
    \E  |X^{i,N}(t\wedge \tau_{R}) &- Y^{i,N}(t\wedge \tau_{R})|^{2}  \leq |X^{i,N}(0) - Y^{i,N}(0)|^{2} 
    \\  & 
    \leq C h^{2\alpha} +  C (1 +  R)^2 \E \int_{0}^{t }  \bigg(|X^{i,N}(s\wedge \tau_{R}) - Y^{i,N}(s\wedge \tau_{R})|^{2} 
    \\  &  + |Y^{i,N}(s\wedge \tau_{R}) - Y^{i,N}(\chi_{h}(s\wedge \tau_{R}))|^{2}
    +   \frac{1}{N}\sum_{j =1}^{N} |Y^{j,N}(s\wedge \tau_{R}) - Y^{j,N}(\chi_{h}(s\wedge \tau_{R}))|^{2}
    \\  & 
     + \frac{1}{N}\sum\limits_{j=1}^{N} |X^{j,N}(s\wedge \tau_{R}) - Y^{j,N}(s \wedge \tau_{R})|^{2}
    \bigg)\de s.
\end{align*}
Using Lemma~\ref{cont_step_error_lemma}, we ascertain
\begin{align*}
     \E  |X^{i,N}&(t\wedge \tau_{R}) - Y^{i,N}(t\wedge \tau_{R})|^{2}  \leq Ch + C(1+R)^2 h  
     \\  & 
     + C(1+ R)^2\int_{0}^{t} \bigg(\E|X^{i,N}(s\wedge \tau_{R}) - Y^{i,N}(s\wedge \tau_{R})|^{2} + \frac{1}{N}\sum\limits_{j=1}^{N} \E|X^{j,N}(s\wedge \tau_{R}) - Y^{j,N}(s \wedge \tau_{R})|^{2}\bigg) \de s,
\end{align*}
which on taking supremum over $j =1 ,\dots, N$, we get
\begin{align*}
     \sup_{i=1,\dots,N}\E  |X^{i,N}&(t\wedge \tau_{R}) - Y^{i,N}(t\wedge \tau_{R})|^{2}  \leq Ch^{2\alpha} + C(1+R)^2 h  
     \\  & 
     + C(1+ R)^2\int_{0}^{t} \sup_{i=1,\dots,N}\E|X^{i,N}(s\wedge \tau_{R}) - Y^{i,N}(s\wedge \tau_{R})|^{2}\de s.
\end{align*}
Applying Gronwall's lemma, we have
\begin{align*}
     \E  |X^{i,N}&(t\wedge \tau_{R}) - Y^{i,N}(t\wedge \tau_{R})|^{2} \leq  C ( h^{2\alpha} + (1+ R)^2 h) e^{C (1+ R)^2}.
\end{align*}
This implies 
\begin{align*}
     \E \big( |X^{i,N}(t) - Y^{i,N}(t)|^{2} I_{\Omega_{R}}\big) \leq  C ( h^{2\alpha} + (1+ R)^2 h) e^{C (1+ R)^2},
\end{align*}
where $C >0$ is independent of $h$ and $N$. 

Using H\"older's inequality, Lemma~\ref{mom_bo_dis_part_ildo} and Markov's inequality, we have
\begin{align*}
    \E \big( |X^{i,N}(t) - Y^{i,N}(t)|^{2} I_{\Lambda_{R}}\big) 
    & \leq C\big(\E  |X^{i,N}(t)|^{4} + \E | Y^{i,N}(t)|^{4}\big)^{1/2} \mathbb{P}(\Lambda_{R}(\omega))  \\  & 
    \leq  C \frac{ \frac{1}{N}\sum_{i=1}^{N}\mathbb{E}|X^{i,N}(t)|^{2}   }{ R}
    \leq \frac{C}{R},
 \end{align*}
 where $C >0 $ is independent of $h$, $N$ and $R$. With appropriate choice of $R $ depending on $h$, we get the desired result. 
\end{proof}

\section*{Acknowledgments}
This work was supported by the Wallenberg AI, Autonomous Systems and Software Program (WASP) funded by the Knut and Alice Wallenberg Foundation.

\bibliography{}
\bibliographystyle{alpha}
\end{document}